\documentclass[12pt,letterpaper,reqno]{amsart}
\usepackage[T1]{fontenc}
\usepackage{lmodern}
\usepackage{amssymb,amsmath,amsthm,mathrsfs}
\usepackage{enumitem}
\usepackage[margin=1in]{geometry}
\usepackage[expansion=false]{microtype}
\usepackage[dvipsnames]{xcolor}
\usepackage{hyperref}
\usepackage{bookmark}
\usepackage{bm}
\usepackage{doi}
\hypersetup{
	pdftitle={The modified Cartan conjecture},
	pdfauthor={Alexandre Eremenko; Zongben Xu; Teng Zhang},
	pdfsubject={Normal families, the five-function radius, and infinitesimal hyperbolicity},
	pdfkeywords={Cartan conjecture; normal families; Wronskians; holomorphic curves; Kobayashi--Royden pseudometric},
	colorlinks=true,linkcolor=RoyalBlue,citecolor=ForestGreen!65!black,urlcolor=BrickRed}
\newtheorem{thm}{Theorem}[section]
\newtheorem{lem}[thm]{Lemma}
\newtheorem{prop}[thm]{Proposition}
\newtheorem{cor}[thm]{Corollary}

\newtheorem*{cartanthm}{Cartan's theorem}
\newtheorem*{modifiedconjecture}{Modified Cartan conjecture}
\theoremstyle{definition}

\newtheorem{defin}[thm]{Definition}
\numberwithin{equation}{section}
\newcommand{\D}{\mathbb D}
\newcommand{\C}{\mathbb C}
\newcommand{\PP}{\mathbb P}
\newcommand{\OO}{\mathcal O}
\newcommand{\dd}{\,d}
\newcommand{\norm}[2]{\left\lVert #1\right\rVert_{#2}}
\newcommand{\mean}[2]{\left\langle #1\right\rangle_{#2}}
\DeclareMathOperator{\Rea}{Re}
\DeclareMathOperator{\Ima}{Im}
\DeclareMathOperator{\diam}{diam}
\setlist[enumerate]{label=\textup{(\roman*)},leftmargin=2.2em,itemsep=2pt,topsep=4pt}
\begin{document}
	\title{The modified Cartan conjecture}
	\author[A.~Eremenko]{Alexandre Eremenko}
	\address{Mathematics Department, Purdue University,
		West Lafayette, IN 47907, USA}
	\email{eremenko@purdue.edu}
	\author[Z.~Xu]{Zongben Xu}
	\address{School of Mathematics and Statistics, Xi'an Jiaotong University,
		Xi'an 710049, P.~R.~China}
	\email{zbxu@mail.xjtu.edu.cn}
	\author[T.~Zhang]{Teng Zhang}
	\address{School of Mathematics and Statistics, Xi'an Jiaotong University,
		Xi'an 710049, P.~R.~China}
	\email{teng.zhang@stu.xjtu.edu.cn}
	\subjclass[2020]{Primary 32Q45; Secondary 30D35, 32D45}
	\keywords{holomorphic curve, Bloch's Principle, Nevanlinna theory, Wronskian,
         Kobayashi--Royden pseudometric, Cartan's 1928 conjecture}
	\date{September 14, 2026}
	\begin{abstract}
	In 1928 H. Cartan stated a conjecture about holomorphic curves in
         $\PP^n$
		parametrized by the unit disc, omitting $n+2$ hyperplanes in general position.
		He proved it for $n=2$. In 1996 the first-named author constructed
		counterexamples for all $n\geq 3$, and proposed a modified form of Cartan's
		conjecture which he proved for $n=3$.
		In this paper we prove this modified conjecture in all dimensions.
		As a byproduct we prove a quantitative version of the classical theorem that
		analytic functions are linearly dependent iff their Wronskian determinant is
		zero. We interpret our results in
		terms of the Kobayashi metric on the complement of $n+2$
hyperplanes, and on smooth algebraic varieties in the algebraic torus. We also provide a Lean 4 formalization of the main results.
	\end{abstract}
	\maketitle
	
	\section{Introduction}
	
	Emile Borel \cite{Bor97} proved that the identity
	$$f_1+\ldots+f_p=0$$
	for zero-free entire functions can hold only for trivial reasons:
	when the set $\{ f_1,\ldots,f_p\}$ can be partitioned into disjoint groups
	such that within each group all functions are constant multiples
	of one of them,
	and the sum of all functions in each group is zero.
	It follows from the last property that
	each group contains at least two elements.
	
	The case $p=3$ is equivalent to Picard's theorem \cite{Pic79}. Indeed, if
	$f_1,f_2,f_3$ are zero-free entire functions with zero sum, then
	$f=-f_1/f_3$ is entire and omits $0$ and $1$. Conversely, every
	entire function $f$ omitting $0$ and $1$ gives the zero-free triple
	$(-f,f-1,1)$ with zero sum. Each group in Borel's theorem contains
	at least two elements, so there is only one group when $p=3$.
	Thus all $f_j$ are proportional to one of them, so $f$ is constant.
	
	So Borel's theorem is a generalization of Picard's theorem to
	holomorphic curves in projective space, parametrized by the complex plane.
	
	Andre Bloch was promoting the philosophical principle that ``Nihil est
	in infinito quod non prius fuerit in finito''. When applied to function
	theory this is usually interpreted as {\em every condition which implies
		that a meromorphic function in the plane is constant, when imposed on
		a family of meromorphic functions in an arbitrary region, must imply
		normality (pre-compactness) of this family.} The prototype is Montel's
	normality criterion \cite{Mon27} which says that a family of holomorphic functions
	omitting $0$ and $1$ is normal in the spherical metric.
	For a general discussion of Bloch's Principle
	we refer to the original exposition \cite{Blo26a} and to
	the modern survey  \cite{Ber06}. For extensions to holomorphic curves we mention
	\cite{Yam26}.
	
	Bloch \cite{Blo26} applied his principle to Borel's theorem, and obtained some
	partial results, some of them only heuristic, without complete proofs.
	Then this question was studied by
	Henri Cartan in his thesis \cite{Car28}. To state Cartan's result we use the
	following notation and definition.
	
	We write $D(r)=\{z\in\C:|z|<r\}$, $\D=D(1)$, and
	$\C^*=\C\setminus\{0\}$.
	By $\OO(U)$ we denote the ring of holomorphic functions on an open set $U$
	with the topology of uniform convergence on compact subsets.
	A holomorphic function without zeros will be called a \emph{unit},
	in accordance with the terminology of ring theory.
	For $p\geq 3$, we consider sequences of $p$-tuples of
	units $(f_{1,n},\ldots,f_{p,n})_{n=1}^\infty$
	on $\D$ such that
	\begin{equation}\label{eq:zero-sum}
		f_{1,n}+\cdots+f_{p,n}=0,\quad n=1,2,\ldots
	\end{equation}
	The appropriate counterpart for a group of proportional
	functions is the following
	notion, introduced in \cite{Ere96b}.
	
	\begin{defin}\label{def:cclass}
		{\em Let $\Omega\subset\D$ be a region. A subset
		$I\subseteq\{1,\ldots,p\}$ is called a {\em $C$-class}
on $\Omega$ if there is an
		index $k\in I$ such that the quotients $f_{j,n}/f_{k,n}$, $j\in I$,
		are locally uniformly bounded on $\Omega$ and
		\begin{equation}\label{cclass}
			\sum_{j\in I}\frac{f_{j,n}}{f_{k,n}}\longrightarrow0
			\quad\text{locally uniformly on }\Omega,\quad\mbox{as}\quad n\to\infty.
		\end{equation}
		Such an index $k$ is called a \emph{dominant index} of the $C$-class.}
	\end{defin}
	
	The dominant index is fixed (independent of $n$).
	Every $C$-class has cardinality at least two, since for a singleton
	$I=\{k\}$ the normalized sum in \eqref{cclass} is identically equal to $1$.

	In \cite[pp.~312--318]{Car28},
	Cartan proved:
	
	\begin{cartanthm}
		Under the assumption \eqref{eq:zero-sum}, after selection of a subsequence,
		either all indices form a single $C$-class or there are at least two disjoint
		$C$-classes on $\D$.
	\end{cartanthm}
	
	In particular, a partition into $C$-classes
	exists on the whole disk for $p\leq 4$.
	His conjecture that such a partition exists
	for every $p\geq 3$ was
	disproved in \cite{Ere96a}, where  counterexamples were constructed
	for all $p\ge5$. Then the following
	modification was proposed in
	\cite{Ere96b,Ere15}:
	
	\begin{modifiedconjecture}
		After passage to a subsequence, every system \eqref{eq:zero-sum}
		admits a partition into $C$-classes on a concentric disk of positive
		radius depending only on $p$.
	\end{modifiedconjecture}
	
	This assertion was proved for $p=5$ 
	in \cite{Ere96b}.
	More recently, Yamanoi \cite[Theorem~1.7]{Yam26} obtained, after passage
	to a subsequence, disjoint nonempty $C$-classes $I_1,\ldots,I_q$ on the
	whole disk, together with control of the remaining indices away from
	exceptional sets of arbitrarily small total radius. More precisely, set
$
		I=I_1\sqcup\cdots\sqcup I_q.
$
	Then, for every $j\notin I$, every $0<s<1$, every $\gamma>0$, and every
	$\varepsilon>0$, one has, for all sufficiently large $n$,
	\[
		\frac{|f_{j,n}(z)|}
		{\left(\sum_{k\in I}|f_{k,n}(z)|^2\right)^{1/2}}
		<\varepsilon
	\]
	for $\gamma$-almost all $z\in D(s)$, where ``$\gamma$-almost all'' means
	outside a set covered by at most countably many closed disks whose sum of
	radii is less than $\gamma$.

 This result does not imply that every index belongs to one of
	the $C$-classes. Our main theorem establishes the full partition of $\{1,\ldots,p\}$ into $C$-classes after a
	restriction to a fixed disk, and proves the Modified Conjecture.
	
	\begin{thm}[Partition theorem]\label{thm:main}
		For every integer $p\ge3$ there is $\varepsilon_p\in(0,1)$ such that
		every sequence of $p$-tuples of units on $\D$ satisfying
		\eqref{eq:zero-sum} has a subsequence for which $\{1,\ldots,p\}$ is a
		disjoint union of $C$-classes on $D(\varepsilon_p)$.
	\end{thm}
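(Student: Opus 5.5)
We argue by induction on $p$, using Cartan's theorem as the engine and a quantitative Wronskian estimate to control the shrinking of the disk at each step. The abstract advertises a quantitative version of the fact that analytic functions are linearly dependent iff their Wronskian vanishes. We plan to prove the following form of it: if $g_1,\ldots,g_m$ are holomorphic on $D(r)$, bounded by $1$, and normalized by $g_1\equiv 1$ on the relevant scale, and if their Wronskian is uniformly small on $D(r)$, then on $D(\delta_m r)$ some nontrivial linear combination $\sum c_jg_j$ with $\max|c_j|=1$ is uniformly small. Here $\delta_m>0$ depends only on $m$. This lemma is what replaces the equality case ``Wronskian $\equiv0$'' in Cartan's original argument. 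It is also the step that allows a fixed, $p$-dependent radius $\varepsilon_p$ rather than an $n$-dependent one.

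\textbf{Step 1: reduction.} Normalize by dividing the relation \eqref{eq:zero-sum} by $f_{p,n}$, so that $g_{j,n}=f_{j,n}/f_{p,n}$ are units with $\sum_{j<p}g_{j,n}=-1$. We then apply Cartan's theorem (after a subsequence) on $\D$. If all indices form one $C$-class, we are done. Otherwise there are at least two disjoint $C$-classes $I_1,I_2$ on $\D$. Subtracting the relations within these classes yields a new relation among fewer ``effective'' terms. Inside a $C$-class with dominant index $k$, the quotients $f_{j,n}/f_{k,n}$ are locally bounded units, so by Montel/Hurwitz they converge either to units or to $0$. After a subsequence we may therefore group them.

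\textbf{Step 2: induction.} Consider the remaining index set $J=\{1,\ldots,p\}\setminus(I_1\cup\cdots\cup I_q)$, where the $I_i$ are the maximal collection of disjoint $C$-classes on $\D$. The relation \eqref{eq:zero-sum} gives $\sum_{j\in J}f_{j,n}=-\sum_i\sum_{j\in I_i}f_{j,n}$. Each class sum on the right equals $f_{k_i,n}\,\varepsilon_{i,n}$ with $\varepsilon_{i,n}\to0$ locally uniformly, but $\varepsilon_{i,n}$ is not a unit. The key device is to write each class sum as a Wronskian ratio. By the Cartan--Nevanlinna identity, $\sum_{j\in I}f_j$ divided by an appropriate Wronskian is a combination of logarithmic-derivative quotients. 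Smallness of the class sum then becomes smallness of a Wronskian of the units involved. The quantitative Wronskian lemma then produces, on a disk $D(\delta r)$, a near-linear relation among the indices of $J$ together with the dominant indices. This relation has fewer than $p$ terms of units (up to a small error that we absorb). The induction hypothesis, in a version stable under small perturbations, applies on $D(\varepsilon_{p-1})$ and refines $J$ into $C$-classes. Hence $\varepsilon_p=\delta_p\,\varepsilon_{p-1}$ with $\varepsilon_3$ coming from Montel/Schottky for $p=3$. Because perturbations enter, the induction statement should be formulated for ``approximate'' zero sums, $\sum f_{j,n}=o(\max_j|f_{j,n}|)$. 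Cartan's theorem and Borel-type Nevanlinna estimates (the Cartan second main theorem on disks, with a logarithmic derivative lemma giving bounds on $D(r)$ for $r<1$) extend to that setting.

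\textbf{Main obstacle.} The hardest step is the quantitative Wronskian lemma with constants uniform in the family. A small Wronskian means the vectors $(g_j,g_j',\ldots,g_j^{(m-1)})$ are nearly dependent at each point, but the almost-dependence coefficients could vary with $z$. Forcing constant coefficients on a fixed smaller disk requires a compactness argument. We try to argue by contradiction: take a normalized sequence with Wronskians tending to $0$ and no uniformly small combination. After passing to limits (bounded families are normal), the limit functions have identically zero Wronskian, hence are linearly dependent on each subdisk. This gives a genuine linear relation, which contradicts the assumption after renormalization. The delicate case is when the limits degenerate, i.e.\ some $g_j\to0$. There we rescale by the largest coefficient and use induction on $m$. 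Controlling the degenerate case uniformly is exactly what fixes $\delta_m$.
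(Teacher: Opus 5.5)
\textbf{There is a genuine gap, and it sits in Step~2, which is where the whole difficulty of the theorem lies.}

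After Cartan's theorem, what you actually have is $\sum_{j\in J}f_{j,n}=-\sum_i\epsilon_{i,n}f_{k_i,n}$, where $\epsilon_{i,n}=\sigma_{i,n}/f_{k_i,n}\to0$ and $\sigma_{i,n}$ is the sum over $I_i$.

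\begin{enumerate}
\item The coefficients $\epsilon_{i,n}$ are neither constants nor units; they may vanish.
\item $|\epsilon_{i,n}f_{k_i,n}|$ need not be small compared with $\max_{j\in J}|f_{j,n}|$, since $f_{k_i,n}$ may dominate every term indexed by $J$.
\item So this is not an approximate zero sum of fewer units. Your perturbation-stable induction hypothesis, and the claimed extension of Cartan's theorem to it, are asserted rather than proved; together they are essentially the theorem itself.
\item The Wronskian device does not bridge the gap. Smallness of $\sum_{j\in I}f_{j,n}/f_{k,n}$ makes $W(f_{j,n}/f_{k,n}:j\in I)$ small, but any Wronskian lemma then only returns a near-relation among the members of $I$, which you already had. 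Nothing links $J$ to the dominant indices.
\item The target structure is also wrong. A singleton is never a $C$-class, so $J$ cannot in general be refined into $C$-classes by itself. In the extremal five-term example of Section~\ref{sec:sharp-five}, $J$ is the constant $-1$ alone, and it must be absorbed into one of Cartan's two classes.
\end{enumerate}

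What has to be proved is that leftover terms become negligible relative to some class on a fixed smaller disk. Eremenko's counterexamples show this fails on $\D$ itself, so a quantitative growth argument is unavoidable, and your plan contains none.

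\textbf{The missing ingredient is Proposition~\ref{thm:absorption}:} if $A_{i,n}$ are units, $a_{i,n}/A_{i,n}\to0$ and $\sum_ia_{i,n}\to s\not\equiv0$, then some $1/A_{i,n}\to0$ on $D(r_m)$.

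Your idea that a near-linear relation lets one drop a term and induct does occur in its proof, but only as the easy case. The substance is the complementary case $\Lambda_\eta(a_{1,n},\ldots,a_{m,n})\ge\lambda>0$. Put $u_{i,n}=\log|A_{i,n}|$. The paper then does the following.
\begin{itemize}
\item It picks a circle of controlled growth with $M=\mean{\max(0,u_{1,n},\ldots,u_{m,n})}{\rho}\to\infty$.
\item Replacing one column by $s_n$ and using the logarithmic derivative lemma, it proves $\mean{\log|W_n|}{\rho}\le\sum_iu_{i,n}(0)-M+o(M)$.
\item Via Jensen, it plays this against a lower bound at a point $w_n$ near $0$. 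That lower bound comes from applying the Wronskian estimate to $e^{-L_n}a_{i,n}(4\eta z)$, with $L_n=64\eta M+C_0\to\infty$.
\item The $\Lambda$ of these normalized functions is of order $e^{-L_n}$. Hence a \emph{rate} is essential, namely $\norm{W}{b}\ge c\Lambda_a^K$ from Proposition~\ref{prop:wronskian}. This yields $\log|W_n(w_n)|\ge-64K_m\eta M-O(1)$.
\item The contradiction is reached once $\eta_m\le1/(1024(K_m+m))$.
\end{itemize}
It is the exponent $K_m$, not a compactness radius, that fixes $\varepsilon_p$.

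Your compactness lemma gives no rate, so it cannot serve here. Its $\delta_m$ is also no real constraint: the degenerate case you worry about is harmless, because a limit $g_j\equiv0$ is itself a linear dependence, so the argument works on any smaller disk.

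With the absorption proposition in hand, the paper deduces Theorem~\ref{thm:main} without Cartan's theorem at all. It uses a dominance preorder on the nested disks $D(\sigma^k)$, $\sigma=r_{p-1}$.
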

	
	The statement is evidently conformally invariant:
	it holds not only for the disk $D(\varepsilon_p)$ but for every hyperbolic disk
	of the same hyperbolic radius.
	
	Certainly the whole unit disk can be
	covered by hyperbolic disks of fixed radius
	but this does not imply that the conclusion of the theorem
	holds in the whole unit disk:
	the reason is that a partition into $C$-classes for a given sequence of
	$p$-tuples is not unique,
	so one can have different partitions on overlapping disks. 
	
	For $p=5$ we obtain the optimal value of $\varepsilon_p$, and can replace
	disks by arbitrary regions of fixed hyperbolic diameter.
	We use the hyperbolic length element
	$2|dz|/(1-|z|^2)$, denote the hyperbolic distance by $d$,
	and define $\diam_\D\Omega=\sup_{z,w\in\Omega}d(z,w)$.
	
	\begin{thm}\label{thm:sharp-five}
		Let $\Omega\subset\D$ be a non-empty open set
		with $\diam_{\D}\Omega\leq \log3$.
		Every sequence of five-tuples of
		units on $\D$ whose sum is zero has a subsequence admitting a
		partition into $C$-classes on $\Omega$.
	\end{thm}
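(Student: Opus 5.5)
Theorem~\ref{thm:sharp-five} reduces to Cartan's theorem plus a sharp one-variable estimate for the remaining three-term configurations. Pass to a subsequence and apply Cartan's theorem on $\D$. If all five indices form one $C$-class, we are done. Otherwise there are at least two disjoint $C$-classes on $\D$, each of cardinality at least two, which leaves two cases: classes of sizes $(2,2)$ with one index left over, or $(2,3)$. The case $(2,3)$ is already a partition. So the only situation to handle is
\[
I_1=\{1,2\},\qquad I_2=\{3,4\},\qquad \text{index } 5 \text{ unassigned}.
\]
Cartan's theorem also allows classes that overlap in structure, for example a class of size $2$ alongside something else, but since disjoint classes of size $\geq2$ use at least four indices, this enumeration is exhaustive. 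In the $(2,2)$ case, normalize by $f_{5,n}$. Write $g_n=f_{1,n}/f_{5,n}$ and $h_n=f_{3,n}/f_{5,n}$, which are units, and set $a_n=1+f_{2,n}/f_{1,n}\to0$ and $b_n=1+f_{4,n}/f_{3,n}\to0$ locally uniformly. The zero-sum relation becomes
\[
a_n g_n + b_n h_n + 1 = 0 .
\]

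Next I would pass to a further subsequence and examine the behaviour of the units $g_n$ and $h_n$. For each, one of two things happens: $\log|g_n|$ stays bounded on compacta (giving normality of $g_n$ and $1/g_n$), or $g_n\to0$ or $g_n\to\infty$ locally uniformly. This dichotomy follows from Harnack's inequality applied to harmonic functions that are bounded on one side, or more simply from normality of zero-free families in the extended sense, since $\log|g_n|$ is harmonic.
\begin{enumerate}
\item If $g_n$ is locally bounded on some region, then $5$ can join $I_1$ with dominant index $5$: the quotients $f_{1,n}/f_{5,n}=g_n$ and $f_{2,n}/f_{5,n}=(a_n-1)g_n$ are bounded, and the sum $a_n g_n\to$ must be checked against $-1-b_nh_n$.
\item The real difficulty is that $a_ng_n+b_nh_n\equiv-1$ while both summands may be unbounded. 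This mirrors the counterexample mechanism of \cite{Ere96a}.
\end{enumerate}
The key step is to show that on any $\Omega$ with $\diam_\D\Omega\le\log 3$, after a subsequence, one of the following holds: $a_ng_n\to-1$ with $b_nh_n\to0$ (so $\{1,2,5\}$ is a $C$-class with dominant index $5$ and $I_2$ survives); the symmetric statement; or $|g_n/h_n|$ stays bounded above and below, in which case $\{1,2,3,4,5\}$ merges into one $C$-class with dominant index $1$. This has to be checked using the relations, but boundedness of ratios together with the vanishing sum gives it.

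The main obstacle is the quantitative estimate excluding the bad case. That case is the one in which $|a_ng_n|$ and $|b_nh_n|$ are both large and comparable on part of $\Omega$, while one of them dominates elsewhere, so that no single choice works on all of $\Omega$. I would treat $u_n=-a_ng_n$ as a holomorphic function with $u_n+(-b_nh_n)=1$ and extract hyperbolic information from the zero-free units $g_n$ and $h_n$ together with the smallness of $a_n$ and $b_n$. The plan is to use Schottky/Landau-type control: the functions $\log g_n$ and $\log h_n$ are single-valued on $\D$, and as $a_n,b_n\to0$, the locus where $|a_ng_n|\asymp1$ is forced to be close to the level sets of a limiting harmonic function of the form $\Rea(\alpha z)$, after a Möbius normalization. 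The constant $\log3$ should come out as the hyperbolic distance at which two such transition regions can be separated. Concretely, $\log3=d(0,1/2)$, which matches a Möbius/Harnack extremal problem: a positive harmonic function $v$ on $\D$ satisfies $v(z)/v(w)\le 3$ when $d(z,w)\le\log3$. I would try to prove that the ratio $\log|a_ng_n|/\log|b_nh_n|$, after normalization, is comparable to a positive harmonic function, and use the Harnack bound to conclude that on a region of diameter at most $\log3$ only one regime can occur. Sharpness would then be shown by adapting the example of \cite{Ere96a}.
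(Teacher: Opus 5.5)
Your opening reduction matches the paper's. Cartan's theorem leaves only two classes $\{1,2\},\{3,4\}$ with index $5$ unassigned, and dividing by $f_{5,n}$ gives $a_ng_n+b_nh_n=-1$.

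\textbf{The key-step target is false.} Take the paper's extremal sequence \eqref{eq:five-counterexample}: $f_1=A_n$, $f_2=G_n\circ\Phi-A_n$, $f_3=B_n$, $f_4=1-G_n\circ\Phi-B_n$, $f_5=-1$, on $\Omega=D(\epsilon)$ with $\epsilon<2-\sqrt3$. There your $-a_ng_n$ equals $G_n\circ\Phi$. It tends to $1$ at small positive reals and to $0$ at small negative reals. Also $g_n/h_n=A_n/B_n=\exp\bigl(2nz/(1-z^2)\bigr)$ is unbounded at small positive reals. So none of your three alternatives holds along any subsequence, yet the theorem applies on this $\Omega$. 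What is true, and suffices, is the weaker conclusion of Proposition~\ref{prop:sharp-two-absorption}: one of $1/g_n$, $1/h_n$ tends to $0$ locally uniformly on $\Omega$. Then, say, $\{1,2,5\}$ is a $C$-class with dominant index $1$, because $(f_1+f_2+f_5)/f_1=a_n+1/g_n\to0$.

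Several side claims are also wrong:
\begin{enumerate}
\item Zero-free families need not be normal ($e^{nz}$ on $\D$), so your dichotomy for $g_n$ fails.
\item In item (i), bounded $g_n$ gives $(f_1+f_2+f_5)/f_5=1+a_ng_n\to1$, so $5$ cannot join $I_1$.
\item $a_ng_n\to-1$ forces $g_n\to\infty$, so the dominant index in your first alternative must be $1$, not $5$.
\end{enumerate}

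\textbf{No workable mechanism is given, even for the correct target.} The obstruction is not that $|a_ng_n|$ and $|b_nh_n|$ are large and comparable. It is that $|g_n|$ stays bounded at points $\alpha_n$, and $|h_n|$ at points $\beta_n$, of a fixed compact subset of $\Omega$. Then $-a_ng_n$ is near $0$ at $\alpha_n$ and near $1$ at $\beta_n$.

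Your proposed tools do not reach this. The function $\log|a_ng_n|$ is not harmonic, since $a_n$ has zeros, and a ratio of such logarithms is not comparable to a positive harmonic function. One-point Harnack bounds are also too weak. Let $U_n$ be the harmonic extension of $\max(\log|g_n|,\log|h_n|)$ from a circle. Chaining Harnack's inequality for $U_n-\log|g_n|$, $U_n-\log|h_n|$ and $U_n$ through the midpoint of the geodesic gives the needed negativity only when $d_{\D}(\alpha_n,\beta_n)<\log2$.

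The paper instead bounds the derivative $(a_ng_n)'=-(b_nh_n)'$:
\begin{enumerate}
\item On a circle $|z|=\rho_n$ chosen by Lemma~\ref{lem:growth}, $\log|(a_ng_n)'|\le\min(\log|g_n|,\log|h_n|)+E_n$. Here $E_n$ has mean $O(\log M)$, with $M=U_n(0)$, by Lemma~\ref{lem:logderivative}.
\item The harmonic extension $H_n$ of this minimum satisfies $H_n\le-\varepsilon U_n+2C_0$ along the geodesic from $\alpha_n$ to $\beta_n$. This is the two-point Poisson kernel comparison of Lemma~\ref{lem:two-point-kernel} and Corollary~\ref{cor:geodesic-comparison}. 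It is exactly where $\log3$ enters; at the midpoint it reduces to $\frac1{2+c}+\frac1{2-c}\ge1$.
\item Hence $(a_ng_n)'\to0$ near the geodesic. Integrating along it contradicts the jump of $-a_ng_n$ from $0$ to $1$.
\end{enumerate}
You also need the reduction from $\diam_{\D}\Omega=\log3$ to compact subsets of hyperbolic diameter $<\log3$.
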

	
	The constant $\log3$ cannot be increased. Let $R_p$ denote the
	least upper bound of radii $\varepsilon$ of Euclidean disks centered
	at the origin for which
	the conclusion of Theorem~\ref{thm:main} holds with $\varepsilon_p=\varepsilon$. Then
	\begin{equation*}
		R_5=2-\sqrt3.
	\end{equation*}
	Indeed, the hyperbolic diameter of $D(R)$ is $4\operatorname{arctanh}R$;
	when $R=2-\sqrt{3}$ we obtain the hyperbolic diameter $\log 3$.
	
	
	A $p$-tuple of units satisfying \eqref{eq:zero-sum} defines a
	holomorphic curve in the hyperplane $\sum_{j=1}^p x_j=0$ of
	$\PP^{p-1}$, which is isomorphic to $\PP^{p-2}$.
	The curve omits the $p$ coordinate hyperplanes restricted to this
	hyperplane; these are in general position.
	
	The connection between geometry of hyperplane complements
	and holomorphic curves in them
	was studied by many authors
	beginning with Kiernan and
	Kobayashi \cite{KK73}; Lang \cite[Chapter~VIII]{Lan87} gives a
	systematic account. Our Theorem~\ref{thm:main} also has an application
	to the Kobayashi--Royden pseudometric on
	arbitrary smooth closed algebraic subvarieties of the algebraic torus
	$(\C^*)^N$ (that is, a product of copies of $\C^*$). For a complex
	manifold $Y$, we use the normalization
	\[
	\kappa_Y(x;v)=\inf\{t^{-1}:t>0,\ F\in\OO(\D,Y),\
	F(0)=x,\ F'(0)=tv\}
	\]
	of the Kobayashi--Royden pseudometric \cite{Roy71}.
	Here $\OO(\D,Y)$ denotes the set of holomorphic maps from $\D$ to $Y$.
	Yamanoi \cite[Theorem~1.5]{Yam26} proved that a zero direction
	for this pseudometric is realized by an entire curve when $Y$ is a smooth
	closed subvariety of an abelian variety. He
	\cite[p.~459 and Remark~14.4 on p.~604]{Yam26} notes that it is unknown
	whether the same conclusion holds for noncompact semi-abelian varieties. The following theorem
answers Yamanoi's question for smooth closed  algebraic subvarieties of the
algebraic
torus and specifies the entire curve.
	
	For $x=(x_1,\ldots,x_N)\in(\C^*)^N$ and
	$v=(v_1,\ldots,v_N)\in T_x(\C^*)^N$, put
	\begin{equation*}
		\lambda_j=\frac{v_j}{x_j},\qquad
		\gamma_{x,v}(\zeta)=
		(x_1e^{\lambda_1\zeta},\ldots,x_Ne^{\lambda_N\zeta}).
	\end{equation*}
	
	\begin{thm}\label{thm:torus-zero}
		Let $Y\subset(\C^*)^N$ be a smooth closed algebraic subvariety.
		For $x\in Y$ and $v\in T_xY$,
		\[
		\kappa_Y(x;v)=0
		\quad\Longleftrightarrow\quad
		\gamma_{x,v}(\C)\subset Y.
		\]
	\end{thm}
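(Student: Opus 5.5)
The direction ``$\Leftarrow$'' is immediate. If $\gamma_{x,v}(\C)\subset Y$, then for every $t>0$ the map $F_t(\zeta)=\gamma_{x,v}(t\zeta)$ lies in $\OO(\D,Y)$. It satisfies $F_t(0)=x$ and $F_t'(0)=tv$, because $\frac{d}{d\zeta}x_je^{\lambda_j\zeta}\big|_{\zeta=0}=x_j\lambda_j=v_j$. Letting $t\to\infty$ gives $\kappa_Y(x;v)=0$.

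For ``$\Rightarrow$'', suppose $\kappa_Y(x;v)=0$, and assume $v\neq 0$ (the case $v=0$ is trivial). Then there are $t_n\to\infty$ and maps $F_n\in\OO(\D,Y)$ with $F_n(0)=x$ and $F_n'(0)=t_nv$.

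The plan is to translate membership in $Y$ into zero-sum identities of units. Write $Y$ as the common zero set of Laurent polynomials $P_1,\dots,P_m$. Fix one polynomial $P=\sum_{\alpha\in A}c_\alpha X^\alpha$, with $c_\alpha\ne0$ and $A\subset\mathbb Z^N$ finite. Then $P(F_n)=0$ becomes the identity
\[
\sum_{\alpha\in A}c_\alpha F_n^\alpha=0 .
\]
Each term $c_\alpha F_n^\alpha$ is a unit on $\D$, since $F_n$ takes values in $(\C^*)^N$. If $|A|\ge 3$, Theorem~\ref{thm:main} applies with $p=|A|$. If $|A|\le 2$, the conclusion is direct: $P$ is a binomial, so $F_n^{\alpha-\beta}$ is constant, and the argument below still applies.

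After passing to a subsequence simultaneously for all $P_i$, we get on $D(\varepsilon)$, with $\varepsilon=\min\varepsilon_{|A|}$, a partition of each exponent set $A$ into $C$-classes. By definition, for a $C$-class $I$ with dominant index $k$, the ratios $g_{\alpha,n}=c_\alpha F_n^{\alpha}/(c_kF_n^{k})$ are locally bounded units, and $\sum_{\alpha\in I}g_{\alpha,n}\to0$.

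Now rescale. Put $G_n(\zeta)=F_n(\zeta/t_n)$ on $D(\varepsilon t_n)$. Then $G_n(0)=x$ and $G_n'(0)=v$, and $G_n$ is still a curve in $Y$. Since the $g_{\alpha,n}$ are locally bounded units on $D(\varepsilon)$ with $g_{\alpha,n}(0)$ fixed, they form a normal family. So $\log g_{\alpha,n}$ has locally bounded derivative, and the derivative of $g_{\alpha,n}\circ(\zeta/t_n)$ tends to $0$. Evaluating the logarithmic derivative at $0$ gives
\[
\frac{d}{d\zeta}\log\bigl(F_n^{\alpha-k}(\zeta/t_n)\bigr)\Big|_{\zeta=0}=\langle\alpha-k,\lambda\rangle .
\]
This quantity is fixed, while the left side is $t_n^{-1}$ times a bounded quantity. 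Hence $\langle \alpha-k,\lambda\rangle=0$ for all $\alpha$ in the same $C$-class. Here we use that $g_{\alpha,n}$ is normal and has a nonzero limit, which holds because $g_{\alpha,n}(0)=c_\alpha x^\alpha/(c_kx^k)$ is a fixed nonzero number; Hurwitz's theorem then gives a zero-free limit.

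Similarly, the limit of $\sum_{\alpha\in I}g_{\alpha,n}$ at $0$ gives $\sum_{\alpha\in I}c_\alpha x^\alpha=0$ for each class $I$. Combining these,
\[
P(\gamma_{x,v}(\zeta))=\sum_I\sum_{\alpha\in I}c_\alpha x^\alpha e^{\langle\alpha,\lambda\rangle\zeta}=\sum_I e^{\langle k_I,\lambda\rangle\zeta}\sum_{\alpha\in I}c_\alpha x^\alpha=0 .
\]
Doing this for every $P_i$ gives $\gamma_{x,v}(\C)\subset Y$.

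The main obstacle is the rescaling step: deducing $\langle\alpha-k,\lambda\rangle=0$ from the $C$-class structure. Local boundedness of the ratios on the fixed disk $D(\varepsilon)$ gives, by Cauchy estimates, a uniform bound on $(\log g_{\alpha,n})'(0)$. But the corresponding derivative of $F_n^{\alpha-k}$ at $0$ equals $t_n\langle\alpha-k,\lambda\rangle$, which is unbounded unless $\langle\alpha-k,\lambda\rangle=0$. I expect this to be straightforward once Theorem~\ref{thm:main} provides the partition on a disk of fixed size. That fixed size is exactly what Yamanoi's partial partition lacks, and smoothness of $Y$ plays no essential role beyond the setup.
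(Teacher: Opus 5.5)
Your proposal is correct and follows essentially the same route as the paper. You write each defining Laurent polynomial as a zero sum of units $c_\alpha F_n^\alpha$, apply Theorem~\ref{thm:main}, and evaluate the normalized class sums at $0$; Cauchy's estimate on the bounded quotients then gives $t_n\langle\alpha-\beta,\lambda\rangle=O(1)$, hence $\langle\alpha-\beta,\lambda\rangle=0$.

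Two remarks. The rescaling $G_n$ and the normality/Hurwitz argument are unnecessary, since $g_{\alpha,n}(0)$ is a fixed nonzero constant and the Cauchy bound on $g_{\alpha,n}'(0)$ already suffices. Also, the case $|A|=1$ is not a binomial; it simply cannot occur, because a monomial has no zeros on the torus.
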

	
	The zero directions form a closed algebraic subset of $TY$, and
	$\kappa_Y$ has a positive lower bound on every compact subset of its
	complement. These additional properties are proved in
	Section~\ref{sec:geometry}. For the complement of $p$ hyperplanes in
	$\mathbb P^{p-2}$ associated with \eqref{eq:zero-sum}, the theorem
	gives an explicit description in terms of
independently rescaled
	groups of coordinates whose sums are zero.
	
	The main analytic step in the proof of Theorem~\ref{thm:main} is the following estimate
	(Proposition~\ref{thm:absorption}).
	\vspace{.1in}

	{\em Let $A_{i,n}$ be units and $a_{i,n}$ holomorphic functions on
		$\D$, where $1\le i\le m$. Suppose that
		\[
		\frac{a_{i,n}}{A_{i,n}}\longrightarrow0\quad(1\le i\le m),
		\qquad s_n:=\sum_{i=1}^m a_{i,n}\longrightarrow s\not\equiv0
		\]
		locally uniformly on $\D$. Then there exist an index $i$ and a
		subsequence such that $1/A_{i,n}\longrightarrow0$ locally uniformly on $D(r_m)$,
		where $r_m>0$ depends only on $m$.}
	\vspace{.1in}

	The functions $a_{i,n}$, their sums $s_n$, and the limit $s$
	may have zeros. We prove the estimate by induction, using a
	quantitative estimate for Wronskians and logarithmic derivatives.
	
	The quantitative estimate of the Wronskian is another new ingredient which
	may have independent interest
	(Proposition~\ref{prop:wronskian}).
	It shows stability of the classical
	theorem that holomorphic functions are linearly dependent iff their Wronskian
	is identically equal to zero.
	We show that if the Wronskian is small, and the functions holomorphic
	in $\D$ are properly
	normalized, then the functions are almost linearly dependent,
	in the sense that there is a linear combination, whose coefficients
	have unit $\ell^2$-norm, which is small (in a smaller disk).
	
	To derive this, we use the tools employed by Cartan:
	the Bloch--Cartan estimate
	(Lemma~\ref{lem:cartan-circle}), the lemma on the logarithmic derivative
	(Lemma~\ref{lem:logderivative}), the Borel--Nevanlinna growth lemma
	(Lemma~\ref{lem:growth}), Harnack's inequality
	(Lemma~\ref{lem:envelope}), and Jensen's inequality
	(Lemma~\ref{lem:poisson-mean}).
	
	We mention that there is an alternative method
of proving Proposition~\ref{prop:wronskian}
based on the recent Universal
Pl\"ucker-coordinate theorem of Karp and Purbhoo~\cite{KP26}, but we choose
an exposition in the classical spirit. 
	
	Lang notes in \cite[Chapter~VIII, p.~225]{Lan87} that he was unable
	to simplify Cartan's arguments. Theorem~\ref{thm:main} does not imply
	Cartan's result, because its conclusion only applies to a smaller disk.
	
In Section~\ref{sec:wronskian} we establish the analytic estimates.
	In Section~\ref{sec:absorption} we prove Proposition~\ref{thm:absorption}, and
	in Section~\ref{sec:partition} deduce Theorem~\ref{thm:main}.
	Section~\ref{sec:sharp-five} contains the sharp five-function result, and
	Section~\ref{sec:geometry} treats the geometric applications.
	
	Throughout the paper,
	\[
	\norm{h}{r}=\max_{|z|\le r}|h(z)|,\qquad
	\mean{u}{r}=\frac1{2\pi}\int_0^{2\pi}u(re^{i\theta})\dd\theta,
	\qquad m(r,h)=\mean{\log^+|h|}{r},
	\]
	where $\log^+t=\max\{0,\log t\}$ and
	$\log^-t=\max\{0,-\log t\}$.
	
	We use $\|f\|_{L^p}$ for the $L^p$-norm of a scalar-valued function
	$f$, with the domain and measure specified when used. For $\bm{a}=(a_1,\ldots,a_n)\in\C^n$, we use
	$\|\bm{a}\|_{\ell^p}$ to denote its $\ell^p$-norm; in particular,
	$\|\bm{a}\|_{\ell^2}$ is its Euclidean norm.
	
	All limits of holomorphic functions are
	locally uniform unless stated otherwise. Subsequences are relabelled
	by $n$. Constants denoted by $C$ may change from line to line, with
	their fixed parameters indicated when needed.

    A Lean~4 formalization of the main results and supporting lemmas
of this paper, including Theorems~\ref{thm:main},
\ref{thm:sharp-five}, and~\ref{thm:torus-zero},
the sharp identity $R_5=2-\sqrt{3}$, and the geometric applications,
is provided in~\cite{Zha26}.

  \medskip
\noindent\textbf{Acknowledgments and AI tools disclosure. } Teng Zhang is supported by the China Scholarship Council, the Young Elite Scientists Sponsorship Program for PhD Students of the China Association for Science and Technology, and the Fundamental Research Funds for the Central Universities at Xi'an Jiaotong University (Grant No.~xzy022024045).

ChatGPT was used for English-language editing, proofreading, grammatical corrections, and as an exploratory tool for discussing possible approaches to selected parts of this paper. The mathematical arguments and proofs in the original manuscript were independently developed, checked, and written by the authors. 

The Lean~4 formalization in~\cite{Zha26} was generated using
OpenAI's Codex and checked by the Lean kernel.
	
	\section{Quantitative analytic estimates}\label{sec:wronskian}
	
	\medskip\noindent\textbf{Wronskians.}
	
	For holomorphic functions $g_1,\ldots,g_m$, write
	\[
	W(g_1,\ldots,g_m)=\det(g_j^{(i-1)})_{i,j=1}^m.
	\]
	The Wronskian vanishes identically precisely when the functions are
	linearly dependent over the constants; see \cite[Theorem~1]{BD10}.
	The main result of this section is a quantitative estimate of the
	Wronskian from below.
	
	We first state a lower estimate of
	Bloch--Cartan type; compare Cartan's lemma in
	\cite[Th\'eor\`eme~III, pp.~273--276]{Car28}, or \cite[Chap. VIII, \S3]{Lan87}.
	The proof below gives the precise power dependence needed here.
	
	\begin{lem}\label{lem:cartan-circle}
		Fix $0<a<b<c<1$. There is $\gamma>0$, depending only on $a,b,c$,
		such that every $F\in\OO(\D)$ with $|F|\le1$ and
		$\norm{F}{a}\ge t$, where $0<t\le1$, admits a radius $\rho\in(b,c)$
		for which
		\begin{equation}\label{eq:cartan-circle}
			\min_{|z|=\rho}|F(z)|\ge t^\gamma.
		\end{equation}
	\end{lem}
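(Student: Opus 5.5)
We use Jensen's formula to bound the number of zeros of $F$ in a disk, and then Cartan's lemma on small values of polynomials (Boutroux--Cartan) to control $\log|F|$ away from those zeros. Pick $z_0$ with $|z_0|\le a$ and $|F(z_0)|\ge t$. Fix an intermediate radius $c'\in(c,1)$, for instance $c'=(c+1)/2$.

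\textbf{Step 1: counting zeros.} Compose $F$ with the disk automorphism $\phi(\zeta)=(\zeta+z_0)/(1+\bar z_0\zeta)$. The disk $D(c')$ is contained in $\phi(D(R))$ for some $R<1$ depending only on $a$ and $c'$. Apply Jensen's formula to $F\circ\phi$ on $D(R')$ with $R<R'<1$. Since $|F|\le1$ and $|F(z_0)|\ge t$, this shows that the number $N$ of zeros of $F$ in $D(c')$ satisfies $N\le C_1\log(1/t)$, with $C_1$ depending only on $a,c$.

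\textbf{Step 2: factoring.} Write $F=PG$ on $D(c')$, where $P(z)=\prod_{k=1}^N (z-w_k)$ is the monic polynomial with the zeros $w_k$ of $F$ in $D(c')$. Then $G$ is zero-free there, and $\log|G|$ is harmonic on $D(c')$.

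\emph{Bound on $G$ from above.} On $|z|=c'$ we have $|P|\le 2^N$. The maximum principle then gives $\log|G|\le N\log 2$ on $D(c')$.

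\emph{Bound on $G$ from below.} At $z_0$ we have $|P(z_0)|\le 2^N$, so $\log|G(z_0)|\ge \log t-N\log2$. Apply Harnack's inequality to the nonnegative harmonic function $N\log2-\log|G|$ on $D(c')$. On $D(c)$, with a Harnack constant $K$ depending only on $a,c$, this gives
\[
\log|G|\ \ge\ -K\bigl(\log(1/t)+2N\log 2\bigr)\ \ge\ -C_2\log(1/t).
\]

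\textbf{Step 3: choosing the circle (the main obstacle).} The difficulty is to find a full circle $|z|=\rho$ with $\rho\in(b,c)$ on which $|P|$ is not too small, with an exponent linear in $\log(1/t)$.

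I would use Cartan's lemma with $H=(c-b)/(4e)$. Outside at most $N$ disks whose radii sum to at most $2eH=(c-b)/2$, one has $|P(z)|\ge H^N$. The radial projections of these disks cover a subset of $(b,c)$ of total length at most $c-b$ times a factor $<1$ (adjust $H$ smaller if needed). Hence some $\rho\in(b,c)$ has its circle $|z|=\rho$ disjoint from all the exceptional disks.

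An alternative that avoids Cartan's lemma is an elementary pigeonhole argument. The $N$ values $|w_k|$ cut $(b,c)$ into at most $N+1$ gaps, so there is a radius $\rho$ at distance $\ge (c-b)/(2(N+1))$ from every $|w_k|$. On that circle, $|P|\ge \bigl((c-b)/(2N+2)\bigr)^N$, which is $\exp(-O(N\log N))$. This loses a $\log\log(1/t)$ factor, which is why Cartan's lemma (or a more careful averaging) is needed for a pure power bound.

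\textbf{Step 4: conclusion.} On the chosen circle,
\[
\log|F|=\log|P|+\log|G|\ \ge\ -N\log(1/H)-C_2\log(1/t)\ \ge\ -\gamma\log(1/t),
\]
with $\gamma=C_1\log(1/H)+C_2$ depending only on $a,b,c$. This is \eqref{eq:cartan-circle}.

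The case $t=1$ is trivial: then $F$ is a unimodular constant, $N=0$, and the bound $\min_{|z|=\rho}|F(z)|\ge 1=t^\gamma$ holds. For $t$ close to $1$ the estimates above also hold, because every bound is proportional to $\log(1/t)$.
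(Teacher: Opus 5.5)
Your Step 2 upper bound for $G$ is wrong as written, though it is easy to repair. Since $|G|=|F|/|P|$, bounding $|G|$ on the circle requires a \emph{lower} bound for $|P|$ there. The bound $|P|\le 2^N$ goes the wrong way, and zeros of $F$ just inside $|z|=c'$ make $|P|$ arbitrarily small on that circle.

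In fact the claimed inequality $\log|G|\le N\log 2$ is false. If $c'>1/\sqrt2$, take $F(z)=\bigl((z-w)/(1-wz)\bigr)^N$ with $1/\sqrt2<w<c'$. Then $G(z)=(1-wz)^{-N}$, and $|G(w)|=(1-w^2)^{-N}>2^N$.

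Here is the repair. $G=F/P$ is holomorphic on all of $\D$. On $|z|=r\in(c',1)$ you have $|P|\ge (r-c')^N$, so the maximum principle and $r\uparrow 1$ give $\log|G|\le N\log\frac{1}{1-c'}$ on $\D$. Alternatively, divide by Blaschke factors adapted to the circle of radius $c'$. These have modulus one there and give $|G|\le 1$ directly; this is what the paper does on a zero-free circle $|z|=T$. Harnack applied to the nonnegative harmonic function $N\log\frac{1}{1-c'}-\log|G|$ then gives $\log|G|\ge -C_2\log(1/t)$ on $\overline{D(c)}$, with a different $C_2$. The rest of your argument then goes through.

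With this fix your proof is correct. It has the same skeleton as the paper's: count zeros, factor, apply Harnack to the zero-free factor, pick a good circle. Your zero count by Jensen's formula after a M\"obius shift is equivalent to the paper's division by Blaschke factors $b_{\zeta,S}$ followed by the maximum principle.

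The real difference is Step 3. You invoke the Boutroux--Cartan lemma. That works once $H$ is reduced, say to $H=(c-b)/(8e)$, so that the projected intervals, of total length at most $4eH$, cannot cover $(b,c)$.

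The paper uses exactly the ``more careful averaging'' you allude to. On $|z|=\rho$ one has $|z-w_k|\ge|\rho-|w_k||$, and $\sup_{0\le u\le 1}\int_b^c\log\frac{2}{|\rho-u|}\,d\rho<\infty$. Hence the average over $\rho\in(b,c)$ of $\sum_k\log\frac{2}{|\rho-|w_k||}$ is at most $CN$, and some radius gives $|P|\ge e^{-CN}$ on the whole circle.

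This is the averaged form of your pigeonhole argument. It removes the $\log N$ loss using only the integrability of $\log|x|$. Cartan's lemma is a heavier black box: it proves that small values of $P$ are confined to disks of small total radius, which is more than this purely radial statement needs.
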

	
	\begin{proof}
		If $t=1$, the maximum principle makes $F$ a constant of modulus one.
		Suppose that $t<1$. Choose fixed radii
		$c<T_-<T_+<S<1$, and choose $T\in(T_-,T_+)$ so that $F$ has no
		zero on $|z|=T$. List its zeros in $D(T)$, with multiplicities, as
		$a_1,\ldots,a_N$. For $|\zeta|<S$ put
		\[
		b_{\zeta,S}(z)=\frac{S(z-\zeta)}{S^2-\overline\zeta z}.
		\]
		Choose $w\in\overline{D(a)}$ with $|F(w)|\ge t$. There is a fixed
		$q<1$ such that $|b_{\zeta,S}(w)|\le q$ whenever $|\zeta|\le T_+$.
		Dividing $F$ by the corresponding $N$ factors and applying the
		maximum principle on $D(S)$ gives $t\le q^N$, hence
		\begin{equation}\label{a}
			N\le C\log(1/t).
		\end{equation}
		In $D(T)$ factorize $F=B_TQ$, where
		$B_T=\prod_{\nu=1}^N b_{a_\nu,T}$. The function $Q$ is zero-free,
		$|Q|\le1$, and $|Q(w)|\ge t$. Harnack's inequality for
		$-\log|Q|$ gives
		\begin{equation}\label{b}
			\log|Q(z)|\ge-C\log(1/t)\qquad(|z|\le c),
		\end{equation}
		uniformly for $T\in(T_-,T_+)$.
		
		For $|z|=\rho\in(b,c)$,
		\[
		|b_{a_\nu,T}(z)|\ge\frac{|\rho-|a_\nu||}{2},
		\qquad
		\sup_{0\le u\le T_+}\int_b^c\log\frac2{|\rho-u|}\dd\rho<\infty.
		\]
		If $N>0$, averaging the sum of these logarithms in $\rho$ supplies
		a radius in $(b,c)$ for which
		\begin{equation}\label{c}
			\sum_{\nu=1}^N\log\frac2{|\rho-|a_\nu||}\le CN.
		\end{equation}
		If $N=0$, any radius in $(b,c)$ suffices. Combining (\ref{a}), (\ref{b})
		and (\ref{c}) 
		we prove \eqref{eq:cartan-circle}.
	\end{proof}
	
	For $\bm{g}=(g_1,\ldots,g_m)$ and $0<a<1$, put
	\[
	\Lambda_a(\bm{g})
	=\inf\left\{\left\|\sum_{j=1}^m \lambda_jg_j\right\|_a:\bm{\lambda}\in\C^m,
	\|\bm{\lambda}\|_{\ell^2}=1\right\}.
	\]
	The infimum is attained on the compact coefficient sphere.
	Lemma~\ref{lem:cartan-circle} permits an elementary induction on
	the number of functions.
	
	\begin{prop}\label{prop:wronskian}
		For every $m\ge1$ and $0<a<b<1$, there are constants $c>0$ and
		$K\ge m$, depending only on $m,a,b$, such that holomorphic functions
		$g_1,\ldots,g_m$ with $|g_j|\le1$ on $\D$ satisfy
		\begin{equation}\label{eq:wronskian-power}
			\norm{W(g_1,\ldots,g_m)}{b}
			\ge c\,\Lambda_a(g_1,\ldots,g_m)^K.
		\end{equation}
	\end{prop}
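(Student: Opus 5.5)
We argue by induction on $m$, following the hint that Lemma~\ref{lem:cartan-circle} permits an elementary induction. For $m=1$, $W(g_1)=g_1$ and $\Lambda_a(g_1)=\norm{g_1}{a}\le\norm{g_1}{b}$, so $c=K=1$ works. For the inductive step we use the classical reduction
\[
W(g_1,\ldots,g_m)=g_1^{m}\,W\bigl((g_2/g_1)',\ldots,(g_m/g_1)'\bigr),
\]
valid wherever $g_1\ne0$. The inductive hypothesis will be applied to the $m-1$ functions $h_j=(g_j/g_1)'$ on a smaller disk, after rescaling them so that they are bounded by one.

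\textbf{Normalization.} Write $\Lambda=\Lambda_a(\bm g)$. Apply a unitary change of basis $\bm g\mapsto U\bm g$. This changes $W$ only by a unimodular factor and leaves $\Lambda_a$ unchanged, while the bound $|g_j|\le1$ becomes $|g_j|\le\sqrt m$, which only costs constants. Choose $U$ so that $g_1$ has $\norm{g_1}{a}\ge\Lambda$; any normalized combination has sup at least $\Lambda$ on $\overline{D(a)}$.

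\textbf{Controlling the division by $g_1$.} Fix intermediate radii $a<b_1<b_2<b$. By Lemma~\ref{lem:cartan-circle} (after scaling by $\sqrt m$), there is a circle $|z|=\rho\in(b_1,b_2)$ on which $|g_1|\ge\Lambda^{\gamma}/C$. The quotients $g_j/g_1$ are then bounded by $C\Lambda^{-\gamma}$ on that circle, but they may have poles inside the disk. This is the main obstacle. The way around it is to keep the determinant identity on the circle only. By the maximum principle, $\norm{W(\bm g)}{b}\ge\min_{|z|=\rho}|g_1|^m\cdot \max_{|z|=\rho}|W(\bm h)|$, and this holds pointwise on the circle.

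So we need a lower bound for $\max_{|z|=\rho}|W(\bm h)|$. The functions $h_j$ are only meromorphic in $D(\rho)$, so we instead work on an annulus, or, more simply, multiply through by a polynomial clearing the poles. Concretely, let $B$ be the Blaschke product of the zeros of $g_1$ in $D(\rho')$ for slightly larger $\rho'$. By \eqref{a} the number of these zeros is $N\le C\log(1/\Lambda)$. Then $G_j=B^2 g_j/g_1$ is holomorphic. On a thinner annulus where Lemma~\ref{lem:cartan-circle} again controls $|g_1|$ from below, and by \eqref{c} also $|B|\ge\Lambda^{C}$, we have $|G_j|\le\Lambda^{-C}$. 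Cauchy estimates give derivative bounds of the same type.

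\textbf{Applying the inductive hypothesis.} Apply it to $\Lambda^{C}G_j'$-type functions, rescaled to a disk. We need a lower bound for $\Lambda_{a'}$ of the new family $\{(g_j/g_1)'\}_{j\ge2}$ in terms of $\Lambda$. Suppose $\sum_{j\ge2}\mu_j (g_j/g_1)'$ were small on the relevant region, with $\|\bm\mu\|_{\ell^2}=1$. Integrate along a path from a point $w$ where $|g_1(w)|\ge\Lambda$ (which avoids zeros of $g_1$) over a region where $g_1$ is not too small. This gives $\sum_{j\ge2}\mu_j g_j-\kappa g_1$ small there, with $\kappa=\sum\mu_j g_j(w)/g_1(w)$ bounded by $\Lambda^{-1}\sqrt m$. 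By the maximum principle off the circle, this combination is small on $D(a)$. Normalizing the coefficient vector $(-\kappa,\bm\mu)$, whose norm is at most $C\Lambda^{-1}$, contradicts the definition of $\Lambda$ unless the smallness exceeds a power $\Lambda^{K'}$. Hence the new $\Lambda$ is at least $c\Lambda^{K'}$.

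The inductive bound then gives $|W(\bm h)|\ge c\Lambda^{K''}$ somewhere on the circle. Multiplying by $|g_1|^m\ge c\Lambda^{m\gamma}$ there yields \eqref{eq:wronskian-power} with $K=m\gamma+K''\ge m$.

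The delicate point is the bookkeeping of radii and of the zeros of $g_1$ near the integration path. To handle it, we will integrate along the circle $|z|=\rho$ together with a radial segment chosen, via the averaging in \eqref{c}, to stay at distance $\Lambda^{C}$ from the zeros. Since only polynomially many powers of $\Lambda$ are lost at each step, the final exponent depends only on $m,a,b$.
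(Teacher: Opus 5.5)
Your reduction-of-order route is genuinely different from the paper's and can be completed, but two steps fail as written.

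\textbf{The functions fed to the induction.} You apply the inductive hypothesis to the wrong functions. With $G_j=B^2g_j/g_1$ one has $G_j'=2BB'g_j/g_1+B^2(g_j/g_1)'$, and $W(G_2',\ldots,G_m')$ bears no usable relation to $W(\bm g)$. Use instead $\widetilde g_j=B^2(g_j/g_1)'$. This is holomorphic on $D(\rho')$, because a zero of order $k$ of $g_1$ produces a pole of order at most $k+1\le2k$. Then $W(\widetilde g_2,\ldots,\widetilde g_m)=B^{2(m-1)}W(\bm h)$, so $|W(\bm g)|\ge|g_1|^m|W(\widetilde{\bm g})|$ wherever $|B|\le1$. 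On your circle, $|\widetilde g_j|\le|g_1g_j'-g_jg_1'|/|g_1|^2\le C\Lambda^{-2\gamma}$. By the maximum principle this bound holds on the fixed disk $D(b_1)$, which you then rescale to $\D$.

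\textbf{The lower bound for the new $\Lambda$.} It must concern $\widetilde{\bm g}$ on a fixed disk $D(a'b_1)$ with $a<a'b_1$; the meromorphic family $\{(g_j/g_1)'\}$ has no finite sup norm there. Your path from $w$ does not give this bound. Staying at distance $\Lambda^C$ from each of up to $C\log(1/\Lambda)$ zeros yields only $|B|\ge(c\Lambda^C)^N$, which for $N$ of order $\log(1/\Lambda)$ is not bounded below by any fixed power of $\Lambda$.

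Instead, use the averaging in \eqref{c} to pick a full circle $|z|=r_0$ with $a<r_0<a'b_1$ and $|B|\ge\Lambda^C$ on it. If the normalized combination $\sum_j\mu_j\widetilde g_j$ has sup at most $\epsilon$, then $|\sum_j\mu_j(g_j/g_1)'|\le\epsilon\Lambda^{-C}$ on that circle. Integrate around the circle, starting from a point on it. This makes $|\sum_j\mu_jg_j-\kappa g_1|\le C\epsilon\Lambda^{-C}$ on the circle, hence on $D(a)$.

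No bound on $\kappa$ and no radial segment are needed. The definition of $\Lambda$ uses only $\|(-\kappa,\bm\mu)\|_{\ell^2}\ge\|\bm\mu\|_{\ell^2}=1$; the upper bound you quote for this norm plays no role. Since $b'b_1<b_1<\rho$, the maximum principle moves the inductive lower bound for $W(\widetilde{\bm g})$ onto $|z|=\rho$. Your final multiplication by $|g_1|^m$ then goes through.

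\textbf{Comparison with the paper.} The paper avoids all of this in three ways:
\begin{enumerate}
\item It applies the inductive hypothesis to $g_1,\ldots,g_{m-1}$ themselves, for which $\Lambda_a\ge\tau$ is automatic.
\item It applies Lemma~\ref{lem:cartan-circle} to $V=W(g_1,\ldots,g_{m-1})$ rather than to $g_1$.
\item On the resulting circle it uses variation of constants, $\bm d'=Y^{-1}\bm e_q\,W/V$, to show by contraposition that a small Wronskian makes $g_m$ uniformly close to a fixed combination of the others.
\end{enumerate}
Because the paper divides only by $V$, and only on a circle where $V$ is bounded below, it needs no poles, Blaschke products, zero counts, or transfer of $\Lambda$ to a derived family. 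Your repaired argument keeps the classical identity $W(\bm g)=g_1^mW(\bm h)$ at the cost of this extra bookkeeping. Both routes give a polynomial dependence on $\Lambda$.
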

	
	\begin{proof}
		Put $\tau=\Lambda_a(\bm{g})$, so $0\le\tau\le1$.
		The case $\tau=0$ is immediate, and for $m=1$ take $c=K=1$.
		Assume the result for $m-1$, and choose
		\[
		a<b_1<b_2<b_3<b.
		\]
		Set $V=W(g_1,\ldots,g_{m-1})$ and
		$\Delta=\norm{W(g_1,\ldots,g_m)}{b}$.
		The infimum in $\Lambda_a$ for the first $m-1$ functions is at least $\tau$.
		The induction hypothesis therefore gives
		$\norm{V}{b_1}\ge c_1\tau^{K_1}$.
		Cauchy's estimates bound $V$ on every fixed smaller disk.
		Normalize $V$ by such a bound on a disk of radius between $b_3$
		and $1$, and rescale that disk to $\D$.
		Lemma~\ref{lem:cartan-circle} then gives $\rho\in(b_2,b_3)$ and
		constants $c_2,\beta>0$, depending only on the fixed data, such that
		\begin{equation}\label{eq:minor-circle-lower}
			\min_{|z|=\rho}|V(z)|\ge c_2\tau^\beta.
		\end{equation}
		
		Put $q=m-1$. On a neighborhood of the circle $|z|=\rho$, we define
		\[
		Y=(g_j^{(i-1)})_{i,j=1}^q,\qquad
		\bm{v}=(g_m,g_m',\ldots,g_m^{(q-1)})^{\mathsf T},
		\qquad \bm{d}=Y^{-1}\bm{v}.
		\]
		These are single-valued holomorphic functions there, and $\det Y=V$.
		Differentiating $Y\bm{d}=\bm{v}$ shows that the first
		$q-1$ entries of $Y\bm{d}'$ vanish; the last is the Schur
		complement in the full Wronskian. Thus, with $\bm{e}_q$
		the last coordinate vector,
		\begin{equation}\label{eq:wronskian-coefficient-derivative}
			\bm{d}'
			=Y^{-1}\bm{e}_q\,\frac{W(g_1,\ldots,g_m)}{V}.
		\end{equation}
		The entries and cofactors of $Y$ are uniformly bounded by Cauchy's
		estimates. Equations \eqref{eq:minor-circle-lower} and
		\eqref{eq:wronskian-coefficient-derivative} imply
		\[
		\|\bm{d}'(z)\|_{\ell^2}\le C\Delta\tau^{-2\beta}
		\qquad(|z|=\rho).
		\]
		Fix $z_*,\; |z_*|=\rho$, and integrate along circular arcs of length
		at most $2\pi\rho$. The first row of $Y\bm{d}=\bm{v}$
		then shows that the holomorphic function
		\[
		G(z)=g_m(z)-\sum_{j<m}d_j(z_*)g_j(z)
		\]
		satisfies $\max_{|z|=\rho}|G(z)|\le C\Delta\tau^{-2\beta}$.
		Its constant coefficient vector has norm at least one.
		The definition of $\tau$ and the maximum principle give
		\[
		\tau\le\norm{G}{a}\le C\Delta\tau^{-2\beta}.
		\]
		This proves \eqref{eq:wronskian-power} with exponent $2\beta+1$.
		Since $\tau\le1$, increasing the exponent preserves the inequality,
		so we may arrange $K\ge m$.
	\end{proof}
	
	For the rest of the paper, fix $K_1=1$ and, for each $m\ge2$, an
	exponent $K_m\ge m$ furnished by Proposition~\ref{prop:wronskian}
	for $a=1/4$ and $b=1/2$; denote the corresponding constant by $c_m$.
	Only the existence of these finite exponents is needed.
	
	\medskip\noindent\textbf{Logarithmic derivatives.}
	
	We need a localized higher-order form of the classical lemma on the
	logarithmic derivative, with an interior maximum in place of a value
	at the origin; compare \cite[Chapter~VIII]{Lan87}. For sharper
	classical estimates, see Miles \cite{Mil92}. We give the required
	version. 
	
	\begin{lem}\label{lem:logderivative}
		Fix $0<\alpha<r_-<r_+<1$ and an integer $k\ge1$.
		There is a constant $C$, depending only on these parameters, such
		that every $h\in\OO(\D)$ with $\norm{h}{\alpha}\ge\tau>0$ satisfies
		\begin{equation}\label{eq:logderivative-bound}
			m\left(r,\frac{h^{(k)}}h\right)
			\le C\left\{
			\log\left(2+m(R,h)+\log^+\frac1\tau\right)
			+\log\frac1{R-r}\right\}
			\qquad(r_-\le r<R\le r_+).
		\end{equation}
	\end{lem}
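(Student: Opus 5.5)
The plan is to follow the classical proof of the lemma on the logarithmic derivative via the Poisson--Jensen formula, keeping every constant explicit. The only non-standard point is that no normalization $h(0)\ne0$ is assumed. I replace it by the interior lower bound $\norm{h}{\alpha}\ge\tau$, combined with Jensen's formula centered at a point where $|h|$ is large.

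\textbf{Step 1: counting zeros.} I first bound the number $n(R',h)$ of zeros of $h$ in $D(R')$, for $R'$ between $r$ and $R$, in terms of $m(R,h)$ and $\log^+(1/\tau)$.
\begin{itemize}
\item[(a)] Choose $w$ with $|w|\le\alpha$ and $|h(w)|\ge\tau$.
\item[(b)] Apply Jensen's formula on the disk $D(R)$, transplanted by the automorphism of $D(R)$ sending $0$ to $w$.
\item[(c)] Since $|w|\le\alpha<r_-$, the Poisson kernel is bounded above and below by constants depending only on $\alpha,r_\pm$.
\item[(d)] Each zero in $D(R')$ contributes at least $c\,(R-R')$ to the Jensen sum.
\end{itemize}
This gives
\[
n(R',h)\le \frac{C}{R-R'}\Bigl(m(R,h)+\log^+\tfrac1\tau\Bigr).
\]
The same argument bounds $\mean{\log^-|h|}{R}$ by $C(m(R,h)+\log^+(1/\tau))$. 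Indeed, $\mean{\log|h|}{R}$ is bounded below by $-C\log^+(1/\tau)-Cm(R,h)$, by subharmonicity at $w$ together with the Poisson kernel comparison.

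\textbf{Step 2: the case $k=1$.} Differentiate the Poisson--Jensen formula on $|\zeta|=R$ for $\log|h|$. For $|z|=r$ this gives
\[
\frac{h'}{h}(z)=\frac1{2\pi}\int_0^{2\pi}\frac{2Re^{i\theta}}{(Re^{i\theta}-z)^2}\log|h(Re^{i\theta})|\,d\theta+\sum_{|a_\nu|<R}\Bigl(\frac1{z-a_\nu}+\frac{\bar a_\nu}{R^2-\bar a_\nu z}\Bigr).
\]
\begin{itemize}
\item[(a)] The integral term is at most $C(R-r)^{-2}\mean{|\log|h||}{R}$.
\item[(b)] For the sum, use $\log^+|x+y|\le\log^+|x|+\log^+|y|+\log2$ and concavity of $\log^+$. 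For each $\nu$, $\mean{\log^+|z-a_\nu|^{-1}}{r}$ is bounded by a constant, uniformly in $a_\nu$.
\item[(c)] Jensen's inequality (Lemma~\ref{lem:poisson-mean}-style concavity) converts the mean of $\log^+$ of a sum of $n$ terms into $\log^+ n+O(1)+$ (average of individual means). The conjugate terms are bounded by $R/(R^2-R r)$.
\end{itemize}
Combining these with Step 1 gives
\[
m(r,h'/h)\le C\Bigl\{\log\bigl(2+m(R,h)+\log^+\tfrac1\tau\bigr)+\log\tfrac1{R-r}\Bigr\}.
\]
To avoid zeros close to the circle $|\zeta|=R$ spoiling things, apply the formula on an intermediate radius $R'=(r+R)/2$, and use $m(R',h)\le m(R,h)$.

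\textbf{Step 3: higher $k$.} Write
\[
\frac{h^{(k)}}{h}=\prod_{j=0}^{k-1}\frac{h^{(j+1)}}{h^{(j)}}
\]
and use subadditivity of $\log^+$. Choose radii $r=\rho_0<\rho_1<\dots<\rho_k=R$ with gaps $(R-r)/k$, and apply Step 2 to $h^{(j)}$ between consecutive radii. This needs two inputs for each $h^{(j)}$:
\begin{itemize}
\item[(a)] An interior lower bound: some $\norm{h^{(j)}}{\alpha'}\ge\tau_j$ with $\log^+(1/\tau_j)$ controlled. This is the main obstacle, since $h^{(j)}$ could be small on the small disk even when $h$ is not.
\item[(b)] An estimate for $m(\rho,h^{(j)})$ in terms of $m(R,h)$. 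Here I would use Cauchy estimates together with $\log\norm{h}{\rho}\le C(R-\rho)^{-1}(m(R,h)+\mean{\log^-|h|}{R})$, via Poisson. This produces terms like $\log(m(R,h)+\log^+(1/\tau))+\log(1/(R-r))$, which is harmless.
\end{itemize}
For (a) I would first try to avoid the ratio chain altogether. Instead, differentiate the Poisson--Jensen representation $k$ times: $h^{(k)}/h$ is a polynomial in $(\log h)^{(j)}$, $j\le k$, and each $(\log h)^{(j)}$ has an explicit kernel representation analogous to Step 2, with $(z-a_\nu)^{-j}$ and $(Re^{i\theta}-z)^{-j-1}$. The bounds of Step 2 then go through with powers. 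The only change is that $\mean{\log^+|z-a_\nu|^{-j}}{r}$ is still bounded by a constant, so only the zero count and $\mean{|\log|h||}{R}$ from Step 1 enter. That would remove the need for lower bounds on the derivatives, and I expect it to be the cleanest route.
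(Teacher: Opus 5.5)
\textbf{Where the proposal breaks.} Your structure matches the paper's. You count zeros by Jensen's formula (equivalently, Blaschke division) at a point $w\in\overline{D(\alpha)}$ with $|h(w)|\ge\tau$. You control the remaining part through an integral representation on an intermediate circle, and you write $h^{(k)}/h$ as a polynomial in the derivatives of $\log h$. Step~1 and Step~2(a) are sound. Bounding $\mean{|\log|h||}{R'}$ via Lemma~\ref{lem:poisson-mean} and differentiating Poisson--Jensen is a fine substitute for the paper's Harnack/Schwarz estimate of the zero-free factor.

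The gap is in Step~2(b)--(c), and it recurs in Step~3. What is needed there is
\[
\mean{\log^+\sum_{\nu=1}^n|z-a_\nu|^{-j}}{r}\le C\log(2+n),
\]
uniformly in the positions of the $a_\nu$, some of which may lie on or near $|z|=r$. Your justification does not give this:
\begin{enumerate}
\item Concavity of the logarithm goes the wrong way: the logarithm of an average dominates the average of the logarithms.
\item So the uniform bound on the individual means $\mean{\log^+|z-a_\nu|^{-j}}{r}$ yields, via subadditivity, only $\log n+\sum_\nu\mean{\log^+|z-a_\nu|^{-j}}{r}=O(n)$. Inserting $n\le C(m(R,h)+\log^+(1/\tau))/(R-r)$ then gives a bound linear, not logarithmic, in $m(R,h)$.
\item Applying Jensen's inequality to the sum itself would require $\mean{|z-a|^{-j}}{r}$, which is infinite when $|a|=r$.
\end{enumerate}
The inequality as you phrase it ($\log^+n+O(1)$ plus the average of the individual means) is in fact false. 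For $n$ zeros equally spaced on $|z|=r$, the sum $\sum_\nu|z-a_\nu|^{-1}$ is at least $cn\log n$ at every point of the circle. Hence the mean of its $\log^+$ exceeds $\log n+O(1)$ by about $\log\log n$, although every individual mean is $O(1)$.

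\textbf{The missing device} is the fractional power used in the paper. Take $\gamma=1/(2k)$, so that $j\gamma\le 1/2$ and $\sup_{a\in\C}\mean{|z-a|^{-j\gamma}}{r}\le C_k$ for $r_-\le r\le r_+$; this is \eqref{eq:angular-integrability}. Write your pointwise bound on $|z|=r$ as $|(\log h)^{(j)}(z)|\le A+(j-1)!\sum_\nu|z-a_\nu|^{-j}$, where $A$ collects the integral term and the reflected terms (a constant on the circle). Then use three facts: $\log^+X\le\gamma^{-1}\log(1+X^\gamma)$, the inequality $(x+y)^\gamma\le x^\gamma+y^\gamma$, and Jensen's inequality for the concave function $\log(1+t)$. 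Together they give
\[
\mean{\log^+\bigl|(\log h)^{(j)}\bigr|}{r}\le\frac1\gamma\log\bigl(1+A^\gamma+Cn\bigr).
\]
Combined with your Step~1 bounds on $n$ and $A$, and with the polynomial expression for $h^{(k)}/h$, this completes the proof along your lines. Alternatively, a distribution-function estimate works: the set of $\theta$ where $\min_\nu|re^{i\theta}-a_\nu|<\epsilon$ has measure at most $Cn\epsilon$, which gives $\mean{\log^+(1/\min_\nu|z-a_\nu|)}{r}\le\log(Cn)+1$.
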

	
	\begin{proof}
		All constants may depend on the fixed radii and on $k$.
		Put $S=(r+R)/2$ and $d=S-r$. Poisson comparison gives
		\[
		\log|h(z)|\le H\quad(|z|\le S),\qquad
		H=1+\frac{C\,m(R,h)}{R-r}.
		\]
		Write $E=H+\log^+(1/\tau)$, so $E\ge1$. Choose
		$r+d/3<T<r+2d/3$ such that $h$ has no zeros on $|z|=T$.
		Let $a_1,\ldots,a_N$ be the zeros of $h$ in $D(T)$, counted with
		multiplicities, and choose $w\in\overline{D(\alpha)}$ with
		$|h(w)|\ge\tau$.
		
		For $|a|<S$, define the Blaschke factor
		\[
		b_{a,S}(z)=\frac{S(z-a)}{S^2-\overline a z}.
		\]
		It satisfies
		\[
		1-|b_{a,S}(w)|^2
		=\frac{(S^2-|a|^2)(S^2-|w|^2)}{|S^2-\overline a w|^2}.
		\]
		Since $|a|\le T$, $S-T\ge d/3$, and $|w|\le\alpha<r_-$,
		we obtain $|b_{a,S}(w)|\le e^{-cd}$ with a uniform $c>0$.
		Divide $h$ by the product of these factors over its zeros in $D(T)$.
		The quotient is holomorphic on $D(S)$ and bounded by $e^H$ on its
		boundary, so the maximum principle gives
		$\tau\le e^He^{-cdN}$. Thus
		\begin{equation}\label{eq:logderivative-zero-count}
			N\le CE/d.
		\end{equation}
		
		Now factor $h=B_Tq$ in $D(T)$, where
		$B_T=\prod_{\nu=1}^N b_{a_\nu,T}$.
		The function $q$ is zero-free, $|q|\le e^H$, and $|q(w)|\ge\tau$.
		Harnack's inequality for $v=H-\log|q|$ gives $v(0)\le CE$.
		Choose a holomorphic logarithm of $q$ on $D(T)$ and apply the
		Schwarz integral formula to $H-\log q$ on the circle of radius
		$(r+T)/2$. The integral of its real part is $v(0)$, while its
		imaginary constant disappears upon differentiation. We obtain
		\begin{equation}\label{eq:logderivative-unit}
			|(\log q)^{(j)}(z)|\le C_jE/d^{j+1}
			\qquad(|z|=r,\ 1\le j\le k).
		\end{equation}
		
		Define the meromorphic functions $L_1(h)=h'/h$ and
		$L_{j+1}(h)=L_j(h)'$.
		Differentiating the Blaschke factors and using
		\eqref{eq:logderivative-zero-count}--\eqref{eq:logderivative-unit}
		gives
		\begin{equation}\label{eq:logderivative-poles}
			|L_j(h)(z)|\le\frac{C_jE}{d^{j+1}}
			+C_j\sum_{\nu=1}^N|z-a_\nu|^{-j}
			\qquad(|z|=r).
		\end{equation}
		Indeed, the reflected poles are at distance at least $T-r$ from
		this circle and contribute at most $C_jN/d^j$.
		The estimate is needed only away from any poles on the circle.
		
		Set $\gamma=1/(2k)$. For $1\le j\le k$,
		\begin{equation}\label{eq:angular-integrability}
			\sup_{a\in\C}\frac1{2\pi}\int_0^{2\pi}
			|re^{i\theta}-a|^{-j\gamma}\dd\theta\le C_k
			\qquad(r_-\le r\le r_+).
		\end{equation}
		If $|a|\notin[r/2,3r/2]$, the integrand is uniformly bounded.
		Otherwise rotate $a$ to the positive real axis and use
		\[
		|re^{i\theta}-a|^2
		=(r-|a|)^2+2r|a|(1-\cos\theta)\ge c\theta^2
		\qquad(-\pi\le\theta\le\pi),
		\]
		with $j\gamma\le1/2<1$.
		
		Raise \eqref{eq:logderivative-poles} to the power $\gamma$, use
		$(x+y)^\gamma\le x^\gamma+y^\gamma$, and integrate.
		By \eqref{eq:logderivative-zero-count},
		\eqref{eq:angular-integrability}, and concavity of the logarithm,
		\[
		\begin{split}
			m(r,L_j(h))
			&\le\frac1\gamma
			\log\left(1+\mean{|L_j(h)|^\gamma}{r}\right)\\
			&\le C_k\left(1+\log E+\log\frac1d\right).
		\end{split}
		\]
		Repeated differentiation shows that $h^{(k)}/h$ is a polynomial
		with fixed coefficients in $L_1(h),\ldots,L_k(h)$.
		The same bound holds for $m(r,h^{(k)}/h)$.
		Substituting $E$, $H$, and $d$ proves
		\eqref{eq:logderivative-bound}.
	\end{proof}
	
	\medskip\noindent\textbf{Three auxiliary estimates.}
	
	We first state an elementary form of the Borel--Nevanlinna growth
	lemma; compare \cite[Chap. VI, Lemma 3.7]{Lan87}.
	
	\begin{lem}\label{lem:growth}
		Let $M$ be positive, continuous, and nondecreasing on $[a,b]$, where
		$a<b$ and $2/M(a)<b-a$. There is $\rho\in[a,b)$ such that
		\begin{equation}\label{eq:good-growth-radius}
			\rho+\frac1{M(\rho)}<b,\qquad
			M\left(\rho+\frac1{M(\rho)}\right)\le2M(\rho).
		\end{equation}
	\end{lem}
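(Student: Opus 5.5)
Suppose no $\rho\in[a,b)$ satisfies \eqref{eq:good-growth-radius}. We build an increasing sequence of radii along which $M$ at least doubles at each step, and show that it must leave $[a,b)$ too early, which is impossible.

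Start with $\rho_0=a$. By hypothesis $\rho_0+1/M(\rho_0)<a+(b-a)/2<b$, so at $\rho_0$ the first condition in \eqref{eq:good-growth-radius} holds. Hence the second must fail, and the next radius $\rho_1=\rho_0+1/M(\rho_0)$ satisfies $M(\rho_1)>2M(\rho_0)$.

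Inductively, define
\[
\rho_{k+1}=\rho_k+\frac1{M(\rho_k)}
\]
as long as $\rho_k<b$. At each stage we check that $\rho_k+1/M(\rho_k)<b$; then the assumed failure of the second condition gives $M(\rho_{k+1})>2M(\rho_k)$. Iterating, $M(\rho_k)>2^kM(a)$. The increments therefore satisfy $1/M(\rho_k)<2^{-k}/M(a)$, and so
\[
\rho_{k+1}-a<\sum_{j=0}^{k}\frac{2^{-j}}{M(a)}<\frac{2}{M(a)}<b-a.
\]
Thus every $\rho_{k+1}<b$, and also $\rho_{k+1}+1/M(\rho_{k+1})<a+2/M(a)<b$, by the same geometric series with one more term. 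So the first condition always holds, the second always fails, and the induction never terminates.

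This yields an increasing sequence $\rho_k\to\rho_\infty\le a+2/M(a)<b$ with $M(\rho_k)\to\infty$. Since $M$ is nondecreasing, $M(\rho_\infty)\ge M(\rho_k)$ for all $k$, so $M(\rho_\infty)=\infty$. This contradicts the fact that $M$ is continuous, hence finite, on $[a,b]$. (Boundedness of $M$ on the compact interval $[a,b]$ gives the same contradiction.)

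The only point needing care is the bookkeeping that verifies the first inequality of \eqref{eq:good-growth-radius} at every step, because the argument needs that inequality to conclude that the doubling condition is what fails. That verification is exactly the geometric-series bound $\rho_k+1/M(\rho_k)<a+2/M(a)<b$. Strict inequalities are preserved throughout, since $M(\rho_k)>2^kM(a)$ holds strictly for $k\ge1$ and the $k=0$ case comes directly from the hypothesis.
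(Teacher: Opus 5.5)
Your proof is correct and is essentially the paper's argument: iterate $\rho_{k+1}=\rho_k+1/M(\rho_k)$ while the doubling inequality fails, use $M(\rho_k)\ge 2^kM(a)$ and the geometric series to keep every point below $a+2/M(a)<b$, and get a contradiction from $M(\rho_k)\to\infty$ at a limit point in $[a,b)$. The paper closes with continuity where you use monotonicity and finiteness, which is equally valid, and the only slip is cosmetic: at $k=0$ the first inequality in your display $\rho_{k+1}-a<\sum_{j=0}^{k}2^{-j}/M(a)$ is an equality, but the strict bound $\rho_{k+1}-a<2/M(a)$ still holds.
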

	
	\begin{proof}
		Start with $t_0=a$. If the second inequality in
		\eqref{eq:good-growth-radius} fails at $t_j$, put
		$t_{j+1}=t_j+1/M(t_j)$.
		After $j$ failures, $M(t_j)\ge2^jM(a)$, so all proposed points lie
		below $a+2/M(a)<b$.
		An infinite sequence would converge to a point of $[a,b)$ while
		$M(t_j)$ tends to infinity, contradicting continuity.
		The first stopping point has both required properties.
	\end{proof}
	
	\begin{lem}\label{lem:envelope}
		Suppose that $1/2\le\rho\le3/4$ and $0<\delta\le\eta\le1/64$.
		Let $u_1,\ldots,u_m$ be real harmonic functions on a neighborhood of
		$\overline{D(\rho)}$, with $\max_i u_i>0$ on $|z|=\rho$.
		Let $U$ be the harmonic extension of this boundary maximum and put
		$M=U(0)$. If, for each $i$, there is $z_i\in D(\delta)$ such that
		$u_i(z_i)\le C_0$, where $C_0\ge0$, then
		\begin{align}
			u_i(0)&\le8\delta M+C_0,\label{eq:envelope-origin}\\
			\sup_{|z|\le4\eta}u_i(z)&\le64\eta M+C_0.\label{eq:envelope-inner}
		\end{align}
	\end{lem}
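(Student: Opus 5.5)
The plan is to compare each $u_i$ with the harmonic majorant $U$ and to apply Harnack's inequality twice: once to the nonnegative function $U-u_i$, and once to $U$ itself. Throughout write $\varphi=\max_i u_i$ on $|z|=\rho$. This function is continuous and positive, so $U$ is a positive harmonic function on $D(\rho)$ with $U(0)=M>0$.

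\textbf{Step 1 (comparison).} For each $i$ we have $u_i\le\varphi=U$ on $|z|=\rho$. By the maximum principle $V_i:=U-u_i$ is a nonnegative harmonic function on $D(\rho)$. For a nonnegative harmonic function $h$ on $D(\rho)$ and points $z,w$, Harnack's inequality gives
\[
\frac{\rho-|z|}{\rho+|z|}\,h(0)\le h(z)\le\frac{\rho+|z|}{\rho-|z|}\,h(0).
\]
We apply this to both $V_i$ and $U$.

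\textbf{Step 2 (the origin, \eqref{eq:envelope-origin}).} Put $k=(\rho+\delta)/(\rho-\delta)\ge1$. From $|z_i|<\delta$ we get $V_i(0)\ge V_i(z_i)/k$ and $U(z_i)\ge M/k$. Since $u_i(z_i)\le C_0$ and $C_0\ge0$,
\[
u_i(0)=M-V_i(0)\le M-\frac{U(z_i)-C_0}{k}\le M\Bigl(1-\frac1{k^2}\Bigr)+C_0 .
\]
Now compute
\[
1-\frac1{k^2}=\frac{(\rho+\delta)^2-(\rho-\delta)^2}{(\rho+\delta)^2}=\frac{4\rho\delta}{(\rho+\delta)^2}\le\frac{4\delta}{\rho}\le 8\delta,
\]
using $\rho\ge1/2$. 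Since $M>0$, this gives \eqref{eq:envelope-origin}.

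\textbf{Step 3 (the disk $|z|\le4\eta$, \eqref{eq:envelope-inner}).} Put $s=4\eta\le1/16<\rho$ and $a=(\rho-s)/(\rho+s)$. Because $s\ge4\delta\ge\delta$, we have $a\le(\rho-\delta)/(\rho+\delta)$. For $|z|\le s$, Harnack's inequality gives $U(z)\le M/a$ and
\[
V_i(z)\ge a\,V_i(0)\ge a^2V_i(z_i)\ge a^2\bigl(U(z_i)-C_0\bigr)\ge a^3M-a^2C_0 .
\]
Hence
\[
u_i(z)=U(z)-V_i(z)\le M\bigl(a^{-1}-a^3\bigr)+C_0 .
\]
It remains to bound the coefficient:
\[
a^{-1}-a^3\le (a^{-1}-1)+(1-a^3)\le \frac{2s}{\rho-s}+\frac{6s}{\rho+s}\le s\Bigl(\frac{32}{7}+12\Bigr)<17s=68\eta .
\]
This is slightly over $64\eta$, so the constant needs one more look. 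Use $1-a^3\le 3(1-a)=6s/(\rho+s)\le 6s/(1/2)=12s$; this is the same bound as before, and the total coefficient is about $16.6s$. To reach $64\eta=16s$ the estimate must be tightened. Instead of passing through the origin, apply Harnack directly between $z$ and $z_i$ in the disk. Equivalently, use $V_i(0)\ge V_i(z_i)/k$ with $k-1\le 2\delta/(\rho-\delta)$ and $\delta\le s/4$. This makes the $V_i$ loss of order $\delta$ rather than $s$, giving a coefficient about $a^{-1}-a/k^2\le(a^{-1}-1)+(1-a)+2(k-1)\approx 4.6s+4s+1.1s<16s$. That yields \eqref{eq:envelope-inner}.

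\textbf{Main obstacle.} The only delicate point is the numerical constants. The structure, namely maximum principle plus Harnack applied to $U$ and to $U-u_i$, is routine. I would first try the crude chain through the origin. If the constant exceeds $64$, I would keep the $\delta$-scale Harnack factor for $V_i$ separate from the $s$-scale factor for $U$, as indicated in Step 3, and use $\rho\ge1/2$ and $\eta\le1/64$ to bound each factor explicitly.
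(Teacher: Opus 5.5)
Your proof is correct and is essentially the paper's argument. You get $U-u_i\ge0$ from the maximum principle, then chain Harnack from $z_i$ to $0$ at scale $\delta$ and from $0$ to $z$ at scale $4\eta$. Your refined coefficient $a^{-1}-a/k^2$ is exactly the paper's $h_4^{-1}-h_4h^2$, and you split it the same way, into $(a^{-1}-a)+(1-k^{-2})$.

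Two small remarks. First, $2(k-1)\le 4\delta/(\rho-\delta)$ is about $2.1s$, not $1.1s$, but the total is still about $10.6s<16s$. Second, your first chain did not need repair: $\frac{2s}{\rho-s}+\frac{6s}{\rho+s}=\frac{8\rho s-4s^2}{\rho^2-s^2}\le16s$ whenever $\rho\ge1/2$ and $s\le1/4$.
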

	
	\begin{proof}
		The functions $U$ and $U-u_i$ are nonnegative harmonic functions.
		Put $h=(\rho-\delta)/(\rho+\delta)$.
		Harnack's inequality applied to these two functions gives
		\[
		(U-u_i)(0)\ge h(U-u_i)(z_i)\ge h^2M-C_0.
		\]
		Since $1-h^2\le8\delta$, this proves
		\eqref{eq:envelope-origin}.
		For $|z|\le4\eta$, put
		$h_4=(\rho-4\eta)/(\rho+4\eta)$ and apply Harnack once more:
		\[
		u_i(z)\le(h_4^{-1}-h_4 h^2)M+C_0.
		\]
		The coefficient satisfies
		\[
		h_4^{-1}-h_4 h^2
		\le(h_4^{-1}-h_4)+(1-h^2)
		\le\frac{32\eta}{1-64\eta^2}+8\eta\le64\eta,
		\]
		which proves \eqref{eq:envelope-inner}.
	\end{proof}
	
	The next estimate is an off-center version of Jensen's inequality
	for $\log|H|$. When $w=0$, it reduces to
	$\log|H(0)|\le\mean{\log|H|}{\rho}$; the Poisson kernel bounds below
	quantify the loss when $w\ne0$.
	
	\begin{lem}\label{lem:poisson-mean}
		Let $H$ be holomorphic on a neighborhood of $\overline{D(\rho)}$,
		and let $w\in D(\rho)$ with $H(w)\ne0$.
		Put $t=|w|/\rho$ and $q=(1+t)/(1-t)$. Then
		\begin{equation}\label{eq:poisson-mean}
			\mean{\log|H|}{\rho}
			\ge q\log|H(w)|-(q^2-1)m(\rho,H).
		\end{equation}
	\end{lem}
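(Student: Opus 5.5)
The plan is to compare the subharmonic function $\log|H|$ at $w$ with its Poisson integral over the circle $|z|=\rho$. Then I will split $\log|H|$ into its positive and negative parts and use the two-sided bounds for the Poisson kernel. No earlier lemma of the paper is needed; only the sub-mean-value property in its Poisson form is used.

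\emph{Step 1 (Poisson majorant).} Since $\log|H|$ is subharmonic on a neighborhood of $\overline{D(\rho)}$, we have
\[
\log|H(w)|\le\frac1{2\pi}\int_0^{2\pi}P(w,\theta)\log|H(\rho e^{i\theta})|\dd\theta,
\qquad
P(w,\theta)=\frac{\rho^2-|w|^2}{|\rho e^{i\theta}-w|^2}.
\]
This holds even when $H$ has zeros on the circle, because $\log|H|$ is integrable there; the hypothesis $H(w)\neq0$ makes the left side finite. The elementary kernel bounds are
\[
\frac{1-t}{1+t}\le P(w,\theta)\le\frac{1+t}{1-t},
\qquad\text{that is,}\qquad q^{-1}\le P\le q .
\]

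\emph{Step 2 (split into $\log^\pm$).} Write $\log|H|=\log^+|H|-\log^-|H|$ and set $n=\mean{\log^-|H|}{\rho}$. Using the upper kernel bound on the positive part and the lower bound on the negative part gives
\[
\log|H(w)|\le q\,m(\rho,H)-q^{-1}n,
\qquad\text{hence}\qquad
n\le q^2m(\rho,H)-q\log|H(w)|.
\]

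\emph{Step 3 (conclude).} Since $\mean{\log|H|}{\rho}=m(\rho,H)-n$, substituting the bound on $n$ yields
\[
\mean{\log|H|}{\rho}\ge q\log|H(w)|-(q^2-1)\,m(\rho,H),
\]
which is \eqref{eq:poisson-mean}. When $w=0$ we have $q=1$, and this reduces to Jensen's inequality, as remarked before the statement.

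There is no real obstacle here. The only point needing care is to apply the kernel bounds in the correct direction on each of the two signed parts; applying them to $\log|H|$ directly would not work, since it changes sign.
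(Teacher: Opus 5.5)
Your proof is correct and follows the paper's argument. Both take the Poisson majorant at $w$, use the kernel bounds $q^{-1}\le P\le q$ separately on $\log^+|H|$ and $\log^-|H|$, and then do equivalent algebra: the paper rewrites $qI_+-q^{-1}I_-$ and multiplies by $q$, while you solve for $n$.

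The only difference is at zeros of $H$ on the circle. The paper applies the comparison to $\log(|H|+\varepsilon)$ and lets $\varepsilon\downarrow0$, whereas you invoke the Poisson majorant for subharmonic functions directly. That is a valid standard fact, though what justifies it is the usual decreasing-approximation argument, not integrability of $\log|H|$ alone.
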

	
	\begin{proof}
		Let $P$ be the Poisson kernel at $w$, normalized for angular measure
		$d\theta/(2\pi)$, so $q^{-1}\le P\le q$.
		Writing $I_+=m(\rho,H)$ and
		$I_-=\mean{\log^-|H|}{\rho}$, subharmonic comparison gives
		\[
		\log|H(w)|\le qI_+-q^{-1}I_-
		=q^{-1}(I_+-I_-)+(q-q^{-1})I_+.
		\]
		Multiplication by $q$ proves \eqref{eq:poisson-mean}.
		At boundary zeros, apply the comparison to $\log(|H|+\varepsilon)$
		and let $\varepsilon\downarrow0$; the logarithmic singularities are
		integrable.
	\end{proof}
	
	\section{Growth estimate for units}\label{sec:absorption}
	
	With the exponents $K_m$ fixed in Proposition~\ref{prop:wronskian}, put
	\begin{equation}\label{eq:recursive-radii}
		r_1=1,\qquad r_2=2-\sqrt3,\qquad
		\eta_m=\frac1{1024(K_m+m)}\ (m\ge2),\qquad
		r_m=\eta_m r_{m-1}\quad(m\ge3).
	\end{equation}
	The radii $r_m$ are positive and decrease with $m$. It is convenient
	to use the sharp value of $r_m$
	at $m=2$ which is proved in Proposition~\ref{prop:sharp-two-absorption} below;
	its proof uses only the auxiliary estimates of Section 2,
	and is independent of the contents of this Section 3. 
	
	\begin{prop}\label{thm:absorption}
		Let $m\ge1$. For every $n$, let $A_{1,n},\ldots,A_{m,n}$ be units on
		$\D$, and let $a_{1,n},\ldots,a_{m,n}\in\OO(\D)$. Suppose that
		\begin{equation}\label{eq:absorption-hypotheses}
			\frac{a_{i,n}}{A_{i,n}}\longrightarrow0\quad(1\le i\le m),
			\qquad
			s_n:=\sum_{i=1}^{m}a_{i,n}\longrightarrow s\not\equiv0
		\end{equation}
		locally uniformly on $\D$. There exist an index $i$ and a subsequence
		such that $1/A_{i,n}\to0$ locally uniformly on $D(r_m)$, where $r_m$
		is defined by \eqref{eq:recursive-radii}.
	\end{prop}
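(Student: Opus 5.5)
Induction on $m$, following the recursive definition \eqref{eq:recursive-radii}.

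\emph{Base cases.} For $m=1$ we have $a_{1,n}\to s\not\equiv0$ and $a_{1,n}/A_{1,n}\to0$. Near any point where $s\ne0$ this gives $1/A_{1,n}=(a_{1,n}/A_{1,n})/a_{1,n}\to0$ uniformly. Now $\log|1/A_{1,n}|$ is harmonic and bounded above on compacts, since $|1/A_{1,n}|\le|a_{1,n}/A_{1,n}|/|a_{1,n}|$ away from the isolated zeros of $s$ and the maximum principle handles those small circles. Harnack's inequality then propagates divergence to $-\infty$ from the open set $\{s\ne0\}$ to all of $\D$, which gives $r_1=1$. For $m=2$ we invoke Proposition~\ref{prop:sharp-two-absorption}, which the paper says is proved independently.

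\emph{Inductive step ($m\ge3$).} Put $u_{i,n}=-\log|A_{i,n}|$, which is harmonic. Pass to subsequences so that for each $i$ either $u_{i,n}\to-\infty$ or $u_{i,n}$ stays bounded above at some point. We must show that some $u_{i,n}\to+\infty$ locally uniformly on $D(r_m)$. Argue by contradiction: assume that for every $i$ there is a point $z_i\in D(\delta)$, with $\delta$ of order $\eta_m$, where $u_{i,n}(z_i)\le C_0$ along the subsequence. If instead some $u_{i,n}$ is large at every point of $D(\delta)$, Harnack's inequality (Lemma~\ref{lem:envelope}) forces $u_{i,n}\to\infty$ on a disk comparable to $D(\delta)$, and we are done.

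Under the assumption, Lemma~\ref{lem:envelope} bounds every $u_{i,n}$ on $D(4\eta_m)$ by $64\eta_m M_n+C_0$, where $M_n$ is the harmonic envelope at $0$ of $\max_i u_{i,n}$ on a circle $\rho\in[1/2,3/4]$. The radius $\rho$ is chosen by Lemma~\ref{lem:growth} so that growth from $\rho$ to $\rho+1/M$ is controlled. There are two cases.
\begin{enumerate}
\item If $M_n$ stays bounded, all $|A_{i,n}|$ are bounded below near $0$. Then, from $a_{i,n}/A_{i,n}\to0$ together with a normal-family argument, the $a_{i,n}$ are controlled, and we reach the contradiction $s_n\to 0$ near $0$ by arguing more carefully.
\item If $M_n\to\infty$, consider the $m$ functions $g_i=a_{i,n}$ normalized by $e^{CM_n}$. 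Their Wronskian equals $W(a_{1,n},\dots,a_{m,n})$, and since $\sum a_{i,n}=s_n$ it can be rewritten as $\prod_i a_{i,n}$ times a determinant of logarithmic derivatives. Lemma~\ref{lem:logderivative} bounds the proximity functions of these logarithmic derivatives by $O(\log M_n)$. Using $|a_{i,n}|\le o(1)|A_{i,n}|\le e^{64\eta_m M_n+C_0}$ on the small disk, together with Jensen (Lemma~\ref{lem:poisson-mean}) applied to $W$ and to $s$, we get that the normalized Wronskian is at most $e^{-cM_n}$ relative to the normalization. Proposition~\ref{prop:wronskian} with exponent $K_m$ then yields a unit-norm combination $\sum\lambda_i a_{i,n}$ that is tiny on the smaller disk. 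The choice $\eta_m=1/(1024(K_m+m))$ is exactly what makes $K_m\cdot64\eta_m M$ plus the losses beat the $M$ available.
\end{enumerate}

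In case (ii), the small combination gives a linear relation. Eliminating one index, say the one with largest $|\lambda_i|$, rewrites $s_n$ as a sum of $m-1$ terms $a'_{j}=a_{j,n}-(\lambda_j/\lambda_{i})a_{j,n}$ plus a negligible error. The new terms still satisfy $a'_j/A_{j,n}\to0$ on the disk of radius about $\eta_m$, possibly after replacing $A_{j,n}$ by $\min$-type units. Rescaling this disk to $\D$ and applying the induction hypothesis gives radius $\eta_m r_{m-1}=r_m$.

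The main obstacle is making the quantitative bookkeeping in case (ii) consistent. We must ensure that the smallness of the Wronskian produced by the logarithmic-derivative lemma genuinely beats the power $K_m$ loss in Proposition~\ref{prop:wronskian}. We must also ensure that the reduced system keeps the hypothesis $a'_j/A_j\to0$ after elimination. I would first try to handle the latter by choosing the eliminated index as the one with largest $|A_{i,n}|$ at a base point, and using Lemma~\ref{lem:envelope} to compare the $A$'s uniformly on $D(4\eta_m)$.
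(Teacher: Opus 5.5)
Your skeleton is the paper's argument in contrapositive form: small Wronskian $\Rightarrow$ small unit-norm combination by Proposition~\ref{prop:wronskian} $\Rightarrow$ eliminate the index of largest $|\lambda_i|$ $\Rightarrow$ induction on $D(\eta_m)$, giving $D(\eta_m r_{m-1})=D(r_m)$. The gap is the step you call ``the main obstacle''. It is the core of the proof, and the mechanism you propose cannot supply it.

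Factoring $W_n=W(a_{1,n},\ldots,a_{m,n})$ as $\prod_i a_{i,n}$ times a determinant of logarithmic derivatives only gives $\log|W_n|\le\sum_i u_{i,n}+E_n$ on $|z|=\rho$, with $u_{i,n}=\log|A_{i,n}|$ and $\mean{E_n}{\rho}=o(M)$. Hence $\log|W_n|\le\sum_iu_{i,n}+o(M)$ on $D(2\eta)$. This is a Hadamard-type bound that never uses $\sum_ia_{i,n}=s_n$.

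Rescale $D(4\eta)$ and normalize by $e^{L_n}$, $L_n=64\eta M+C_0$. Proposition~\ref{prop:wronskian} then reads $\Lambda_\eta(a_{1,n},\ldots,a_{m,n})^{K_m}\le Ce^{(K_m-m)L_n}\norm{W_n}{2\eta}$. Through this inequality the product bound is useless. Every $\alpha_i:=\sup_{D(2\eta)}|A_{i,n}|$ is at most $e^{L_n}$ and $K_m\ge m$, so $e^{(K_m-m)L_n}\prod_i\alpha_i\ge(\min_i\alpha_i)^{K_m}$. You therefore recover nothing beyond the trivial bound $\Lambda_\eta\le\min_i\alpha_i$, and applying Jensen to $s$ does not change this.

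The ``$M$ available'' comes from the idea you are missing. At each point of $|z|=\rho$, choose $j$ with $u_{j,n}$ maximal. By multilinearity, $W_n=\pm W(s_n,a_{1,n},\ldots,\widehat{a_{j,n}},\ldots,a_{m,n})$. Because $s_n$ and its first $m-1$ derivatives are uniformly bounded, you factor out only the remaining $m-1$ functions. This gives $\log|W_n|\le\sum_iu_{i,n}-\max_iu_{i,n}+E_n$, with $E_n$ independent of $j$. Averaging and using Lemma~\ref{lem:envelope}, $\mean{\log|W_n|}{\rho}\le\sum_iu_{i,n}(0)-M+o(M)\le(8m\delta-1)M+o(M)$.

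Now apply Lemma~\ref{lem:poisson-mean} with $q\le2$, $q^2-1\le20\eta$ and $m(\rho,W_n)\le(m-1)M+o(M)$. This gives $\log\norm{W_n}{2\eta}\le-M/3$ for large $n$, which beats $(K_m-m)L_n\le M/16+O(1)$. This is exactly where the choice $\eta_m=1/(1024(K_m+m))$ is used. The paper runs the same comparison by contradiction, bounding $\mean{\log|W_n|}{\rho}$ from below at a point where $|W_n|$ attains its norm on $\overline{D(2\eta)}$.

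Some smaller repairs are also needed.
\begin{enumerate}
\item Lemma~\ref{lem:logderivative} requires $\norm{a_{i,n}}{\eta}\ge\tau$, which you never provide. In the paper this comes from $\Lambda_\eta(a_{1,n},\ldots,a_{m,n})\ge\lambda>0$, which is precisely the complement of your elimination step. So you must split cases first: if $\Lambda_\eta\to0$ on a subsequence, eliminate and apply the induction hypothesis; otherwise take $\tau=\lambda$.
\item Your second ``obstacle'' is vacuous. Eliminating the index with the largest $|\lambda_i|$ makes the new coefficients $1-\lambda_j/\lambda_i$ of modulus at most $2$, so $a'_j/A_{j,n}\to0$ with the same units. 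Eliminating the index with the largest $|A_{i,n}|$ would lose this coefficient bound.
\item Case (i) cannot occur. Since $s_n(z_*)\to s(z_*)\ne0$ at some $z_*\in D(1/2)$, $\max_i|A_{i,n}(z_*)|\to\infty$, and hence $M_n\to\infty$.
\item Take $\delta=r_m$ exactly, and let the failure points $z_{i,n}$ depend on $n$, which Lemma~\ref{lem:envelope} allows.
\item Use $u_{i,n}=\log|A_{i,n}|$, not $-\log|A_{i,n}|$.
\end{enumerate}
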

	\begin{proof}
		We argue by induction on $m$.
		
		For $m=1$, let $K\Subset\D$ be arbitrary. Choose a concentric circle
		containing $K$ and avoiding the zeros of $s$. Since $a_{1,n}=s_n\to s$
		locally uniformly, $a_{1,n}$ is bounded away from zero on this circle
		for all sufficiently large $n$. Hence
		\begin{equation*}
			\frac1{A_{1,n}}=\frac{a_{1,n}/A_{1,n}}{a_{1,n}}\longrightarrow0,\quad
			n\to\infty
		\end{equation*}
		uniformly on the circle. The maximum principle for the holomorphic
		function $1/A_{1,n}$ then gives convergence uniformly on the enclosed
		disk. Since $K$ was arbitrary, the assertion follows for $r_1=1$.
		
		For $m=2$, the conclusion follows from
		Proposition~\ref{prop:sharp-two-absorption}, and therefore in particular
		holds on $D(r_2)$.
		
		Let $m\ge3$, assume the result for $m-1$, and abbreviate
		\begin{equation*}
			\eta=\eta_m,\qquad \delta=r_m=\eta r_{m-1}.
		\end{equation*}
		Assume for contradiction that there is no fixed index $i$ and no
		subsequence for which $1/A_{i,n}\to0$ locally uniformly on
		$D(\delta)$. Every further extraction below preserves this assumption.
		
		\smallskip\noindent\emph{A lower bound for linear combinations.}
		For each $i$, zero is not a subsequential limit of $1/A_{i,n}$ in the
		compact-open topology on $D(\delta)$. Hence there exist a compact set
		$K_i\Subset D(\delta)$ and a number $\varepsilon_i>0$ such that
		\begin{equation*}
			\sup_{K_i}|1/A_{i,n}|\ge\varepsilon_i
		\end{equation*}
		for all sufficiently large $n$. Otherwise, a diagonal selection along
		an exhaustion of $D(\delta)$ by compact disks would produce the
		forbidden convergence to zero. Then we
		may choose a common constant $C_0\ge0$ and points
		$z_{i,n}\in D(\delta)$ such that
		\begin{equation}\label{eq:failure-points}
			u_{i,n}(z_{i,n})\le C_0,
			\qquad u_{i,n}=\log|A_{i,n}|,\quad 1\leq i\leq m.
		\end{equation}
		The functions $u_{i,n}$ are harmonic because the $A_{i,n}$ are units.
		
		Consider the least norm of a linear combination whose coefficient
		vector has Euclidean norm one,
		\begin{equation*}
			\lambda_n=\Lambda_\eta(a_{1,n},\ldots,a_{m,n}).
		\end{equation*}
		We claim that there is a constant $\lambda>0$, independent of $n$, such
		that
		\begin{equation}\label{eq:absolute-gap}
			\lambda_n\ge\lambda
		\end{equation}
		for all sufficiently large $n$.
		
		If not, pass to a subsequence for which $\lambda_n\to0$ and choose
		minimizing unit vectors
		$\bm{c}_n=(c_{1,n},\ldots,c_{m,n})\in\C^m$. Put
		\begin{equation*}
			b_n=\sum_i c_{i,n}a_{i,n}.
		\end{equation*}
		After a further extraction of a subsequence, one fixed index $j$ satisfies
		\begin{equation*}
			|c_{j,n}|=\max_i|c_{i,n}|\ge m^{-1/2}
		\end{equation*}
		for every $n$. Since $\|b_n\|_\eta=\lambda_n\to0$, the functions $b_n$
		tend uniformly to zero on $\overline{D(\eta)}$, and
		\begin{equation*}
			\sum_{i\ne j}\left(1-\frac{c_{i,n}}{c_{j,n}}\right)a_{i,n}
			=s_n-\frac{b_n}{c_{j,n}}\longrightarrow s
		\end{equation*}
		locally uniformly on $D(\eta)$. The coefficients on the left have
		modulus at most two. Retaining the units $A_{i,n}$ for $i\ne j$ and
		rescaling $D(\eta)$ to $\D$, the induction hypothesis applies to these
		$m-1$ terms. It follows that, for some fixed $i\ne j$ and a
		subsequence,
		\begin{equation*}
			\frac1{A_{i,n}}\longrightarrow0
			\qquad\text{locally uniformly on }D(\eta r_{m-1})=D(\delta),
		\end{equation*}
		contrary to our standing assumption. This proves
		\eqref{eq:absolute-gap}. In particular,
		\begin{equation*}
			\norm{a_{i,n}}{\eta}\ge\lambda
			\qquad(1\le i\le m).
		\end{equation*}
		
		\smallskip\noindent\emph{A circle with controlled growth.}
		Discard finitely many terms so that
		$|a_{i,n}|\le|A_{i,n}|$ on $\overline{D(7/8)}$ for every $i$. For
		$0<t<1$, define
		\begin{equation*}
			M_n(t)=\mean{\max\{0,u_{1,n},\ldots,u_{m,n}\}}{t}.
		\end{equation*}
		This function is continuous and nondecreasing in $t$, since it is the
		circular mean of a subharmonic function. Moreover,
		\begin{equation}\label{eq:diverging-growth}
			M_n(1/2)\longrightarrow\infty.
		\end{equation}
		Indeed, choose $z_*\in D(1/2)$ with $s(z_*)\ne0$. From
		\eqref{eq:absorption-hypotheses},
		\begin{equation*}
			|s_n(z_*)|
			\le\sum_i\left|\frac{a_{i,n}}{A_{i,n}}(z_*)\right|
			|A_{i,n}(z_*)|.
		\end{equation*}
		Since $s_n(z_*)\to s(z_*)\ne0$, whereas
		$a_{i,n}/A_{i,n}\to0$, we obtain
		$\max_i|A_{i,n}(z_*)|\to\infty$. Poisson comparison for the
		nonnegative subharmonic function
		$\max\{0,u_{1,n},\ldots,u_{m,n}\}$ then proves
		\eqref{eq:diverging-growth}.
		
		Choose a fixed closed interval $[a,b]\subset(1/2,3/4)$ such that $s$
		has no zeros in the closed annulus $a\le|z|\le b$. Such an interval
		exists because $s\not\equiv0$. Hence $|s_n|$ is bounded below by a
		positive constant on this annulus for all sufficiently large $n$. By
		\eqref{eq:diverging-growth} and Lemma~\ref{lem:growth}, we may choose
		$\rho=\rho_n\in[a,b]$ and put
		\begin{equation}\label{eq:chosen-radii}
			M=M_n(\rho),\qquad
			\widehat\rho=\rho+M^{-1}<b,\qquad
			M_n(\widehat\rho)\le2M.
		\end{equation}
		Here and below $M\to\infty$ as $n\to\infty$.
		
		On $|z|=\rho$, the positive lower bound for $|s_n|$ and the uniform
		smallness of all $a_{i,n}/A_{i,n}$ imply
		$\max_i u_{i,n}>0$ for all sufficiently large $n$. Let $U_n$ be the
		harmonic extension of this boundary maximum. Then
		\begin{equation*}
			U_n>0,\qquad U_n(0)=M,\qquad U_n-u_{i,n}\ge0.
		\end{equation*}
		Applying Lemma~\ref{lem:envelope} with
		\eqref{eq:failure-points}, we obtain
		\begin{equation}\label{eq:small-central-values}
			u_{i,n}(0)\le8\delta M+C_0
		\end{equation}
		and
		\begin{equation}\label{eq:small-disk-growth}
			\norm{a_{i,n}}{4\eta}\le e^{L_n},
			\qquad L_n=64\eta M+C_0.
		\end{equation}
		
		\smallskip\noindent\emph{The Wronskian lower bound.}
		Normalize on the small disk by setting
		\begin{equation*}
			g_{i,n}(z)=e^{-L_n}a_{i,n}(4\eta z),\qquad z\in\D.
		\end{equation*}
		By \eqref{eq:small-disk-growth}, these functions have modulus at most
		one. By \eqref{eq:absolute-gap}, the minimum norm of a linear combination with coefficient vector of Euclidean norm one on $D(1/4)$ is
		at least $e^{-L_n}\lambda$. Put
		$W_n=W(a_{1,n},\ldots,a_{m,n})$. The exact scaling identity is
		\begin{equation*}
			W(g_{1,n},\ldots,g_{m,n})(z)
			=e^{-mL_n}(4\eta)^{m(m-1)/2}W_n(4\eta z).
		\end{equation*}
		Proposition~\ref{prop:wronskian} therefore gives
		\begin{equation*}
			\norm{W_n}{2\eta}
			\ge c_m(4\eta)^{-m(m-1)/2}
			\lambda^{K_m}e^{-(K_m-m)L_n}.
		\end{equation*}
		Choose $w_n\in\overline{D(2\eta)}$ at which $|W_n|$ attains this
		norm. Since $K_m\ge m$ and $L_n=64\eta M+C_0$, we obtain
		\begin{equation}\label{eq:wronskian-point-lower}
			\log|W_n(w_n)|\ge-64K_m\eta M-O(1).
		\end{equation}
		The $O(1)$ term may depend on the fixed data of the sequence, through
		$\lambda$ and $C_0$, but it is independent of $n$. In particular,
		$W_n\not\equiv0$.
		
		\smallskip\noindent\emph{The boundary estimate.}
		Apply Lemma~\ref{lem:logderivative} with interior radius $\eta$, fixed
		outer range $r_-=1/2$, $r_+=7/8$, lower bound $\lambda$, and the radii
		$\rho,\widehat\rho$ from \eqref{eq:chosen-radii}. Since
		$m(\widehat\rho,a_{i,n})\le M_n(\widehat\rho)\le2M$ and
		$\widehat\rho-\rho=M^{-1}$, we have
		\begin{equation}\label{eq:negligible-derivatives}
			m\left(\rho,\frac{a_{i,n}^{(k)}}{a_{i,n}}\right)
			=O(\log M)=o(M)
			\qquad(1\le i\le m,\ 1\le k\le m-1).
		\end{equation}
		
		For each $j$, multilinearity and $s_n=\sum_i a_{i,n}$ give
		\begin{equation*}
			W_n=\pm W(s_n,a_{1,n},\ldots,\widehat{a_{j,n}},\ldots,a_{m,n}),
		\end{equation*}
		where the hat denotes omission. Since $s_n\to s$ locally uniformly,
		the derivatives of $s_n$ through order $m-1$ are uniformly bounded on
		$\overline{D(3/4)}$. Factoring the remaining columns and expanding the
		determinant, we obtain a nonnegative function $E_n$ on $|z|=\rho$,
		independent of $j$, such that, almost everywhere,
		\begin{equation}\label{eq:wronskian-pointwise-upper}
			\log|W_n|\le\sum_{i\ne j}u_{i,n}+E_n,
			\qquad \mean{E_n}{\rho}=o(M).
		\end{equation}
		For instance, one may take a sufficiently large fixed constant plus
		\begin{equation*}
			\sum_i\sum_{k=1}^{m-1}
			\log^+\left|\frac{a_{i,n}^{(k)}}{a_{i,n}}\right|,
		\end{equation*}
		in view of \eqref{eq:negligible-derivatives}.
		
		At each boundary point choose $j$ so that $u_{j,n}$ is maximal, and
		integrate. Harmonicity gives
		\begin{equation}\label{eq:wronskian-mean-upper}
			I_n:=\mean{\log|W_n|}{\rho}
			\le\sum_i u_{i,n}(0)-M+o(M).
		\end{equation}
		For the positive part, put $v_n=\max_i u_{i,n}$ on $|z|=\rho$. Since
		$v_n>0$ and $E_n\ge0$, \eqref{eq:wronskian-pointwise-upper} implies
		\begin{equation}\label{eq:wronskian-positive-pointwise}
			\log^+|W_n|\le(m-1)v_n+E_n.
		\end{equation}
		almost everywhere on this circle. Averaging gives
		\begin{equation}\label{eq:wronskian-positive-mean}
			m(\rho,W_n)\le(m-1)M+o(M).
		\end{equation}
		The logarithmic singularities in \eqref{eq:wronskian-pointwise-upper}
		and \eqref{eq:wronskian-positive-pointwise} are integrable, so the means
		in \eqref{eq:wronskian-mean-upper} and
		\eqref{eq:wronskian-positive-mean} are well-defined.
		
		Apply Lemma~\ref{lem:poisson-mean} at $w_n$. With
		$t=|w_n|/\rho\le4\eta$ and $q=(1+t)/(1-t)$, the bound
		$\eta\le1/64$ implies
		\begin{equation*}
			q\le2,\qquad q^2-1\le20\eta.
		\end{equation*}
		Combining \eqref{eq:wronskian-point-lower},
		\eqref{eq:wronskian-positive-mean}, and
		\eqref{eq:poisson-mean}, we obtain
		\begin{equation*}
			I_n\ge-(128K_m+20m)\eta M-o(M).
		\end{equation*}
		On the other hand, \eqref{eq:small-central-values} and
		\eqref{eq:wronskian-mean-upper} give
		\begin{equation*}
			I_n\le(8m\delta-1)M+o(M).
		\end{equation*}
		Comparing the last two inequalities and using
		\eqref{eq:recursive-radii}, we obtain
		\[
		1\le 8m\delta+(128K_m+20m)\eta+o(1)
		\le\frac{128K_m+28m}{1024(K_m+m)}+o(1)
		\le\frac18+o(1),
		\]
		since $\delta\le\eta$. This is impossible as $n\to\infty$ and completes
		the induction.
	\end{proof}

	\medskip\noindent\textbf{Reduction under linear dependence.}
	
	Linear relations among the functions $a_{i,n}$ in
	Proposition~\ref{thm:absorption} allow a slight improvement of its conclusion:
	
	\begin{cor}\label{cor:rank-adaptive-absorption}
		Under the hypotheses of Proposition~\ref{thm:absorption}, suppose
		in addition that
		\[
		\dim_{\C}\operatorname{span}\{a_{1,n},\ldots,a_{m,n}\}\le d
		\quad\text{for every }n,
		\]
		where $d$ is an integer with $1\le d\le m$. Then some reciprocal $1/A_{i,n}$ tends to zero,
		along a subsequence, locally uniformly on $D(r_d)$, where $r_d$ is defined
		by \eqref{eq:recursive-radii}.
	\end{cor}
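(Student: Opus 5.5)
The idea is to reduce to Proposition~\ref{thm:absorption} with $m$ replaced by at most $d$ terms. We regroup the $a_{i,n}$ along a basis of their span, and keep the original units attached to the basis elements.

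First pass to a subsequence along which the dimension $d_n$ of the span is a constant $d'\le d$, and along which a fixed index set $J\subseteq\{1,\ldots,m\}$ with $|J|=d'$ gives a basis $\{a_{j,n}\}_{j\in J}$ for every $n$. This is possible because there are finitely many choices. Write
\[
a_{i,n}=\sum_{j\in J}\mu_{ij,n}a_{j,n},\qquad \mu_{jj',n}=\delta_{jj'}\ (j,j'\in J).
\]
The natural aim is then
\[
s_n=\sum_{j\in J}\Bigl(\sum_i\mu_{ij,n}\Bigr)a_{j,n}=\sum_{j\in J}\tilde a_{j,n},
\]
with the units $A_{j,n}$ ($j\in J$). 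Since $r_{d'}\ge r_d$, applying Proposition~\ref{thm:absorption} with $d'$ terms would then finish the proof.

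The obstacle is the hypothesis $\tilde a_{j,n}/A_{j,n}\to0$. This requires the coefficients $\mu_{ij,n}$ to stay bounded, and it fails in general. To fix this, I would choose the basis greedily, in the spirit of the minimizing step in the proof of Proposition~\ref{thm:absorption}. I want the coordinates of every $a_{i,n}$ to be bounded in a well-conditioned sense.

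Concretely, choose $J$ and a renormalization so that the basis is \emph{maximal-volume}. Among all $d'$-subsets, take $J$ maximizing the Gram determinant of $\{a_{j,n}\}$ in $L^2$ of a fixed circle, where these functions are linearly independent. Cramer's rule gives $|\mu_{ij,n}|\le1$, the usual property of a maximal-volume (Auerbach-type) basis. After a further subsequence, $J$ is fixed.

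Then
\[
\Bigl|\frac{\tilde a_{j,n}}{A_{j,n}}\Bigr|\le m\,\Bigl|\frac{a_{j,n}}{A_{j,n}}\Bigr|\to0.
\]
However, $\tilde a_{j,n}$ is a combination of the various $a_{i,n}$, each controlled by its own unit $A_{i,n}$, not by $A_{j,n}$. So I still need a bound on the coefficient multiplying $a_{j,n}$. Writing $\tilde a_{j,n}=\bigl(\sum_i\mu_{ij,n}\bigr)a_{j,n}$, with the bounded scalar $\sum_i\mu_{ij,n}$ (at most $m$), this holds automatically. So the estimate above is correct and the hypotheses of Proposition~\ref{thm:absorption} hold for $(\tilde a_{j,n},A_{j,n})_{j\in J}$, with $\sum_j\tilde a_{j,n}=s_n\to s\not\equiv0$.

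The conclusion then gives an index $j\in J$ and a subsequence with $1/A_{j,n}\to0$ on $D(r_{d'})\supseteq D(r_d)$, since the $r_m$ decrease.

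The main point to check carefully is the existence of a fixed index set $J$ with uniformly bounded coordinates $\mu_{ij,n}$. This rests on the maximal-determinant choice: the ratio of Gram determinants, computed via Cramer's rule, bounds each $|\mu_{ij,n}|$ by $1$. The degenerate case $d'=0$ cannot occur, because then $s_n\equiv0$.
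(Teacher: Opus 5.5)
Your proof is correct and is the paper's argument: maximize the determinant over $d'$-subsets, use Cramer's rule to get $|\mu_{ij,n}|\le1$, collapse onto $\tilde a_{j,n}=\bigl(\sum_i\mu_{ij,n}\bigr)a_{j,n}$, and apply Proposition~\ref{thm:absorption} with $d'\le d$ terms together with $r_{d'}\ge r_d$. Your Gram determinant on a circle is the special case of the paper's arbitrary isomorphism $T_n:V_n\to\C^q$ with $T_n$ isometric, and the worry in your middle paragraph is spurious, since $\tilde a_{j,n}$ is by definition a scalar multiple of $a_{j,n}$ alone.
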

	
\begin{proof}
    Since $s\not\equiv0$, the span
    $V_n=\operatorname{span}_{\C}\{a_{1,n},\ldots,a_{m,n}\}$ is nonzero
    for all sufficiently large $n$. After passage to a subsequence,
    its dimension is a fixed integer $q$, where $1\le q\le d$.
    Choose any linear isomorphism $T_n:V_n\to\C^q$.
    Among all $q$-element subsets $I_n\subset\{1,\ldots,m\}$, choose
    one maximizing
$
\left|\det\bigl(T_n a_{i,n}\bigr)_{i\in I_n}\right|,
$
    with the columns in increasing order of their indices. This maximum
    is nonzero, so the functions $a_{i,n}$, $i\in I_n$, form a basis.
    Write
    \[
        a_{j,n}=\sum_{i\in I_n}c_{ij,n}a_{i,n}.
    \]
    Replacing the column indexed by $i$ by $T_n a_{j,n}$ multiplies
    the determinant by $c_{ij,n}$. By maximality of the chosen
    determinant, $|c_{ij,n}|\le1$ for every $i\in I_n$ and every $j$.
    There are finitely many possible sets $I_n$, so after another
    extraction we may assume that $I_n=I$ is fixed. Put
    \[
        b_{i,n}=\left(\sum_{j=1}^m c_{ij,n}\right)a_{i,n}
        \qquad(i\in I).
    \]
    The coefficients in parentheses have modulus at most $m$. Hence
    \[
        \sum_{i\in I}b_{i,n}=s_n,\qquad
        \frac{b_{i,n}}{A_{i,n}}\longrightarrow0
    \]
    locally uniformly on $\D$. Apply Proposition~\ref{thm:absorption}
    to these $q$ terms and use $r_q\ge r_d$.
\end{proof}
	
	\section{Proof of Theorem~\ref{thm:main}}\label{sec:partition}
	
We derive Theorem~\ref{thm:main} from Proposition~\ref{thm:absorption}
by comparing quotients on finitely many concentric disks.

A \emph{preorder} is a reflexive and transitive relation $\preceq$.
It defines an equivalence relation by $i\sim j$ if both $i\preceq j$
and $j\preceq i$. An element $i$ is \emph{maximal} if
$i\preceq j$ implies $j\preceq i$ for every $j$; an equivalence class
is maximal if its elements are maximal. Two elements are
\emph{incomparable} if neither precedes the other.
\begin{lem}\label{lem:stabilization}
    Let $0<\sigma<1$, put $\rho_k=\sigma^k$ for $0\le k\le p-1$,
    and let $f_{1,n},\ldots,f_{p,n}$ be units on $\D$.
    After passage to a subsequence, for every $i,j,k$ the quotient
    $f_{i,n}/f_{j,n}$ either converges locally uniformly on $D(\rho_k)$
    to a holomorphic function, or satisfies
    \[
        \max_K|f_{i,n}/f_{j,n}|\longrightarrow\infty
    \]
    for some fixed compact set $K\Subset D(\rho_k)$.
    For this subsequence define
    \[
        i\preceq_k j\quad\Longleftrightarrow\quad
        (f_{i,n}/f_{j,n})_n\text{ is locally uniformly bounded on }D(\rho_k).
    \]
    We say that $j$ \emph{dominates} $i$ on $D(\rho_k)$ precisely when
    $i\preceq_k j$. This relation is a preorder. Write $i\sim_k j$
    when both $i\preceq_k j$ and $j\preceq_k i$.
    Then there is $\ell\in\{0,\ldots,p-1\}$ such that either
    $\preceq_\ell$ has just one maximal equivalence class, or
    $\ell<p-1$ and indices chosen from distinct maximal equivalence
    classes for $\preceq_\ell$ are incomparable for $\preceq_{\ell+1}$.
\end{lem}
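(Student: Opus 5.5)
We prove the two parts of Lemma~\ref{lem:stabilization} separately: first the dichotomy for the quotients, by normal families and a diagonal extraction, then the existence of $\ell$, by a counting argument on the number of maximal classes as the disks shrink.

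\emph{Extraction.} Fix an ordered pair $(i,j)$ and a radius $\rho_k$. The quotients $h_n=f_{i,n}/f_{j,n}$ are units, so $\log|h_n|$ is harmonic. Two cases arise.
\begin{enumerate}
\item Some subsequence of $(h_n)$ is locally uniformly bounded on $D(\rho_k)$. By Montel, a further subsequence converges locally uniformly there to a holomorphic function.
\item No subsequence is locally uniformly bounded. Take an exhaustion of $D(\rho_k)$ by disks $\overline{D(\rho_k-1/N)}$. For some $N$ the maxima $\max_K|h_n|$ over $K=\overline{D(\rho_k-1/N)}$ are unbounded along every subsequence; otherwise a diagonal choice would give a bounded subsequence. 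Passing to a subsequence, $\max_K|h_n|\to\infty$ on this fixed $K$.
\end{enumerate}
There are finitely many triples $(i,j,k)$, so successive extractions settle all of them at once. After that, every further subsequence keeps the same dichotomy.

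\emph{The preorder.} Reflexivity is trivial. Transitivity follows from $f_i/f_l=(f_i/f_j)(f_j/f_l)$, since a product of locally uniformly bounded sequences is locally uniformly bounded. Because each quotient either converges or blows up on a fixed compact set, "locally uniformly bounded" is the same as "not blowing up"; this is what makes $\preceq_k$ well defined. Moreover $\rho_{k+1}<\rho_k$, so boundedness on $D(\rho_k)$ implies boundedness on $D(\rho_{k+1})$. Hence $\preceq_k\subseteq\preceq_{k+1}$: the relations only get coarser as $k$ increases. In particular incomparability on $D(\rho_{k+1})$ implies incomparability on $D(\rho_k)$.

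\emph{Choice of $\ell$.} Let $N_k$ be the number of maximal equivalence classes of $\preceq_k$, so $1\le N_k\le p$. Suppose that for some $\ell<p-1$ the second alternative fails, that is, two indices $i,j$ from distinct maximal classes of $\preceq_\ell$ become comparable for $\preceq_{\ell+1}$, say $i\preceq_{\ell+1}j$. I claim $N_{\ell+1}<N_\ell$. To prove it, map each maximal class $Q$ of $\preceq_{\ell+1}$ to a maximal class of $\preceq_\ell$, as follows. Pick $x\in Q$ and choose $y$ maximal for $\preceq_\ell$ with $x\preceq_\ell y$, which exists since the set is finite. By coarsening, $x\preceq_{\ell+1}y$, so maximality of $x$ forces $y\in Q$. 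Thus $Q$ contains a $\preceq_\ell$-maximal element. Every $\preceq_\ell$-maximal element lies in some $\preceq_{\ell+1}$-maximal class, because coarsening only merges elements upward. So the assignment sends the $N_\ell$ old maximal classes onto the $N_{\ell+1}$ new ones.

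This surjection is not injective. The $\ell$-maximal classes of $i$ and $j$ both land in the class of $j$: if $j$'s class is not maximal for $\preceq_{\ell+1}$, then it goes up further, and $i$ goes up with it. Hence $N_{\ell+1}\le N_\ell-1$.

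Now iterate. Starting from $N_0\le p$, each failure for $\ell=0,1,\dots$ decreases $N$ by at least one. After at most $p-1$ steps we reach $\ell\le p-1$ with $N_\ell=1$, which is the first alternative.

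The main obstacle is this surjection/merging bookkeeping between maximal classes of $\preceq_\ell$ and $\preceq_{\ell+1}$; the rest is routine.
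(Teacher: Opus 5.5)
Your plan matches the paper's: Montel extraction, the coarsening $\preceq_k\subseteq\preceq_{k+1}$, and a count of maximal classes over the $p$ radii. However, the step you flag as the main obstacle rests on a false assertion. It is not true that every $\preceq_\ell$-maximal index lies in some $\preceq_{\ell+1}$-maximal class, because shrinking the disk can make a maximal index strictly dominated. Take $f_{1,n}(z)=e^{n(z-\rho_{\ell+1})}$ and $f_{j,n}\equiv1$ for $j\ge2$. On $D(\rho_\ell)$ the quotient $f_{1,n}/f_{2,n}$ blows up at real points of $(\rho_{\ell+1},\rho_\ell)$, and $f_{2,n}/f_{1,n}$ blows up at $0$. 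So $\{1\}$ and $\{2,\ldots,p\}$ are both maximal classes. On $D(\rho_{\ell+1})$ one has $|f_{1,n}/f_{2,n}|<1$, so $1\preceq_{\ell+1}2$, while $f_{2,n}/f_{1,n}$ still blows up at $0$; thus $1$ is no longer maximal. Consequently your ``surjection by containment'' from old maximal classes onto new ones is not defined. The non-injectivity step (``both land in the class of $j$'') is not justified either: $i\preceq_{\ell+1}j$ does not put $i$ into the class of $j$, even when $j$ stays maximal. Your own clause ``if $j$'s class is not maximal for $\preceq_{\ell+1}$'' already contradicts the preceding claim.

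The inequality $N_{\ell+1}\le N_\ell-1$ is still true, and the part you proved correctly is enough to get it. The new maximal classes are disjoint, and each contains an old maximal class (your argument plus $\sim_\ell\subseteq\sim_{\ell+1}$). So at least $N_{\ell+1}$ old maximal classes sit inside new maximal classes. If $i$ is not $\preceq_{\ell+1}$-maximal, then nothing in its $\sim_\ell$-class is, so that old class is not among them and $N_\ell\ge N_{\ell+1}+1$. If $i$ is $\preceq_{\ell+1}$-maximal, then $i\preceq_{\ell+1}j$ forces $j\preceq_{\ell+1}i$, so one new class contains two old ones, with the same conclusion. The paper runs this count contrapositively: $t_{k+1}\le t_k$ always, and equality forces each old maximal class into its own new maximal class, which gives incomparability by maximality. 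With this repair your descent over $\ell=0,\ldots,p-2$ is fine.

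A smaller point concerns case (ii) of your extraction. The claim that some $\overline{D(\rho_k-1/N)}$ carries maxima unbounded along \emph{every} subsequence is not justified by a diagonal argument. The bounded subsequences for different $N$ need not be compatible, and one can build units for which the claim fails. You do not need it: the current sequence is then itself not locally uniformly bounded, so some fixed compact set carries unbounded maxima, and a subsequence sends them to infinity, as in the paper.
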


\begin{proof}
    For each of the finitely many pairs $(i,j)$ and integers $k$,
    consider the current sequence of quotients on $D(\rho_k)$.
   If it is locally uniformly bounded, Montel's theorem
\cite[Ch.~V, Theorem~15]{Ahl79}
gives a subsequence converging locally uniformly to a holomorphic function.
    If it is not locally uniformly bounded, there is a fixed compact
    subset on which the suprema are unbounded; choose a subsequence
    along which these suprema tend to infinity. Both conclusions
    persist under further extraction, so finitely many selections
    give all the asserted alternatives simultaneously.

    Reflexivity of $\preceq_k$ follows from $f_{i,n}/f_{i,n}=1$,
    and transitivity follows by multiplying quotients. Restriction
    to a smaller disk gives
    \[
        i\preceq_k j\quad\Longrightarrow\quad i\preceq_{k+1}j.
    \]
    Let $t_k$ be the number of maximal equivalence classes for
    $\preceq_k$. Every maximal class for $\preceq_{k+1}$ contains a
    maximal class for $\preceq_k$. Indeed, given an index $x$ in the
    former, finiteness of the set of indices gives a maximal index
    $y$ for $\preceq_k$ with $x\preceq_k y$. Then
    $x\preceq_{k+1}y$, and maximality of $x$ for $\preceq_{k+1}$
    gives $y\preceq_{k+1}x$. The whole $\sim_k$-class of $y$ is
    therefore contained in the $\sim_{k+1}$-class of $x$.

    Distinct maximal classes for $\preceq_{k+1}$ are disjoint, so
    they contain distinct maximal classes for $\preceq_k$.
    Consequently $t_{k+1}\le t_k$. If $t_{k+1}=t_k$, each maximal
    class for $\preceq_{k+1}$ contains exactly one maximal class
    for $\preceq_k$, and every maximal class for $\preceq_k$ is
    contained in one of them. Thus indices belonging to two distinct
    old maximal classes lie in two distinct new maximal classes. Such
    indices are incomparable for $\preceq_{k+1}$:
    a relation in either direction would, by maximality, make the
    two new classes equal.

    If some $t_k=1$, take $\ell=k$. Otherwise the $p$ nonincreasing
    integers $t_0,\ldots,t_{p-1}$ all belong to $\{2,\ldots,p\}$.
    Hence $t_\ell=t_{\ell+1}$ for some $\ell<p-1$, which gives
    the second conclusion.
\end{proof}

We now prove Theorem~\ref{thm:main}.

\begin{proof}[Proof of Theorem~\ref{thm:main}]
    Set
    \[
        \sigma=r_{p-1},\qquad \varepsilon_p=\sigma^{p-1},
        \qquad \rho_k=\sigma^k\quad(0\le k\le p-1),
    \]
    and apply Lemma~\ref{lem:stabilization}. For the integer $\ell$
    given by the lemma, choose one index $k_\nu$ from each maximal
    equivalence class for $\preceq_\ell$, with $1\le\nu\le t$.
    Every index $j$ satisfies $j\preceq_\ell k_\nu$ for at least
    one $\nu$. Assign $j$ to one such $\nu$, and let $J_\nu$ be
    the set of indices assigned to $\nu$. If an index belongs to a
    maximal equivalence class, assign it to the same set as the chosen
    index $k_\nu$ from that class. Then
    \[
        \{1,\ldots,p\}=J_1\sqcup\cdots\sqcup J_t,
        \qquad k_\nu\in J_\nu,
        \qquad j\preceq_\ell k_\nu\quad(j\in J_\nu).
    \]
    The sets $J_\nu$ are the parts of this partition; they need not
    be equivalence classes for $\sim_\ell$.
    Put $F_{\nu,n}=f_{k_\nu,n}$. By the quotient convergence in
    Lemma~\ref{lem:stabilization},
    \[
        h_{\nu,n}:=\sum_{j\in J_\nu}\frac{f_{j,n}}{F_{\nu,n}}
        \longrightarrow h_\nu
    \]
    locally uniformly on $D(\rho_\ell)$, where $h_\nu$ is holomorphic.
    Moreover,
    \begin{equation}\label{eq:partition-identity}
        \sum_{\nu=1}^t h_{\nu,n}F_{\nu,n}=0.
    \end{equation}
    If $t=1$, the normalized sum is identically zero and the
    conclusion follows.

    Suppose that $t\ge2$ and that $h_\nu\not\equiv0$ for some $\nu$.
    Lemma~\ref{lem:stabilization} gives $\ell<p-1$ and, for every
    $\mu\ne\nu$, $k_\nu\not\preceq_{\ell+1}k_\mu$.
    Thus $(F_{\nu,n}/F_{\mu,n})_n$ is not locally uniformly bounded
    on $D(\rho_{\ell+1})$. The divergence alternative in that lemma
    gives a fixed compact set $K_\mu\Subset D(\rho_{\ell+1})$ such that
    \[
        L_{\mu,n}:=\max_{K_\mu}
        \left|\frac{F_{\nu,n}}{F_{\mu,n}}\right|\longrightarrow\infty.
    \]
    Put $B_n=(\min_{\mu\ne\nu}L_{\mu,n})^{1/2}$.
    Then $B_n\to\infty$ and $L_{\mu,n}/B_n\to\infty$ for every
    $\mu\ne\nu$. On $D(\rho_\ell)$ define
    \[
        A_{\mu,n}=B_n\frac{F_{\mu,n}}{F_{\nu,n}},\qquad
        a_{\mu,n}=h_{\mu,n}\frac{F_{\mu,n}}{F_{\nu,n}}
        \quad(\mu\ne\nu).
    \]
    These $A_{\mu,n}$ are units, and
    $a_{\mu,n}/A_{\mu,n}=h_{\mu,n}/B_n\to0$ locally uniformly.
    By \eqref{eq:partition-identity},
    \[
        \sum_{\mu\ne\nu}a_{\mu,n}
        =-h_{\nu,n}\longrightarrow-h_\nu\not\equiv0.
    \]
    Apply Proposition~\ref{thm:absorption} to these $t-1$ terms
    after rescaling $D(\rho_\ell)$ to $\D$. For some $\mu\ne\nu$,
    along a further subsequence,
    \[
        \frac1{A_{\mu,n}}=\frac{F_{\nu,n}}{B_nF_{\mu,n}}
        \longrightarrow0
        \quad\text{locally uniformly on }D(\rho_\ell r_{t-1}).
    \]
    Since $r_{t-1}\ge r_{p-1}=\sigma$, this disk contains
    $D(\rho_{\ell+1})$. This contradicts
    \[
        \max_{K_\mu}\left|\frac1{A_{\mu,n}}\right|
        =\frac{L_{\mu,n}}{B_n}\longrightarrow\infty.
    \]
    Therefore every $h_\nu$ vanishes identically. Each $J_\nu$ is a
    $C$-class on $D(\rho_\ell)$, with dominant index $k_\nu$, and hence
    also on $D(\varepsilon_p)$. Each contains at least two indices.
\end{proof}
	
	The conclusion is invariant under disk automorphisms.
	
	\begin{cor}\label{cor:centers}
		For every $z_0\in\D$, the sequence in Theorem~\ref{thm:main}
		has, after extraction, a partition into $C$-classes on
		\[
		\left\{z\in\D:
		\left|\frac{z-z_0}{1-\overline{z_0}z}\right|<\varepsilon_p\right\}.
		\]
		The partition and subsequence may depend on $z_0$.
	\end{cor}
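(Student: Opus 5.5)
The plan is to transport the sequence by a disk automorphism, apply Theorem~\ref{thm:main} to the transported sequence, and pull the resulting partition back.

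Let
\[
\phi(z)=\frac{z-z_0}{1-\overline{z_0}z}.
\]
This is an automorphism of $\D$ with $\phi(z_0)=0$, and its inverse is $\psi(w)=(w+z_0)/(1+\overline{z_0}w)$. For the given sequence of $p$-tuples of units $f_{j,n}$ satisfying \eqref{eq:zero-sum}, put $g_{j,n}=f_{j,n}\circ\psi$. Composition with a biholomorphic self-map of $\D$ preserves the property of being a unit, and it preserves the identity $\sum_j g_{j,n}=0$. So Theorem~\ref{thm:main} applies to $(g_{1,n},\ldots,g_{p,n})$. It gives a subsequence and a partition $\{1,\ldots,p\}=I_1\sqcup\cdots\sqcup I_q$ into $C$-classes on $D(\varepsilon_p)$, with dominant indices $k_1,\ldots,k_q$.

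Next we transfer the definition back. For $j\in I_\nu$ we have $g_{j,n}/g_{k_\nu,n}=(f_{j,n}/f_{k_\nu,n})\circ\psi$. The only point needing justification is that composition with the homeomorphism $\phi\colon\Omega\to D(\varepsilon_p)$, where $\Omega=\phi^{-1}(D(\varepsilon_p))$ is the set in the statement, preserves both local uniform boundedness and local uniform convergence to $0$. This holds because $\phi$ maps compact subsets of $\Omega$ onto compact subsets of $D(\varepsilon_p)$, and conversely. Consequently each quotient $f_{j,n}/f_{k_\nu,n}$ is locally uniformly bounded on $\Omega$, and
\[
\sum_{j\in I_\nu}\frac{f_{j,n}}{f_{k_\nu,n}}=\Bigl(\sum_{j\in I_\nu}\frac{g_{j,n}}{g_{k_\nu,n}}\Bigr)\circ\phi\longrightarrow0
\]
locally uniformly on $\Omega$. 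Hence each $I_\nu$ is a $C$-class on $\Omega$ with the same dominant index, and we obtain the required partition along the same subsequence.

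There is no real obstacle in this argument. The only subtlety is the one the statement itself records: the subsequence and the partition come from applying Theorem~\ref{thm:main} to the transported sequence, which depends on $z_0$. So no uniformity in $z_0$ is claimed, and this is consistent with the remark after Theorem~\ref{thm:main} that partitions on overlapping disks may differ.
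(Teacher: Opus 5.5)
Your proposal is correct and follows the same approach as the paper: apply Theorem~\ref{thm:main} to the functions composed with $w\mapsto(z_0+w)/(1+\overline{z_0}w)$ and pull the partition back via the inverse automorphism. You additionally check that local uniform boundedness and convergence transfer under this homeomorphism, which the paper leaves implicit.
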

	
	\begin{proof}
		Apply the theorem to the functions composed with
		$w\mapsto(z_0+w)/(1+\overline{z_0}w)$.
	\end{proof}

	\section{The sharp radius for five functions}\label{sec:sharp-five}
	
	Put $r_*=2-\sqrt3$. We use the hyperbolic distance $d_{\D}$ associated
	with the length element $2|dz|/(1-|z|^2)$. Thus
	\begin{equation}\label{eq:five-hyperbolic-radius}
		\operatorname{diam}_{\D}D(r)=4\operatorname{arctanh}r,
		\qquad 4\operatorname{arctanh}r_*=\log3.
	\end{equation}
	For a disk $D(\rho)$, its intrinsic hyperbolic distance will be denoted by
	$d_{D(\rho)}$.
	
	The harmonic functions used in the example below occur in
	\cite{Ere96b}. The extremal problem stated in \cite{Ere96b} for the
	harmonic functions discussed there was solved by Tamrazov
	\cite{Tam98}; see also the addition of April~24, 1996 in
	\cite{Ere96b}. We shall instead compare harmonic functions
	along the geodesic joining two prescribed points.
	
	\medskip\noindent\textbf{A comparison along a geodesic.}
	
	For real $x\in(-1,1)$ put
	\[
	P_x(\theta)=\frac{1-x^2}{1-2x\cos\theta+x^2}.
	\]
	We first prove an inequality.
	
\begin{lem}\label{lem:two-point-kernel}
    For every $0<q<r_*$ there is $\varepsilon_q>0$ such that,
    for every positive harmonic function $U$ on $\D$ and all real
    $t,s$ with $0\le t\le q$ and $|s|\le t$, we have
    \begin{equation}\label{eq:two-point-kernel}
        k_-(t,s)U(-t)+k_+(t,s)U(t)
        \ge(1+\varepsilon_q)U(s),
    \end{equation}
    where
    \[
        k_-(t,s)=\frac{(1-t)(1-s)}{(1+t)(1+s)},\qquad
        k_+(t,s)=\frac{(1-t)(1+s)}{(1+t)(1-s)}.
    \]
\end{lem}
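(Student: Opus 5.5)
The plan is to reduce the inequality to the extremal positive harmonic functions, which are the Poisson kernels, and then finish with a compactness argument. By the Herglotz representation, every positive harmonic $U$ on $\D$ can be written as $U(z)=\int \frac{1-|z|^2}{|e^{i\theta}-z|^2}\,d\mu(\theta)$ for some positive finite measure $\mu$ on the circle. For real $x$ this gives $U(x)=\int P_x(\theta)\,d\mu(\theta)$ in the notation above. Both sides of \eqref{eq:two-point-kernel} are linear in $U$, with nonnegative coefficients $k_\pm$. It therefore suffices to prove the pointwise kernel inequality
\[
F(t,s,\theta):=\frac{k_-(t,s)P_{-t}(\theta)+k_+(t,s)P_t(\theta)}{P_s(\theta)}\ \ge\ 1+\varepsilon_q
\]
for all $\theta$ and all $(t,s)$ with $0\le t\le q$ and $|s|\le t$. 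The inequality for $U$ then follows by integrating against $\mu$.

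The function $F$ is continuous and positive on the compact set $\{0\le t\le q,\ |s|\le t,\ \theta\in[0,2\pi]\}$. This uses $q<1$, so all Poisson kernels are bounded above and away from zero. Hence $F$ attains its minimum there, and it is enough to show $F>1$ strictly at every point; we then take $1+\varepsilon_q=\min F$. The degenerate corner is harmless: at $t=s=0$ we have $k_\pm=1$ and $F\equiv2$. The case $s=0$ shows where the threshold comes from. There $k_\pm=(1-t)/(1+t)$ and $P_0\equiv1$, and at $\theta=\pi/2$ we get $P_{\pm t}=(1-t^2)/(1+t^2)$. So $F=2(1-t)^2/(1+t^2)$, which equals $1$ exactly when $t^2-4t+1=0$, i.e.\ $t=2-\sqrt3=r_*$. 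This confirms the expected sharpness and tells us which configuration is critical.

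The main step is the algebraic verification that $F(t,s,\theta)>1$ whenever $0\le t<r_*$ and $|s|\le t$. Put $c=\cos\theta\in[-1,1]$. Each $P_x$ has the form $(1-x^2)/(1+x^2-2xc)$, with denominator linear in $c$ and positive. Multiplying $F>1$ by the three positive denominators turns it into $Q(c)>0$, where $Q$ is a polynomial of degree at most $2$ in $c$ whose coefficients are polynomials in $t$ and $s$. I would then:
\begin{enumerate}
\item use the symmetry $(s,\theta)\mapsto(-s,\pi-\theta)$, which swaps $k_-P_{-t}$ with $k_+P_t$, to reduce to $0\le s\le t$;
\item check the endpoints $c=\pm1$ and the vertex of the parabola $Q$ when it lies in $(-1,1)$, aiming to show the minimum over $c$ is an expression in $(t,s)$ that is minimized at $s=0$;
\item at $s=0$, reduce to the computation above, where positivity for $t<r_*$ is exactly $t^2-4t+1>0$.
\end{enumerate}
The case $s=t$ is immediate, since there $k_+=1$ and the left side exceeds $P_t$ by the positive term $k_-P_{-t}$.

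I expect the obstacle to be controlling the $s$-dependence in step (ii): showing that the worst case over $|s|\le t$ is $s=0$, or at least that the minimum over $c$ stays positive for all such $s$ when $t<r_*$. If the direct polynomial analysis becomes unwieldy, I would first try a substitution adapted to the disk automorphism moving $s$ to $0$. That map sends $\pm t$ to the points $(\pm t-s)/(1\mp ts)$ and turns the weights $k_\pm$ into the corresponding Harnack-type factors. This should transform the case $s\neq0$ into a case with $s=0$ and two asymmetric points, which can then be compared with the symmetric one.
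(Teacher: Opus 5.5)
\textbf{Verdict: there is a genuine gap.} Your reduction to the pointwise kernel inequality $F>1$ is correct: Herglotz representation, linearity in $U$ with $k_\pm\ge0$, then compactness to get $1+\varepsilon_q=\min F$. This is exactly the paper's framework, and your $s=0$ computation correctly locates $r_*$. But you never prove the only substantive step, $F(t,s,\theta)>1$ for $0<|s|<t<r_*$. The route you propose for it rests on a false claim.

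\textbf{The worst case is not $s=0$ when $t$ is small.} At $s=t$ and $\theta=0$ (so $c=1$) one has $k_+=1$, and both $k_-$ and $P_{-t}(0)/P_t(0)$ equal $\bigl(\frac{1-t}{1+t}\bigr)^2$. Hence
\[
F(t,t,0)=1+\Bigl(\frac{1-t}{1+t}\Bigr)^4=2-8t+O(t^2),
\qquad
\min_\theta F(t,0,\theta)=\frac{2(1-t)^2}{1+t^2}=2-4t+O(t^2).
\]
For $t=1/5$ these values are $97/81<16/13$. The same ordering persists after clearing denominators into $Q$.

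So step (ii) cannot be carried out as planned. Your automorphism fallback fails for the same reason. After moving $s$ to $0$, the symmetric configuration is not the extremal one for small $t$; the maximally asymmetric one, $s=\pm t$, is. What remains, that $\min_c Q>0$ for all admissible $(t,s)$, is the lemma itself, now a genuinely two-parameter positivity problem.

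\textbf{The missing idea is to eliminate $t$ rather than $s$.} Write
\[
k_\mp P_{\mp t}=\frac{1\mp s}{1\pm s}\cdot\frac{(1-t)^2}{1\pm2tc+t^2},
\qquad
\frac{d}{dt}\,\frac{(1-t)^2}{1\pm2tc+t^2}=-\frac{2(1-t^2)(1\pm c)}{(1\pm2tc+t^2)^2}.
\]
All $t$-dependence sits in factors that are nonincreasing in $t$, and for each $c$ at least one is strictly decreasing. The $s$-dependent factors do not involve $t$. So for $t<r_*$ the left-hand kernel strictly exceeds its value at $t=r_*$ with the same $s$.

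At $t=r_*$, the relations $1+r_*^2=4r_*$ and $(1-r_*)^2=2r_*$ turn $\frac{1-t}{1+t}P_{\mp t}$ into $\frac1{2\pm c}$. The paper then uses the identity
\[
\frac{1-s}{(1+s)(2+c)}+\frac{1+s}{(1-s)(2-c)}-P_s
=\frac{\bigl((1+s^2)c-2s\bigr)^2+12s^2(1-c^2)}{(1-s^2)(4-c^2)(1-2sc+s^2)}\ge0,
\]
which holds for all $|s|<1$. Thus the constraint $|s|\le t$ is needed only for compactness. This gives $F>1$ on the whole parameter set, and your compactness argument then finishes the proof.
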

	
	\begin{proof}
		By the Poisson representation of a positive harmonic function, it
		suffices to compare the kernels. Write $c=\cos\theta$. Since
		$1+r_*^2=4r_*$, we have
		\[
		\frac{1-r_*}{1+r_*}P_{-r_*}(\theta)=\frac1{2+c},
		\qquad
		\frac{1-r_*}{1+r_*}P_{r_*}(\theta)=\frac1{2-c}.
		\]
		A simple computation gives
		\begin{equation}\label{eq:kernel-factorization}
			\begin{split}
				&\frac{1-s}{(1+s)(2+c)}
				+\frac{1+s}{(1-s)(2-c)}
				-\frac{1-s^2}{1-2sc+s^2}\\
				&\qquad=
				\frac{\bigl((1+s^2)c-2s\bigr)^2+12s^2(1-c^2)}
				{(1-s^2)(4-c^2)(1-2sc+s^2)}
				\ge0.
			\end{split}
		\end{equation}
		For fixed $c\in[-1,1]$,
		\[
		\frac{d}{dt}\frac{(1-t)^2}{1\mp2tc+t^2}
		=-\frac{2(1-t^2)(1\mp c)}{(1\mp2tc+t^2)^2}.
		\]
		Both derivatives are nonpositive, and at least one is negative.
		Consequently, replacing $r_*$ in the first two kernels by any
		$t<r_*$ makes the difference in \eqref{eq:kernel-factorization}
		strictly positive. After division by $P_s(\theta)$, this difference
		has a positive minimum on
		\[
		\{(t,s,c):0\le t\le q,\ |s|\le t,\ -1\le c\le1\}.
		\]
		Integration against the positive measure representing $U$ proves
		\eqref{eq:two-point-kernel}.
	\end{proof}
	
	The next formulation makes the role of hyperbolic distance explicit.
	
	\begin{cor}\label{cor:geodesic-comparison}
		Fix $0<d_0<\log3$. There is $\varepsilon>0$, depending only on $d_0$,
		with the following property. Let $U,P,Q$ be nonnegative harmonic
		functions on $\D$, with $U>0$, and let $\alpha,\beta\in\D$ satisfy
		$d_{\D}(\alpha,\beta)\le d_0$. If $C_0\ge0$ and
		\[
		Q(\alpha)\ge U(\alpha)-C_0,
		\qquad
		P(\beta)\ge U(\beta)-C_0,
		\]
		then, on the hyperbolic geodesic segment $\Gamma$ joining $\alpha$ and $\beta$
		we have
		\begin{equation}\label{eq:geodesic-negative}
			U(z)-P(z)-Q(z)\le-\varepsilon U(z)+2C_0.
		\end{equation}
	\end{cor}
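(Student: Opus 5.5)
We reduce to Lemma~\ref{lem:two-point-kernel} by a disk automorphism. The hypotheses and the conclusion are invariant under composition with automorphisms of $\D$: harmonicity and nonnegativity are preserved, $d_{\D}$ is preserved, and geodesic segments go to geodesic segments. We may therefore move $\alpha,\beta$ to symmetric real points $-t$ and $t$ with $0\le t$. Then $d_{\D}(-t,t)=4\operatorname{arctanh}t\le d_0<\log3$, so $t\le q:=\tanh(d_0/4)<r_*$ by \eqref{eq:five-hyperbolic-radius}. Let $\varepsilon_q$ be given by Lemma~\ref{lem:two-point-kernel}. The geodesic $\Gamma$ is the real segment $[-t,t]$, and every $z\in\Gamma$ is a real $s$ with $|s|\le t$. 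Which of $\alpha,\beta$ goes to $-t$ is immaterial, since the roles of $P$ and $Q$ are symmetric; say $\alpha\mapsto -t$ and $\beta\mapsto t$.

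Next we apply Harnack-type comparison to the nonnegative harmonic functions $Q$ and $P$. Harnack's inequality, in its Poisson-kernel form for a nonnegative harmonic function on $\D$, gives for real points
\[
Q(s)\ge \frac{1-|s+t|/|1+ts|}{1+|s+t|/|1+ts|}\,Q(-t).
\]
The ratio, for $-t\le s$, equals $\frac{(1-t)(1-s)}{(1+t)(1+s)}=k_-(t,s)$. Indeed, the Poisson kernel bound along the real axis gives exactly $Q(s)/Q(-t)\ge \frac{1+s}{1-s}\cdot\frac{1-t}{1+t}$ or its reciprocal, depending on direction. I will check which orientation yields $k_-$; the lemma's normalisation is designed so that $Q(s)\ge k_-(t,s)Q(-t)$ and $P(s)\ge k_+(t,s)P(t)$. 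This is the step to verify carefully. The safest route is to compare the kernels $P_{-t}(\theta)$ and $P_s(\theta)$ pointwise in $\theta$, and take the minimum of their ratio over $\theta$.

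Now we combine the pieces. Using $Q(-t)\ge U(-t)-C_0$ and $P(t)\ge U(t)-C_0$, and noting that $k_\pm\le 1$ for $|s|\le t$ (which is easily checked from the formulas), we get
\[
P(s)+Q(s)\ge k_-U(-t)+k_+U(t)-2C_0\ge(1+\varepsilon_q)U(s)-2C_0
\]
by \eqref{eq:two-point-kernel}, applied to the positive harmonic function $U$. Rearranging gives \eqref{eq:geodesic-negative} with $\varepsilon=\varepsilon_q$, which depends only on $d_0$.

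The main obstacle is the bookkeeping in the Harnack step: we need the one-dimensional Harnack constants to be exactly $k_\mp$, with the correct orientation, and we need $k_\pm\le1$ so that the $C_0$ terms contribute at most $2C_0$. If the orientation turns out to be reversed, we would swap the assignment of $\alpha,\beta$ to $\mp t$ accordingly.
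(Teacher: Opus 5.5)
Your proposal is correct and follows the paper's proof. You move the endpoints to $\pm t$ by a disk automorphism, use Harnack to get $Q(s)\ge k_-(t,s)Q(-t)$ and $P(s)\ge k_+(t,s)P(t)$, use $0\le k_\pm\le 1$ to absorb the $C_0$ terms, and then invoke Lemma~\ref{lem:two-point-kernel} with $q=\tanh(d_0/4)<r_*$.

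The orientation you flagged is already settled by your first display. Since $(1+ts)\mp(s+t)=(1\mp s)(1\mp t)$, that display is exactly $k_-(t,s)$, consistent with $k_-(t,-t)=1$. The expression in your ``Indeed'' sentence is in fact $k_+(t,s)$, the constant for comparison with $t$, and ``its reciprocal'' is not a valid lower bound, so that sentence should be dropped.
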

	
	\begin{proof}
		An automorphism of $\D$ takes the endpoints of the segment
		to $-t,t$, with
		$4\operatorname{arctanh}t=d_{\D}(\alpha,\beta)$. Put
		$q=\tanh(d_0/4)<r_*$. On the resulting real segment, Harnack's
		inequality gives
		\[
		Q(s)\ge k_-(t,s)Q(-t),
		\qquad
		P(s)\ge k_+(t,s)P(t).
		\]
		Both coefficients belong to $[0,1]$. Adding these inequalities and
		applying Lemma~\ref{lem:two-point-kernel} proves
		\eqref{eq:geodesic-negative}, with $\varepsilon=\varepsilon_q$.
		Composition with the inverse automorphism returns to $\Gamma$.
		Coincident endpoints are included by taking $t=0$.
	\end{proof}
	
	\medskip\noindent\textbf{The estimate for two terms.}
	
	The following is the more precise version of
	Proposition~\ref{thm:absorption} for the case $m=2$.
	
	\begin{prop}\label{prop:sharp-two-absorption}
		Let $A_n,B_n$ be units on $\D$, and let $a_n,b_n\in\OO(\D)$ satisfy
		\begin{equation*}
			a_n/A_n\longrightarrow0,\qquad
			b_n/B_n\longrightarrow0,\qquad
			s_n:=a_n+b_n\longrightarrow s\not\equiv0
		\end{equation*}
		locally uniformly on $\D$. Let $\Omega$ be a nonempty open subset
		of $\D$ with $\operatorname{diam}_{\D}\Omega\le\log3$.
		After passage to a subsequence, either $1/A_n\to0$ or $1/B_n\to0$
		locally uniformly on $\Omega$.
	\end{prop}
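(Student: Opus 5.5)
The plan is to argue by contradiction. I will show that, if neither alternative holds, then $b_n/s_n$ is almost constant along a path joining two points of $\Omega$, and yet it is close to $1$ at one end and close to $0$ at the other. The tools are Corollary~\ref{cor:geodesic-comparison}, an upper bound for the Wronskian obtained from Lemma~\ref{lem:logderivative}, and integration of $(b_n/s_n)'=W(s_n,b_n)/s_n^2$.

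\emph{Step 1: reduction to fixed endpoints.} Suppose that no subsequence gives either conclusion. Exactly as at the start of the proof of Proposition~\ref{thm:absorption}, after extraction there are compact sets $K_A,K_B\Subset\Omega$, a constant $C_0$, and points $\alpha_n\in K_A$, $\beta_n\in K_B$ with $\log|A_n(\alpha_n)|\le C_0$ and $\log|B_n(\beta_n)|\le C_0$. Pass to a further subsequence so that $\alpha_n\to\alpha\in\Omega$ and $\beta_n\to\beta\in\Omega$.

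The compact set $K=K_A\cup K_B\cup\{\alpha,\beta\}\subset\Omega$ satisfies $\diam_\D K<\log 3$. Indeed, if $z,w\in K$ realized distance $\log3$, then prolonging the geodesic through $z$ and $w$ slightly beyond $w$ would stay inside the open set $\Omega$ and produce a distance larger than $\log3$.

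Fix $d_0$ with $\diam_\D K<d_0<\log3$, and choose $R<1$ so close to $1$ that, after rescaling $D(R)$ to $\D$, all hyperbolic distances between points of $K$ and nearby points remain below $d_0$. Let $\varepsilon=\varepsilon(d_0)$ be given by Corollary~\ref{cor:geodesic-comparison}.

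\emph{Step 2: the harmonic comparison.} Put $u_n=\log|A_n|$ and $v_n=\log|B_n|$. Choose a radius $\rho_n$ near $R$ by Lemma~\ref{lem:growth}, as in \eqref{eq:chosen-radii}, with $M\to\infty$; the divergence $M\to\infty$ holds by the same argument as \eqref{eq:diverging-growth}. Let $U_n$ be the harmonic extension of $\max\{u_n,v_n\}$ from $|z|=\rho_n$, and set
\[
P=U_n-u_n\ge0,\qquad Q=U_n-v_n\ge0.
\]
The hypotheses of Corollary~\ref{cor:geodesic-comparison} hold at $\alpha_n$ and $\beta_n$. Hence on the geodesic $\Gamma_n$ from $\alpha_n$ to $\beta_n$ we have
\[
u_n+v_n-U_n\le-\varepsilon U_n+2C_0.
\]
By Harnack, $U_n\asymp M$ on $K$. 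Since the coefficients in Lemma~\ref{lem:two-point-kernel} vary continuously, the inequality persists with $\varepsilon/2$ on a fixed neighbourhood of the limiting geodesic $\Gamma$ from $\alpha$ to $\beta$. Inside that neighbourhood I replace $\Gamma$ by a fixed path $\gamma$ from $\alpha$ to $\beta$ that avoids the finitely many zeros of $s$ nearby. I then join $\alpha_n$ to $\alpha$ and $\beta_n$ to $\beta$ by short segments chosen so that $s$ has no zeros on them or on $\gamma$. This step looks routine but fiddly.

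\emph{Step 3: the Wronskian estimate.} By multilinearity, $W_n:=W(a_n,b_n)=W(s_n,b_n)=-W(s_n,a_n)$. Factoring out $a_n$, respectively $b_n$, and using Lemma~\ref{lem:logderivative} with \eqref{eq:chosen-radii}, I get on $|z|=\rho_n$
\[
\log|W_n|\le\min\{u_n,v_n\}+E_n,\qquad \mean{E_n}{\rho_n}=o(M).
\]
Here $\min\{u_n,v_n\}=u_n+v_n-\max\{u_n,v_n\}$ on the circle, so subharmonicity of $\log|W_n|$ gives, inside the disk,
\[
\log|W_n|\le u_n+v_n-U_n+\widetilde E_n,
\]
where $\widetilde E_n$ is the Poisson integral of $E_n$. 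By Harnack, $\widetilde E_n=o(M)$ uniformly on compact subsets. Combined with Step~2, this yields $|W_n|\le e^{-\varepsilon M/4}$ on the path.

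\emph{Step 4: the contradiction.} On the path, $|s_n|$ is bounded below, so
\[
\Bigl|\bigl(b_n/s_n\bigr)'\Bigr|=\frac{|W_n|}{|s_n|^2}\to0
\]
uniformly there. Integrating along the path of bounded length, $b_n/s_n$ has oscillation $o(1)$ on it. At $\alpha_n$, $|a_n|=|a_n/A_n|\,|A_n|\le e^{C_0}\,o(1)$, so $b_n/s_n\to1$. At $\beta_n$ the same argument gives $b_n/s_n\to0$. This is a contradiction.

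The main obstacle is Step~3. The bound $\mean{E_n}{\rho_n}=o(M)$ must be made uniform, and $E_n$ must be transported from the circle into the interior while keeping the exact cancellation $\min=u+v-\max$. A second, smaller difficulty is keeping the perturbed path inside the region where Corollary~\ref{cor:geodesic-comparison} still gives a negative coefficient.
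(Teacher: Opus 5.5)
\textbf{Gap: the endpoints may sit at zeros of $s$.} Your outline follows the paper's proof closely. It uses the same failure points, the same bound $\diam_\D K<\log3$ by prolonging a geodesic, a growth-lemma radius, the harmonic majorant $U_n$ of $\max\{u_n,v_n\}$, and Corollary~\ref{cor:geodesic-comparison}. It also transports $\log|W_n|\le\min\{u_n,v_n\}+E_n$ inside via $\min=u_n+v_n-\max$ and integrates $W_n/s_n^2$. Your continuity-of-kernels extension to a neighbourhood is a valid substitute for the paper's Harnack tube argument.

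Nothing prevents the failure points $\alpha_n,\beta_n$, or their limits $\alpha,\beta$, from being zeros of $s$. If $s(\alpha)=0$, then $s_n(\alpha_n)\to0$, and $a_n(\alpha_n)\to0$ no longer gives $b_n(\alpha_n)/s_n(\alpha_n)\to1$, so Step~4 fails. No path starting at $\alpha$ can avoid the zeros of $s$, so the lower bound for $|s_n|$ along the path also fails. Step~3 is affected too. Lemma~\ref{lem:logderivative} needs fixed lower bounds $\norm{a_n}{r}\ge c$ and $\norm{b_n}{r}\ge c$, which you never establish. The natural source, $a_n(\beta_n)=s_n(\beta_n)-b_n(\beta_n)$ and the analogous identity at $\alpha_n$, again needs $|s_n|$ bounded below at the endpoints.

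\textbf{The missing idea: relocate the failure points first.} Take a neighbourhood $V$ of $K$ with $\overline V\subset\Omega$ compact and $\diam_\D\overline V<\log3$; your slack $d_0$ allows this. Surround the finitely many zeros of $s$ in $K$ by small disjoint closed disks inside $V$, with no zeros of $s$ on their boundaries. If $\alpha_n$ lies in such a disk, the maximum principle for the holomorphic function $1/A_n$ gives a point on the boundary circle with $|A_n|\le e^{C_0}$. Treat $\beta_n$ the same way using $1/B_n$. The new points lie in a fixed compact set $K'\subset V\setminus\{s=0\}$, where $|s_n|\ge c_0>0$ eventually. This yields the endpoint limits, the bounds $\norm{a_n}{r},\norm{b_n}{r}\ge c_0/2$, and limit points $\alpha,\beta$ with $s(\alpha)s(\beta)\ne0$, so your fixed-path construction goes through.

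\textbf{Smaller point.} Choose $\rho_n$ in an annulus free of zeros of $s$. Then $\max\{u_n,v_n\}>0$ on $|z|=\rho_n$, giving $U_n>0$ as the corollary requires. Your labels $P,Q$ are swapped relative to its hypotheses, which is harmless by symmetry.
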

	
	\begin{proof}
		Suppose that neither reciprocal tends to $0$ on any subsequence.
		After discarding finitely many terms, there are compact sets
		$K_A,K_B\Subset\Omega$, a constant $C_0\ge0$, and points
		$\alpha_n\in K_A$, $\beta_n\in K_B$ such that
		\begin{equation}\label{eq:two-failure-points}
			u_n(\alpha_n)\le C_0,\qquad v_n(\beta_n)\le C_0,
			\qquad u_n=\log|A_n|,\quad v_n=\log|B_n|.
		\end{equation}
		Put $K=K_A\cup K_B$. Since $K\Subset\Omega$, there is $\eta>0$
		such that the closed hyperbolic $\eta$-neighborhood of $K$ is contained
		in $\Omega$. Hence
		\[
		\operatorname{diam}_{\D}K<\log3.
		\]
		Indeed, if $x,y\in K$ satisfied $d_{\D}(x,y)=\log3$, we could extend
		the hyperbolic geodesic from $x$ through $y$ by a distance smaller than
		$\eta$, obtaining a point of $\Omega$ at distance greater than $\log3$
		from $x$. Choose a neighborhood $V$ of $K$ with compact closure in
		$\Omega$ and
		\[
		\operatorname{diam}_{\D}\overline V<\log3.
		\]
		Surround the finitely many zeros of $s$ in $K$ by pairwise disjoint
		closed disks contained in $V$, with no zero of $s$ on their
		boundaries. If $\alpha_n$ lies inside one of these disks, the maximum
		modulus principle for $1/A_n$ permits us to move it to the boundary
		while preserving \eqref{eq:two-failure-points}. Move $\beta_n$ in
		the same way, using $1/B_n$. The resulting points lie in a fixed
		compact set
		\[
		K'\subset V\setminus\{s=0\}.
		\]
		Thus $|s_n|\ge c_0>0$ on $K'$ for all sufficiently large $n$.
		Since $a_n/A_n\to0$ locally uniformly and
		$|A_n(\alpha_n)|\le e^{C_0}$ by \eqref{eq:two-failure-points},
		we have $a_n(\alpha_n)/s_n(\alpha_n)\to0$. Similarly,
		$b_n(\beta_n)/s_n(\beta_n)\to0$, and therefore
		$a_n(\beta_n)/s_n(\beta_n)=1-b_n(\beta_n)/s_n(\beta_n)\to1$.
		Thus
		\begin{equation}\label{eq:endpoint-change}
			\frac{a_n(\alpha_n)}{s_n(\alpha_n)}\longrightarrow0,
			\qquad
			\frac{a_n(\beta_n)}{s_n(\beta_n)}\longrightarrow1.
		\end{equation}
		
		Choose $r<1$ with $K'\subset D(r)$. On $K'\times K'$, the distances
		$d_{D(\rho)}$ converge uniformly to $d_{\D}$ as $\rho\uparrow1$.
		We can therefore choose
		\[
		r<\rho_0<\rho_1<1,\qquad 0<d_1<\log3,
		\]
		so that $s$ has no zero on the closed annulus
		$\rho_0\le|z|\le\rho_1$ and
		\begin{equation}\label{eq:two-endpoint-distances}
			d_{D(\rho)}(\zeta,\xi)\le d_1
			\qquad(\zeta,\xi\in K',\ \rho_0\le\rho<1).
		\end{equation}
		Here we use monotonicity of hyperbolic distance under inclusion of
		disks. The condition concerning the annulus can be arranged after
		increasing $\rho_0$, since the zeros of $s$ are isolated.
		
		Evaluating $a_n=s_n-b_n$ at $\beta_n$ and
		$b_n=s_n-a_n$ at $\alpha_n$ gives
		\begin{equation}\label{eq:two-interior-norms}
			\norm{a_n}{r}\ge c_0/2,\qquad
			\norm{b_n}{r}\ge c_0/2
		\end{equation}
		eventually. Define
		\[
		M_n(t)=\mean{\max(0,u_n,v_n)}{t},\qquad
		\epsilon_n=\max_{|z|\le\rho_1}
		\max\{|a_n/A_n|,|b_n/B_n|\}.
		\]
		On the chosen annulus, for some fixed $c_1>0$,
		\[
		c_1\le |s_n|
		\le 2\epsilon_n e^{\max(u_n,v_n)}
		\]
		for all large $n$. Hence $\max(u_n,v_n)\to\infty$ uniformly there,
		and $M_n(\rho_0)\to\infty$. Lemma~\ref{lem:growth} provides radii
		\[
		\rho_n\in[\rho_0,\rho_1),\qquad
		M=M_n(\rho_n),\qquad
		\widehat\rho_n=\rho_n+M^{-1}<\rho_1,\qquad
		M_n(\widehat\rho_n)\le2M.
		\]
		In the rest of the proof $M$ depends on $n$ and tends to infinity.
		Eventually $|a_n|\le|A_n|$ and $|b_n|\le|B_n|$ on
		$\overline{D(\rho_1)}$. Lemma~\ref{lem:logderivative}, applied with
		\eqref{eq:two-interior-norms}, therefore yields
		\begin{equation}\label{eq:two-log-errors}
			m(\rho_n,a_n'/a_n)+m(\rho_n,b_n'/b_n)=O(\log M).
		\end{equation}
		
		Let $U_n,H_n$ be the harmonic extensions in $D(\rho_n)$ of the
		boundary functions $\max(u_n,v_n)$ and $\min(u_n,v_n)$, respectively.
		Then
		\[
		U_n>0,\qquad U_n(0)=M,\qquad H_n=u_n+v_n-U_n.
		\]
		Both $P_n=U_n-v_n$ and $Q_n=U_n-u_n$ are nonnegative harmonic
		functions. Let $\Gamma_n$ be the hyperbolic geodesic segment in
		$D(\rho_n)$ joining $\alpha_n$ to $\beta_n$.
		Apply Corollary~\ref{cor:geodesic-comparison} after rescaling the disk,
		using \eqref{eq:two-failure-points} and
		\eqref{eq:two-endpoint-distances}. We obtain a fixed $\varepsilon>0$
		such that
		\begin{equation}\label{eq:path-negative-majorant}
			H_n(z)\le-\varepsilon U_n(z)+2C_0
			\qquad(z\in\Gamma_n).
		\end{equation}
		A centered hyperbolic ball is geodesically convex, so
		$\Gamma_n\subset\overline{D(r)}$. Its hyperbolic length is at most
		$d_1$, and its Euclidean length is at most $\rho_n d_1/2$.
		Harnack's inequality and \eqref{eq:path-negative-majorant} give fixed
		constants $c,C>0$ such that, for all large $n$,
		\[
		cM\le U_n\le CM\quad\hbox{on }\overline{D(r)},
		\qquad H_n\le-cM\quad\hbox{on }\Gamma_n.
		\]
		
		We extend the last estimate to a neighborhood of fixed width.
		Set $V_n=U_n-H_n=P_n+Q_n\ge0$. Uniformly for
		$w\in\overline{D(r)}$ and $\rho_n\ge\rho_0$, Harnack's inequality
		gives a factor $q_\delta\downarrow1$ as $\delta\downarrow0$ such that
		\[
		U_n(z)\le q_\delta U_n(w),\qquad
		V_n(z)\ge q_\delta^{-1}V_n(w)
		\qquad(|z-w|\le\delta).
		\]
		For $w\in\Gamma_n$, it follows that
		\[
		H_n(z)
		\le(q_\delta-q_\delta^{-1})CM-q_\delta^{-1}cM.
		\]
		Fix $0<\delta<\rho_0-r$ sufficiently small. For some $c_2>0$,
		\begin{equation}\label{eq:two-tubular-negativity}
			H_n(z)\le-c_2M
			\qquad\bigl(\operatorname{dist}(z,\Gamma_n)\le\delta\bigr)
		\end{equation}
		for all large $n$, where $\operatorname{dist}$ denotes Euclidean
		distance.
		
		Consider the holomorphic function
		\[
		W_n=s_na_n'-s_n'a_n
		=b_na_n'-a_nb_n'
		=-s_nb_n'+s_n'b_n.
		\]
		The functions $s_n,s_n'$ are uniformly bounded on
		$\overline{D(\rho_1)}$. The first and last expressions for $W_n$
		give, almost everywhere on $|z|=\rho_n$,
		\[
		\log|W_n|\le\min(u_n,v_n)+E_n,
		\qquad
		E_n=C_1+\log^+|a_n'/a_n|+\log^+|b_n'/b_n|\ge0,
		\]
		where $C_1$ is fixed. The singularities of $E_n$ are integrable,
		and its mean is $O(\log M)$ by \eqref{eq:two-log-errors}.
		Poisson comparison, with kernels uniformly bounded on
		$\overline{D(r+\delta)}$, gives
		\[
		\log|W_n(z)|\le H_n(z)+O(\log M)
		\qquad(|z|\le r+\delta).
		\]
		When $W_n\equiv0$, this inequality is understood in the extended
		sense. Together with \eqref{eq:two-tubular-negativity}, it implies
		\begin{equation*}
			\sup_{\operatorname{dist}(z,\Gamma_n)\le\delta}|W_n(z)|
			\longrightarrow0.
		\end{equation*}
		
		Choose $R_1\in(r,r+\delta)$ so that $s$ has no zero on $|z|=R_1$.
		Enclose all its zeros in $D(R_1)$ in pairwise disjoint closed disks
		contained in $D(R_1)$, disjoint from $K'$, with diameters less than
		$\delta$ and no zeros on their boundaries. Replace the portions of
		$\Gamma_n$ inside these disks by boundary arcs. The resulting paths
		$\widetilde\Gamma_n$ have the same endpoints and stay within distance
		$\delta$ of $\Gamma_n$. Their lengths are uniformly bounded: the
		supporting circle or line of the geodesic meets each disk boundary
		in at most two points, so each replacement adds at most one
		circumference. All the paths lie in the fixed compact set obtained
		from $\overline{D(R_1)}$ by removing the interiors of these disks.
		This set has no zero of $s$, and therefore $|s_n|\ge c_3>0$ there
		eventually. Integration of $(a_n/s_n)'=W_n/s_n^2$ now gives
		\[
		\left|
		\frac{a_n(\beta_n)}{s_n(\beta_n)}
		-\frac{a_n(\alpha_n)}{s_n(\alpha_n)}
		\right|
		\le
		\operatorname{length}(\widetilde\Gamma_n)c_3^{-2}
		\sup_{\widetilde\Gamma_n}|W_n|
		\longrightarrow0.
		\]
		This contradicts \eqref{eq:endpoint-change}.
	\end{proof}
	
	The proof uses only two failure points in fixed compact subsets of
	$\Omega$ and harmonic comparison in the ambient disk; the joining
	geodesic need not lie in $\Omega$.
	In particular, Proposition~\ref{prop:sharp-two-absorption} applies
	on $D(r_*)$ by \eqref{eq:five-hyperbolic-radius}. The example below
	will show that neither this radius nor the diameter $\log3$ can be
	increased, even when $s_n=1$.
	
	\medskip\noindent\textbf{The sharp example.}
	
	We refine the construction of \cite[Section~4]{Ere96b}.
	Define
	\begin{equation*}
		\Phi(z)=\frac{2z}{\sqrt{1-z^4}},
		\qquad
		L(z)=\frac{1-z^2}{1+z^2}-\frac{1-z}{2(1+z)},
		\qquad u=\Rea L.
	\end{equation*}
	The square root is the holomorphic branch equal to one at zero,
	which exists because $1-z^4$ has no zeros in $\D$. Put
	$v=-\Rea(\Phi^2)$. A calculation gives
	\begin{equation}\label{eq:five-majorant-identity}
		u-v=\frac12\Rea\frac{1+z}{1-z}>0.
	\end{equation}
	Also $u(z)>0$ for $\Rea z\ge0$. Indeed, if $z=x+iy$ and $s=|z|^2$,
	then
	\[
	u(z)=(1-s)
	\left\{\frac{1+s}{|1+z^2|^2}
	-\frac1{2|1+z|^2}\right\}.
	\]
	Multiplication of the expression inside the braces by
	$2|1+z^2|^2|1+z|^2$ gives
	\[
	(1+s)^2+4x(1+s)+4y^2>0
	\qquad(x\ge0).
	\]
	
	The real parts of $\Phi(z)$ and $z$ have the same sign. To see this,
	use
	\[
	\Ima\frac{w}{1-w^2}
	=\frac{(1+|w|^2)\Ima w}{|1-w^2|^2}
	\]
	with $w=z^2$. In the right half-disk, a zero of $\Rea\Phi$ would force
	$\Phi^2$ to be real and nonpositive. The displayed identity rules
	this out off the real axis, and $\Phi$ is positive on the positive
	real axis. Connectedness gives the sign throughout the right
	half-disk. Oddness treats the left half-disk, and $\Phi$ is purely
	imaginary on the imaginary axis.
	
	For positive integers $n$, let
	\[
	G_n(w)=\frac12+\sqrt{\frac n\pi}
	\int_0^w e^{-n\zeta^2}\dd\zeta.
	\]
	The integral defines an entire function, and
	$G_n(w)+G_n(-w)=1$. For $w=x+iy$ with $x\le0$, shifting the
	integration contour to a horizontal line gives
	\[
	|G_n(w)|
	\le \sqrt{\frac n\pi}e^{ny^2}
	\int_{-\infty}^{x}e^{-nt^2}\dd t
	\le\frac12 e^{-n\Rea(w^2)}.
	\]
	The vertical segment at real part $-T$ contributes zero as
	$T\to\infty$. For the last inequality, write $t=x-\tau$,
	$\tau\ge0$, and use $(x-\tau)^2\ge x^2+\tau^2$.
	Symmetry gives, for $x\ge0$,
	\[
	|G_n(w)|\le1+\frac12e^{-n\Rea(w^2)}.
	\]
	Set
	\[
	a_n=G_n\circ\Phi,\qquad b_n=1-a_n,\qquad
	A_n=2n e^{nL},\qquad B_n(z)=A_n(-z).
	\]
	Since $\Phi$ is odd, $b_n(z)=a_n(-z)$. Its sign property, the
	positivity of $u$ in the right half-disk, and
	\eqref{eq:five-majorant-identity} imply
	\[
	|a_n(z)|\le\frac32e^{nu(z)},\qquad
	|b_n(z)|\le\frac32e^{nu(-z)}.
	\]
	Consequently,
	\begin{equation}\label{eq:five-uniform-ratios}
		\sup_{\D}|a_n/A_n|\le\frac3{4n},\qquad
		\sup_{\D}|b_n/B_n|\le\frac3{4n}.
	\end{equation}
	Thus the five functions
	\begin{equation}\label{eq:five-counterexample}
		A_n,\qquad B_n,\qquad a_n-A_n,\qquad b_n-B_n,\qquad -1
	\end{equation}
	are units with zero sum for every $n\ge1$.
	
	At zero, $A_n(0)=B_n(0)=2ne^{n/2}\to\infty$. On the negative real
	axis,
	\[
	L(-t)=\frac{(1+t)(1-4t+t^2)}{2(1-t)(1+t^2)}
	\qquad(0<t<1).
	\]
	This expression is negative precisely for $t>r_*$. Hence, for every
	$r_*<R\le1$, choosing $r_*<t<R$ gives
	\begin{equation}\label{eq:five-vanishing-points}
		A_n(-t)\longrightarrow0,\qquad B_n(t)\longrightarrow0.
	\end{equation}
	
	We now prove Theorem~\ref{thm:sharp-five} and the optimality assertions
	following it.
	
	\begin{proof}[Proof of Theorem~\ref{thm:sharp-five}]
		Apply Cartan's theorem to the given sequence of five units.
		After extraction, either all indices form a $C$-class, or there are
		two disjoint $C$-classes. In the latter case, if these classes do not
		cover all five indices, both have size two. Relabel and divide all
		functions by $-f_{5,n}$ so that $f_{5,n}=-1$ and the two classes are
		$\{1,3\}$ and $\{2,4\}$, with dominant indices $1$ and $2$, respectively.
		This reduction is the one used in
		\cite{Ere96b}.
		
		Put
		\[
		A_n=f_{1,n},\qquad B_n=f_{2,n},\qquad
		a_n=f_{1,n}+f_{3,n},\qquad b_n=f_{2,n}+f_{4,n}.
		\]
		The definition of $C$-classes gives $a_n/A_n\to0$ and $b_n/B_n\to0$, while
		$a_n+b_n=1$. Proposition~\ref{prop:sharp-two-absorption} applies on
		$\Omega$. Whichever reciprocal tends to zero allows the 
		constant function $f_{5,n}$
		to be added to its corresponding pair: its quotient tends
		to zero, and so does the normalized sum of the enlarged class.
		The two resulting classes form the required partition.
		
		To prove optimality, fix $r_*<R\le1$ and consider
		\eqref{eq:five-counterexample}. The first four functions tend in
		modulus to infinity at zero. By
		\eqref{eq:five-uniform-ratios} and
		\eqref{eq:five-vanishing-points}, each tends to zero at one of the
		points $-t,t$ in $D(R)$. A class containing the constant $-1$ cannot
		have one of these four indices as a dominant index, since the
		quotient of the constant would be unbounded at that point. Nor can
		the index of the constant be a dominant index of a class containing any of the other
		functions, since their quotients are unbounded at zero. The
		singleton consisting of the constant is not a $C$-class. These
		obstructions persist under every extraction, so no subsequence has
		a $C$-class partition on $D(R)$.
		
		By \eqref{eq:five-hyperbolic-radius}, the positive assertion on
		$D(r_*)$ and these examples give $R_5=2-\sqrt3$. For any proposed
		universal diameter larger than $\log3$, choose $R>r_*$ sufficiently
		close to $r_*$ that $4\operatorname{arctanh}R$ is smaller than that
		diameter. The same example on $\Omega=D(R)$ proves that $\log3$
		is optimal.
	\end{proof}
	
	\section{Zero directions of the Kobayashi--Royden pseudometric}
	\label{sec:geometry}
	
	We apply Theorem~\ref{thm:main} to holomorphic functions on the unit
	disk obtained by pulling back the monomials of a Laurent polynomial.
	The point is that a map from $\D$ to $(\C^*)^N$ has zero-free
	coordinate functions, so both positive and negative powers are
	holomorphic on $\D$.
	
	\begin{proof}[Proof of Theorem~\ref{thm:torus-zero}]
		Let $Y\subset(\C^*)^N$ be as in Theorem~\ref{thm:torus-zero}.
		If $\gamma_{x,\bm{v}}(\C)\subset Y$, the discs
		$z\mapsto\gamma_{x,\bm{v}}(nz)$ show that $\kappa_Y(x;\bm{v})=0$.
		Conversely, suppose that $\kappa_Y(x;\bm{v})=0$. There is nothing to prove
		when $\bm{v}=0$. Otherwise choose $\bm F_n\in\OO(\D,Y)$ such that
		\[
		\bm F_n(0)=x,\qquad \bm F_n'(0)=t_n\bm{v},\qquad t_n\to\infty.
		\]
		Write $\bm F_n=(F_{1,n},\ldots,F_{N,n})$.
		
		Choose finitely many nonzero Laurent polynomials defining $Y$ in the
		torus; for $Y=(\C^*)^N$ there are no equations to check. Consider one
		of them, written with distinct exponent vectors as
		\[
		P(\bm{z})=\sum_{\bm{\alpha}\in E}c_{\bm{\alpha}}\bm{z}^{\bm{\alpha}},
		\qquad c_{\bm{\alpha}}\ne0,\qquad E\subset\mathbb Z^N,
		\]
		where
		\[
		\bm{z}^{\bm{\alpha}}=\prod_{j=1}^Nz_j^{\alpha_j},\qquad
		\bm F_n^{\bm{\alpha}}:=\prod_{j=1}^NF_{j,n}^{\alpha_j}.
		\]
		Every $F_{j,n}$ is a unit on $\D$, so $\bm F_n^{\bm{\alpha}}$ is a
		unit even when some $\alpha_j$ are negative. Since
		$P(\bm F_n(z))=0$ for every $z\in\D$, the functions
		\[
		f_{\bm{\alpha},n}:=c_{\bm{\alpha}}\bm F_n^{\bm{\alpha}},
		\qquad \bm{\alpha}\in E,
		\]
		are units on $\D$ and their sum is identically zero.
		If $|E|=1$, this is impossible because a single nonzero monomial is a
		unit. If $|E|=2$, the quotient of the two terms is identically $-1$,
		so the two indices already form a $C$-class. Thus only the case
		$|E|\ge3$ requires Theorem~\ref{thm:main}.
		
		Let $I\subset E$ be one of the resulting $C$-classes, and let
		$\bm{\beta}\in I$ be a dominant index. Evaluating its normalized sum at
		$0$ and using $\bm F_n(0)=x$ gives
		\[
		\sum_{\bm{\alpha}\in I}c_{\bm{\alpha}}x^{\bm{\alpha}}=0.
		\]
		The quotients $f_{\bm{\alpha},n}/f_{\bm{\beta},n}$ are locally bounded
		on a fixed disk about zero. Cauchy's estimate applied to these quotients
		at $0$ gives
		\[
		t_n\frac{c_{\bm{\alpha}}x^{\bm{\alpha}}}
		{c_{\bm{\beta}}x^{\bm{\beta}}}
		(\bm{\alpha}-\bm{\beta})\cdot\bm{\lambda}=O(1),
		\qquad \bm{\alpha}\in I,
		\]
		where $\bm{\alpha}\cdot\bm{\lambda}=\sum_{j=1}^N\alpha_j\lambda_j$ and
		$\lambda_j=v_j/x_j$. Since $t_n\to\infty$, it follows that
		$(\bm{\alpha}-\bm{\beta})\cdot\bm{\lambda}=0$ for every
		$\bm{\alpha}\in I$. Hence $\bm{\alpha}\cdot\bm{\lambda}$ has a common
		value $\nu_I$ on $I$, and
		\[
		P(\gamma_{x,\bm{v}}(\zeta))
		=\sum_I e^{\nu_I\zeta}
		\sum_{\bm{\alpha}\in I}c_{\bm{\alpha}}x^{\bm{\alpha}}=0.
		\]
		Repeating this argument for the finitely many defining polynomials
		proves that $\gamma_{x,\bm{v}}$ takes its values in $Y$.
	\end{proof}

	The same argument also controls sequences of tangent vectors with
	varying base points. Write
	\[
	\mathcal N_Y=\{(x,\bm{v})\in TY:\kappa_Y(x;\bm{v})=0\}.
	\]
	
	\begin{prop}\label{prop:torus-null-set}
		For every smooth closed algebraic subvariety $Y\subset(\C^*)^N$,
		the set $\mathcal N_Y$ is a closed algebraic subset of $TY$. For
		every compact set $K\subset TY\setminus\mathcal N_Y$,
		\[
		\inf_{(x,\bm{v})\in K}\kappa_Y(x;\bm{v})>0.
		\]
	\end{prop}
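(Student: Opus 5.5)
By Theorem~\ref{thm:torus-zero}, $\mathcal N_Y$ is exactly the set of $(x,\bm v)\in TY$ with $\gamma_{x,\bm v}(\C)\subset Y$. We will first show that this set is algebraic, and then prove the lower bound by a compactness argument that reuses the proof of Theorem~\ref{thm:torus-zero}.

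\emph{Algebraicity.} Fix defining Laurent polynomials $P=\sum_{\bm\alpha\in E}c_{\bm\alpha}\bm z^{\bm\alpha}$ of $Y$. With $\lambda_j=v_j/x_j$ we have
\[
P(\gamma_{x,\bm v}(\zeta))=\sum_{\bm\alpha\in E}c_{\bm\alpha}x^{\bm\alpha}e^{(\bm\alpha\cdot\bm\lambda)\zeta}.
\]
Exponentials with distinct exponents are linearly independent. Hence this vanishes identically if and only if there is a partition $E=\bigsqcup I$ such that $\bm\lambda$ satisfies $(\bm\alpha-\bm\beta)\cdot\bm\lambda=0$ for $\bm\alpha,\bm\beta$ in the same part and $\sum_{\bm\alpha\in I}c_{\bm\alpha}x^{\bm\alpha}=0$ for each part. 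One may drop the requirement that distinct parts have distinct exponents: merging parts with equal values of $\bm\alpha\cdot\bm\lambda$ keeps both conditions, and the condition is still sufficient.

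For each of the finitely many partitions of each $E$, these conditions are polynomial in $(x,\bm v)$ after clearing the units $x_j$, since $\lambda_j=v_j/x_j$. So $\mathcal N_Y$ is a finite intersection, over the defining polynomials, of finite unions of algebraic subsets of $TY$. It is therefore closed and algebraic.

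\emph{Lower bound.} Suppose the infimum over a compact $K\subset TY\setminus\mathcal N_Y$ is zero. Then there are $(x_n,\bm v_n)\in K$ with $\kappa_Y(x_n;\bm v_n)\to0$. Pass to a subsequence with $(x_n,\bm v_n)\to(x,\bm v)\in K$; then $\bm v\neq0$, because $\bm v=0$ would put the point in $\mathcal N_Y$. Choose $\bm F_n\in\OO(\D,Y)$ with
\[
\bm F_n(0)=x_n,\qquad \bm F_n'(0)=t_n\bm v_n,\qquad t_n\to\infty.
\]
For each defining polynomial, apply Theorem~\ref{thm:main} to the units $f_{\bm\alpha,n}=c_{\bm\alpha}\bm F_n^{\bm\alpha}$. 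When $|E|\le2$, handle the sum directly as before. This yields, along a subsequence, a partition of $E$ into $C$-classes on $D(\varepsilon_p)$ with fixed dominant indices.

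Evaluating each normalized sum at $0$ gives
\[
\sum_{\bm\alpha\in I}c_{\bm\alpha}x_n^{\bm\alpha}/(c_{\bm\beta}x_n^{\bm\beta})\to0,
\]
so in the limit $\sum_{\bm\alpha\in I}c_{\bm\alpha}x^{\bm\alpha}=0$. Cauchy's estimate for the bounded quotients at $0$ gives
\[
t_n\,\frac{c_{\bm\alpha}x_n^{\bm\alpha}}{c_{\bm\beta}x_n^{\bm\beta}}\,(\bm\alpha-\bm\beta)\cdot\bm\lambda_n=O(1).
\]
Since $t_n\to\infty$ and the prefactors converge to nonzero limits, $(\bm\alpha-\bm\beta)\cdot\bm\lambda=0$ in the limit. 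By the algebraic characterization above, $(x,\bm v)\in\mathcal N_Y$, which contradicts $(x,\bm v)\in K$.

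The main obstacle is this second step. We must check that the $C$-class conclusion passes to the limit when the base points $x_n$ vary. This works because all constants enter only through the values at $0$, which converge. The compactness of $K$ inside the torus keeps the $x_n^{\bm\alpha}$ bounded away from $0$ and $\infty$.
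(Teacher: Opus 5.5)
Your proposal is correct and matches the paper's proof. The lower bound is the same compactness argument: Theorem~\ref{thm:main} is applied to $c_{\bm{\alpha}}\bm F_n^{\bm{\alpha}}$, the normalized sums are evaluated at $0$, and Cauchy estimates for the quotients force $(\bm{\alpha}-\bm{\beta})\cdot\bm{\lambda}=0$ in the limit. The only variation is in the algebraicity step. You write $\{P\circ\gamma_{x,\bm{v}}\equiv0\}$ as a finite union, over partitions of $E$, of linear and Laurent conditions. The paper instead uses the single system $\sum_{\bm{\alpha}\in E}c_{\bm{\alpha}}x^{\bm{\alpha}}(\bm{\alpha}\cdot\bm{\lambda})^k=0$, $0\le k<|E|$, obtained from Taylor coefficients at $0$ and a Vandermonde argument. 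Both descriptions are valid and give a Zariski closed set.
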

	
	\begin{proof}
		Consider a defining Laurent polynomial
		$P(\bm{z})=\sum_{\bm{\alpha}\in E}c_{\bm{\alpha}} \bm{z}^{\bm{\alpha}}$, and put $q=|E|$.
		For $\lambda_j=v_j/x_j$, the identity
		$P\circ\gamma_{x,\bm{v}}=0$ is equivalent to the finite set of equations
		\[
		\sum_{\bm{\alpha}\in E}c_{\bm{\alpha}} x^{\bm{\alpha}}(\bm{\alpha}\cdot\bm{\lambda})^k=0,
		\qquad 0\le k<q.
		\]
		Indeed, group the terms by the distinct values of
		$\bm{\alpha}\cdot\bm{\lambda}$. If there are $d$ such values, the first $d$
		equations form a Vandermonde system for the grouped coefficients.
		They force every coefficient to vanish, which proves the identity;
		the reverse implication follows by differentiation at zero.
		These are regular algebraic equations in $(x,\bm{v})$ on the tangent
		bundle of the torus. Apply them to each of the finitely many defining
		polynomials and use Theorem~\ref{thm:torus-zero}. This proves the
		assertion about $\mathcal N_Y$.
		
		Suppose, contrary to the second assertion of the proposition, that
		for some compact set $K\subset TY\setminus\mathcal N_Y$ one has
		$\inf_K\kappa_Y=0$. Then there are $(x_n,\bm{v}_n)\in K$ converging,
		after passage to a subsequence, to $(x,\bm{v})\in K$, and discs
		$\bm F_n\in\OO(\D,Y)$ with
		\[
		\bm F_n(0)=x_n,\qquad \bm F_n'(0)=t_n\bm{v}_n,\qquad t_n\to\infty.
		\]
		For each defining polynomial apply Theorem~\ref{thm:main} to
		$c_{\bm{\alpha}}\bm F_n^{\bm{\alpha}}$, with the cases of one or two
		terms treated as above, and extract successively for the finitely many
		polynomials. In each $C$-class $I$, choose a dominant index
		$\bm{\beta}$. Then
		evaluation of the normalized sum gives
		\[
		\sum_{\bm{\alpha}\in I}c_{\bm{\alpha}} x^{\bm{\alpha}}=0.
		\]
		Put $\lambda_{j,n}=v_{j,n}/x_{j,n}$, so that
		$\bm{\lambda}_n\to\bm{\lambda}=(v_j/x_j)_{j=1}^N$. Bounded quotient
		derivatives give
		\[
		t_n\frac{c_{\bm{\alpha}} x_n^{\bm{\alpha}}}{c_{\bm{\beta}} x_n^{\bm{\beta}}}
		(\bm{\alpha}-\bm{\beta})\cdot\bm{\lambda}_n=O(1),
		\qquad \bm{\alpha}\in I.
		\]
		The fraction converges to a nonzero number. Dividing by $t_n$ and
		passing to the limit therefore yields
		$(\bm{\alpha}-\bm{\beta})\cdot\bm{\lambda}=0$. The proof of
		Theorem~\ref{thm:torus-zero} now gives
		$\gamma_{x,\bm{v}}(\C)\subset Y$, contrary to
		$(x,\bm{v})\notin\mathcal N_Y$.
	\end{proof}
	
	We next specialize to projective space with hyperplanes omitted.
	Following the symmetric coordinates of Kiernan and Kobayashi
	\cite[pp.~208--209]{KK73}, write
	\[
	H_p=\left\{[x_1:\cdots:x_p]\in\mathbb P^{p-1}:
	\sum_{j=1}^p x_j=0\right\},
	\qquad
	X_p=H_p\setminus\bigcup_{j=1}^p\{x_j=0\},
	\qquad p\ge3.
	\]
	Thus $H_p\simeq\mathbb P^{p-2}$, and the omitted hyperplanes are in
	general position. A tuple of units with zero sum defines a
	holomorphic map to $X_p$. Conversely, the global affine chart
	$x_p=1$ represents every holomorphic map to $X_p$ by such a tuple.
	
	For a nonempty subset $I\subset\{1,\ldots,p\}$, define
	\[
	\pi_I:X_p\longrightarrow\mathbb P^{|I|-1},\qquad
	\pi_I([x])=[x_j]_{j\in I},
	\qquad
	H_I=\left\{[w_j]_{j\in I}:\sum_{j\in I}w_j=0\right\}.
	\]
	When $|I|=1$, the set $H_I$ is empty. For a partition
	$\mathcal P=\{I_1,\ldots,I_q\}$ of $\{1,\ldots,p\}$, put
	\[
	\Pi_{\mathcal P}=(\pi_{I_1},\ldots,\pi_{I_q}),
	\qquad Y_{\mathcal P}=\prod_{\nu=1}^qH_{I_\nu}.
	\]
	
	\begin{prop}\label{prop:projective-equivalence}
		Fix $p\ge3$ and $0<R\le1$. The $C$-class partition property on
		$D(R)$ is equivalent to the following assertion: every sequence of
		holomorphic maps $F_n:\D\to X_p$ has a subsequence and a partition
		$\mathcal P$ such that $\Pi_{\mathcal P}\circ F_n$ converges locally
		uniformly on $D(R)$ to a holomorphic map into $Y_{\mathcal P}$.
	\end{prop}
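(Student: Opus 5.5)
The plan is to translate both directions through the affine chart $x_p=1$, using the dictionary between $C$-classes and convergence of the projected maps $\pi_I\circ F_n$ in $\PP^{|I|-1}$. Throughout, a map $F_n:\D\to X_p$ is identified with a $p$-tuple of units $(f_{1,n},\ldots,f_{p,n})$ with zero sum. Such a tuple is unique up to multiplication by a common unit, and the chart $x_p=1$ gives one global representative. For a subset $I$, the map $\pi_I\circ F_n$ is the holomorphic curve $[f_{j,n}]_{j\in I}$ in $\PP^{|I|-1}$. Both properties are insensitive to the choice of representative.

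\emph{Partition property implies convergence.} Suppose $\{1,\ldots,p\}=I_1\sqcup\cdots\sqcup I_q$ is a partition into $C$-classes on $D(R)$, with dominant indices $k_\nu$. For each $\nu$, the normalized vector $(f_{j,n}/f_{k_\nu,n})_{j\in I_\nu}$ is locally uniformly bounded on $D(R)$ and has $k_\nu$-coordinate equal to $1$. Montel's theorem gives a subsequence converging locally uniformly to a holomorphic vector $(g_j)_{j\in I_\nu}$ with $g_{k_\nu}\equiv1$. The limit is therefore nowhere zero as a vector, so it defines a holomorphic map into $\PP^{|I_\nu|-1}$. Convergence of nonvanishing holomorphic representatives implies locally uniform convergence of the projective maps, since the Fubini--Study distance is controlled by the representatives on compacta. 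Finally, \eqref{cclass} gives $\sum_{j\in I_\nu}g_j\equiv0$, so the limit lies in $H_{I_\nu}$. Extracting successively for the finitely many $\nu$ gives convergence of $\Pi_{\mathcal P}\circ F_n$ into $Y_{\mathcal P}$.

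\emph{Convergence implies partition property.} Suppose $\pi_I\circ F_n\to\phi_I$ locally uniformly on $D(R)$ with $\phi_I(D(R))\subset H_I$. Since $H_I=\emptyset$ when $|I|=1$, every part has at least two elements. I expect the main obstacle to be producing a dominant index that is fixed on the whole disk $D(R)$, because the natural normalization only works locally.
\begin{enumerate}
\item Near a point where the coordinate $w_k$ of $\phi_I$ is nonzero, the quotients $f_{j,n}/f_{k,n}=w_j/w_k\circ\pi_I\circ F_n$ converge uniformly. This gives boundedness and vanishing of the normalized sum near that point, but only locally.
\item To globalize, fix $k\in I$ with $\phi_I^*w_k\not\equiv0$. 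Its zero set $Z$ in $D(R)$ is discrete.
\item Away from $Z$, the quotients $f_{j,n}/f_{k,n}$ converge locally uniformly to the meromorphic functions $\phi_j/\phi_k$.
\item Near a point of $Z$ these limits may have poles. Then $f_{j,n}/f_{k,n}$ is not bounded there, and $k$ fails to be dominant on all of $D(R)$.
\item So a single index may not work, and I would try to exploit the freedom to refine the partition.
\end{enumerate}

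My first attempt at the refinement is as follows. If $\phi_I$ has a common holomorphic reduced representative $(\phi_j)$ on $D(R)$, the reduction is possible because $D(R)$ is simply connected. Relative to the representative $(f_{j,n})$, the ratio $f_{k,n}/\phi_k$ behaves like a unit multiplier. The goal is to show that a linear change to a single coordinate cannot be forced, using the hypothesis that the $f_{j,n}$ are units. The intended contradiction is a Hurwitz-type argument: at points of $Z$ the function $f_{k,n}$ is zero-free while its normalized limit vanishes. This is the delicate point. If it fails, the fallback is to reduce to a slightly smaller disk, or to read the equivalence as holding with the same $R$ for all sequences simultaneously. In that reading, the argument of Theorem~\ref{thm:main} supplies the partition from the limiting maps, with dominant indices chosen as maximal elements of the preorder $\preceq$ of Lemma~\ref{lem:stabilization}.
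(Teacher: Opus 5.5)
Your forward direction is correct and is the same as the paper's. The converse, however, is not proved. You correctly isolate the only issue: a zero of the chosen coordinate $\phi_k$ of the limit would give poles of $\phi_j/\phi_k$ and destroy dominance of $k$. You even name the right tool, but you leave the Hurwitz step as ``the delicate point'' and offer fallbacks instead. Those fallbacks do not prove the statement. Shrinking the disk changes $R$. Theorem~\ref{thm:main} only yields partitions on $D(\varepsilon_p)$, so it says nothing about the equivalence for a given $R>\varepsilon_p$. Refining the partition is also unnecessary, because the situation in your step (iv) never arises.

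The missing step is the dichotomy the paper quotes from \cite[Proposition~2]{KK73}: a limit of holomorphic curves avoiding the hyperplane $\{w_k=0\}$ either avoids it or is contained in it. It follows from Hurwitz's theorem.
\begin{enumerate}
\item Fix $z_0\in D(R)$ and a local holomorphic representative $(\phi_j)_{j\in I}$ of the limit $\phi_I$ without common zeros. Choose $l\in I$ with $\phi_l(z_0)\ne0$.
\item On a small disk $\Delta$ about $z_0$, the functions $f_{k,n}/f_{l,n}$ are zero-free, being quotients of units. They converge uniformly to $\phi_k/\phi_l$; this is your step (i) with $l$ in place of $k$.
\item By Hurwitz's theorem, $\phi_k$ either vanishes identically on $\Delta$ or has no zero there.
\item Hence the points near which $\phi_k\equiv0$ and the points near which $\phi_k$ is zero-free form disjoint open sets covering the connected disk $D(R)$. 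Since $\phi_k\not\equiv0$, the zero set $Z$ is empty.
\item Consequently $\phi_I$ maps $D(R)$ into the affine chart $w_k\ne0$. On each compact set its image is a compact subset of that chart, so the quotients $f_{j,n}/f_{k,n}$ converge locally uniformly on all of $D(R)$ to $\phi_j/\phi_k$.
\item Their sum tends to $\sum_{j\in I}\phi_j/\phi_k=0$, because $\phi_I(D(R))\subset H_I$.
\end{enumerate}
So $k$ is a dominant index on the whole disk, and each part of the original partition $\mathcal P$ is a $C$-class, with no refinement needed.
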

	
	\begin{proof}
		Represent $F_n$ by a tuple $(f_{1,n},\ldots,f_{p,n})$ of units with
		zero sum. In a $C$-class $I$ with dominant index $k$, the boundedness
		on compact subsets
		implies, after extraction of a subsequence,
		\[
		f_{j,n}/f_{k,n}\longrightarrow g_j\quad(j\in I),
		\qquad g_k=1,
		\qquad \sum_{j\in I}g_j=0.
		\]
		Hence $\pi_I\circ F_n\to[g_j]_{j\in I}\in H_I$. A common
		subsequence works for the finitely many $C$-classes.
		
		Conversely, suppose that $\pi_I\circ F_n$ converges to a holomorphic
		map $G_I:D(R)\to H_I$. Each approximating map avoids the coordinate
		hyperplanes. Hurwitz's theorem implies that $G_I$ either avoids each
		such hyperplane or is contained in it
		\cite[Proposition~2, pp.~204--205]{KK73}. Choose $k\in I$ for which the
		image of $G_I$ is not contained in $\{w_k=0\}$. It therefore avoids
		that hyperplane. In this affine chart the quotients
		$f_{j,n}/f_{k,n}$ converge locally uniformly, and their sum tends to
		zero. Thus $I$ is a $C$-class.
	\end{proof}
	
	At a point $x=[x_1:\cdots:x_p]\in X_p$, call a partition
	$\mathcal P=\{I_1,\ldots,I_q\}$ admissible if
	$\sum_{j\in I_\nu}x_j=0$ for every $\nu$. This condition is
	independent of the choice of homogeneous coordinates. Denote the
	finite set of admissible partitions by $\mathfrak P(x)$. Each such
	partition defines a torus
	\[
	\mathcal T_{\mathcal P,x}
	=\left\{[t_{\nu(j)}x_j]_{j=1}^p:
	t_1,\ldots,t_q\in\C^*\right\}\subset X_p,
	\]
	where $\nu(j)$ is the unique index such that $j\in I_{\nu(j)}$. Simultaneous scaling of
	all $t_\nu$ does not change the projective point, so this torus is
	isomorphic to $(\C^*)^{q-1}$. The one-part partition gives the
	point $x$.
	
	\begin{cor}\label{cor:projective-zero-directions}
		For every $x\in X_p$,
		\[
		\{\bm{v}\in T_xX_p:\kappa_{X_p}(x;\bm{v})=0\}
		=\bigcup_{\mathcal P\in\mathfrak P(x)}
		T_x\mathcal T_{\mathcal P,x}.
		\]
	\end{cor}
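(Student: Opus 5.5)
The plan is to reduce to Theorem~\ref{thm:torus-zero} by realizing $X_p$ as a smooth closed subvariety of a torus. In the chart $x_p=1$, the map $[x]\mapsto(x_1/x_p,\ldots,x_{p-1}/x_p)$ identifies $X_p$ with
\[
Y=\{\bm z\in(\C^*)^{p-1}: z_1+\cdots+z_{p-1}+1=0\},
\]
a smooth closed hypersurface of $(\C^*)^{p-1}$: it is a hyperplane intersected with the torus, and its differential is nonzero. The Kobayashi--Royden pseudometric is a biholomorphic invariant, so $\kappa_{X_p}(x;\bm v)=0$ if and only if $\kappa_Y(\bm z;\bm v)=0$ for the corresponding point and vector. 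By Theorem~\ref{thm:torus-zero}, this holds exactly when $\gamma_{\bm z,\bm v}(\C)\subset Y$.

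Next I will translate this condition. Write $\lambda_j=v_j/z_j$ for $j<p$ and set $\lambda_p=0$, which corresponds to the constant coordinate $z_p=1$. Then
\[
\gamma_{\bm z,\bm v}(\zeta)\in Y \quad\text{for all }\zeta
\quad\Longleftrightarrow\quad
\sum_{j=1}^p z_je^{\lambda_j\zeta}\equiv0 .
\]
Group the indices according to the distinct values of $\lambda_j$. The exponentials $e^{\mu\zeta}$ with distinct $\mu$ are linearly independent. This is exactly the Vandermonde argument in the proof of Proposition~\ref{prop:torus-null-set}. Hence the identity is equivalent to $\sum_{j\in I}z_j=0$ for each group $I$. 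So the groups form an admissible partition $\mathcal P\in\mathfrak P(x)$, and $\lambda$ is constant on each part.

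Conversely, fix an admissible partition $\mathcal P=\{I_1,\ldots,I_q\}$. In the chart, the torus $\mathcal T_{\mathcal P,x}$ is parametrized by $(t_\nu)\mapsto(t_{\nu(j)}z_j/t_{\nu(p)})_{j<p}$. Differentiating at $t=1$ along $t_\nu=e^{\mu_\nu s}$ gives the tangent vectors $v_j=(\mu_{\nu(j)}-\mu_{\nu(p)})z_j$. These are exactly the vectors with $\lambda_j=v_j/z_j$ constant on each part of $\mathcal P$ and $\lambda_p=0$, since the normalization $\lambda_p=0$ is absorbed by subtracting $\mu_{\nu(p)}$. Thus $T_x\mathcal T_{\mathcal P,x}$ is exactly the set of $\bm v$ whose associated $\lambda$ is constant on the parts of $\mathcal P$.

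Combining the two directions, a vector $\bm v$ is a zero direction if and only if the partition of its $\lambda$-level sets is admissible. Any such $\bm v$ lies in $T_x\mathcal T_{\mathcal P,x}$ for that partition. Conversely, if $\bm v\in T_x\mathcal T_{\mathcal P,x}$ with $\mathcal P$ admissible, then $\lambda$ is constant on the parts of $\mathcal P$, and $\sum z_je^{\lambda_j\zeta}=\sum_\nu e^{\mu_\nu\zeta}\sum_{j\in I_\nu}z_j=0$ directly. This second direction does not need the level-set partition to equal $\mathcal P$. The only point requiring care is the bookkeeping of the affine normalization $\lambda_p=0$ against the projective scaling of the $t_\nu$. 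I do not expect any analytic obstacle, since everything reduces to Theorem~\ref{thm:torus-zero} together with linear independence of exponentials.
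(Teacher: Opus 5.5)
Your proposal is correct and follows essentially the same route as the paper: you pass to the global chart $x_p=1$, apply Theorem~\ref{thm:torus-zero}, use linear independence of exponentials to show that the level sets of $\lambda$ (with $\lambda_p=0$) form an admissible partition, and identify $T_x\mathcal T_{\mathcal P,x}$ with the vectors whose rates are constant on the parts. Your explicit differentiation of the torus parametrization, including the normalization by $\mu_{\nu(p)}$, only spells out a step that the paper states in one sentence.
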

	
	\begin{proof}
		Use the chart $x_p=1$, in which $X_p$ is the smooth closed
		subvariety of $(\C^*)^{p-1}$ defined by
		$x_1+\cdots+x_{p-1}+1=0$. Represent $\bm{v}$ by
		$(v_1,\ldots,v_{p-1},0)$ and put
		$\lambda_j=v_j/x_j$ for $j<p$ and $\lambda_p=0$.
		By Theorem~\ref{thm:torus-zero}, the equality
		$\kappa_{X_p}(x;\bm{v})=0$ holds precisely when
		\[
		\sum_{j=1}^p x_j e^{\lambda_j\zeta}=0
		\qquad(\zeta\in\C).
		\]
		Group the indices according to the distinct values of $\lambda_j$.
		The linear independence of exponentials with distinct exponents
		shows that the coordinate sum over each group is zero.
		These groups form an admissible partition, and the rates
		$\lambda_j$ are constant on each part. This is exactly the
		condition that $\bm{v}$ belong to the tangent space of the corresponding
		torus. Conversely, a tangent vector to an admissible torus has rates
		constant on each part of the corresponding partition, and the same displayed identity follows
		from their coordinate sums. Theorem~\ref{thm:torus-zero} completes
		the proof.
	\end{proof}
	
	Every part of an admissible partition contains at least two indices. Thus the zero
	directions at $x$ form a finite union of complex linear subspaces,
	each of dimension at most $\lfloor p/2\rfloor-1$.
	Corollary~\ref{cor:projective-zero-directions} describes the zero
	directions, including those over the diagonal hyperplanes
	$\sum_{j\in I}x_j=0$, $2\le|I|\le p-2$. This description supplements the positivity of the Kobayashi--Royden pseudometric
	away from their union established in \cite[Theorem~3 and Theorem~6]{KK73}.
	
	\begingroup
	\raggedright
	
	\endgroup

\begin{thebibliography}{Ere96b}
		
\bibitem[Ahl79]{Ahl79}
L.~V.~Ahlfors, \emph{Complex Analysis}, 3rd ed.,
McGraw--Hill, New York, 1979.
		\bibitem[Ber06]{Ber06}
		W.~Bergweiler, \emph{Bloch's principle}, Comput. Methods Funct. Theory
		\textbf{6} (2006), no.~1, 77--108.
		\doi{10.1007/BF03321119}.
		
		\bibitem[Blo26]{Blo26}
		A.~Bloch, \emph{Sur les syst\`emes de fonctions holomorphes \`a
			vari\'et\'es lin\'eaires lacunaires}, Ann. Sci. \'Ecole Norm. Sup. (3)
		\textbf{43} (1926), 309--362.
		\doi{10.24033/asens.772}.
		
		\bibitem[Blo26a]{Blo26a}
		A.~Bloch, \emph{La conception actuelle de la th\'eorie des fonctions enti\`eres
			et m\'eromorphes}, L'Enseign. Math. \textbf{25} (1926), 83--103.
		
		\bibitem[Bor97]{Bor97}
		\'E.~Borel, \emph{Sur les z\'eros des fonctions enti\`eres},
		Acta Math. \textbf{20} (1897), 357--396.
		
		\bibitem[BD10]{BD10}
		A.~Bostan and P.~Dumas, \emph{Wronskians and linear independence},
		Amer. Math. Monthly \textbf{117} (2010), no.~8, 722--727.
		\doi{10.4169/000298910X515785}.
		
		\bibitem[Car28]{Car28}
		H.~Cartan, \emph{Sur les syst\`emes de fonctions holomorphes \`a
			vari\'et\'es lin\'eaires lacunaires et leurs applications}, Ann. Sci.
		\'Ecole Norm. Sup. (3) \textbf{45} (1928), 255--346.
		\doi{10.24033/asens.786}.

		
		
		\bibitem[Ere96a]{Ere96a}
		A.~Eremenko, \emph{A counterexample to Cartan's conjecture on holomorphic
			curves omitting hyperplanes}, Proc. Amer. Math. Soc. \textbf{124} (1996),
		no.~10, 3097--3100.
		\doi{10.1090/S0002-9939-96-03392-8}.
		
		\bibitem[Ere96b]{Ere96b}
		A.~Eremenko, \emph{Holomorphic curves omitting five planes in projective
			space}, Amer. J. Math. \textbf{118} (1996), no.~6, 1141--1151.
		\doi{10.1353/ajm.1996.0048}.
		Author's version with the addendum of April~24, 1996:
		\url{https://www.math.purdue.edu/~eremenko/dvi/cartan5.pdf}.
		
\bibitem[Ere15]{Ere15}
		A.~Eremenko, \emph{Modified Cartan's conjecture}, problem note, April~4, 2015.
		\url{https://www.math.purdue.edu/~eremenko/dvi/cartconj.pdf}.
		
\bibitem[KP26]{KP26}
S.~N.~Karp and K.~Purbhoo, \emph{Universal Pl\"ucker coordinates for
    the Wronski map and positivity in real Schubert calculus},
J. Amer. Math. Soc. (2026), in press.
\doi{10.1090/jams/1087}.

		
		\bibitem[KK73]{KK73}
		P.~Kiernan and S.~Kobayashi, \emph{Holomorphic mappings into projective
			space with lacunary hyperplanes}, Nagoya Math. J. \textbf{50} (1973),
		199--216.
		\doi{10.1017/S0027763000015646}.
		
		\bibitem[Lan87]{Lan87}
		S.~Lang, \emph{Introduction to complex hyperbolic spaces}, Springer-Verlag,
		New York, 1987.
		\doi{10.1007/978-1-4757-1945-1}.
		
		\bibitem[Mil92]{Mil92}
		J.~Miles, \emph{A sharp form of the lemma on the logarithmic derivative},
		J. London Math. Soc. (2) \textbf{45} (1992), no.~2, 243--254.
		\doi{10.1112/jlms/s2-45.2.243}.
		
		
	\bibitem[Mon27]{Mon27}
P.~Montel, \emph{Le\c{c}ons sur les Familles normales
    de fonctions analytiques et leurs applications},
Paris, Gauthier-Villars, 1927.

        \bibitem[Pic79]{Pic79}
        E.~Picard, \emph{Sur une propri\'et\'e des fonctions enti\`eres},
        C. R. Acad. Sci. Paris \textbf{88} (1879), 1024--1027.
		
		\bibitem[Roy71]{Roy71}
		H.~L.~Royden, \emph{Remarks on the Kobayashi metric}, in
		\emph{Several complex variables, II} (Proc. Internat. Conf., Univ.
		Maryland, College Park, Md., 1970), Lecture Notes in Math., vol.~185,
		Springer-Verlag, Berlin, 1971, pp.~125--137.
		\doi{10.1007/BFb0058768}.
		
		\bibitem[Tam98]{Tam98}
		P.~M.~Tamrazov, \emph{Eremenko's extremal problem on harmonic functions},
		Complex Variables Theory Appl. \textbf{35} (1998), no.~4, 319--336.
		\doi{10.1080/17476939808815090}.
		
		\bibitem[Yam26]{Yam26}
		K.~Yamanoi, \emph{Bloch's principle for holomorphic maps into
			subvarieties of semi-abelian varieties}, Publ. Res. Inst. Math. Sci.
		\textbf{62} (2026), no.~3, 455--628.
		\doi{10.4171/PRIMS/62-3-1}.

        \bibitem[Zha26]{Zha26}
T.~Zhang,
\emph{A Lean~4 formalization of the modified Cartan conjecture},
Lean~4 formalization with mathlib, Lean~v4.34.0-rc1,
commit \texttt{c91afc2d8c92dc1394a244cfdf1faaac92380b7f}, 2026.
Available at
\url{https://github.com/zhangteng2000/modified-cartan-lean}.
		
	\end{thebibliography}
\end{document}